\newtheorem{definition}{Definition}
\newtheorem{lemma}{Lemma}
\newtheorem{corollary}{Corollary}
\newtheorem{theorem}{Theorem}
\newtheorem*{conjecture*}{Conjecture}
\newcommand{\bx}{{\mathbf x}}
\newcommand{\bw}{{\mathbf w}}
\newcommand{\bq}{{\mathbf  q}}
\newcommand{\bu}{{\mathbf u}}
\newcommand{\be}{{\mathbf e}}
\newcommand{\bs}{{\mathbf{s}}}
\newcommand{\bt}{{\mathbf{t}}}
\newcommand{\bi}{{\mathbf{i}}}
\newcommand{\balpha}{\boldsymbol{\alpha}}
\newcommand{\bpsi}{\boldsymbol{\psi}}
\newcommand{\bbeta}{\boldsymbol{\eta}}
\newcommand{\bE}{\mathbb{E}}
\newcommand{\reals}{\mathbb{R}}
\newcommand{\bN}{\mathbb{N}}
\newcommand{\sgn}{{\mathrm{sgn}}}
\newcommand{\spn}[1]{\mathrm{span}\left\{ #1\right\}}
\newcommand{\inner}[1]{\langle #1 \rangle}
\newcommand{\eqdef}{\stackrel{\vartriangle}{=}}
\newcommand{\cA}{\mathcal{A}}
\newcommand{\cD}{\mathcal{D}}
\newcommand{\cE}{\mathcal{E}}
\newcommand{\cF}{\mathcal{F}}
\newcommand{\cH}{\mathcal{H}}
\newcommand{\cI}{\mathcal{I}}
\newcommand{\cJ}{\mathcal{J}}
\newcommand{\cM}{\mathcal{M}}
\newcommand{\cN}{\mathcal{N}}
\newcommand{\cO}{\mathcal{O}}
\newcommand{\cP}{\mathcal{P}}
\newcommand{\cQ}{\mathcal{Q}}
\newcommand{\cS}{\mathcal{S}}
\newcommand{\cU}{\mathcal{U}}
\newcommand{\cX}{\mathcal{X}}
\newcommand{\bones}{\mathbbm{1}}
\renewenvironment{proof}{\par\noindent{\bf Proof\ }}{\hfill\BlackBox\\[2mm]}
\newcommand{\BlackBox}{\rule{1.5ex}{1.5ex}}
\newcommand{\RNum}[1]{\uppercase\expandafter{\romannumeral #1\relax}}
\def\moverlay{\mathpalette\mov@rlay}
\def\mov@rlay#1#2{\leavevmode\vtop{%
   \baselineskip\z@skip \lineskiplimit-\maxdimen
   \ialign{\hfil$\m@th#1##$\hfil\cr#2\crcr}}}
\newcommand{\charfusion}[3][\mathord]{
    #1{\ifx#1\mathop\vphantom{#2}\fi
        \mathpalette\mov@rlay{#2\cr#3}
      }
    \ifx#1\mathop\expandafter\displaylimits\fi}
\DeclareMathOperator*{\argmin}{argmin} 
\renewcommand{\eqref}[1]{Equation~(\ref{#1})}
\newcommand{\ineqref}[1]{Inequality~(\ref{#1})}
\newcommand{\figref}[1]{Figure~\ref{#1}}
\newcommand{\thmref}[1]{Theorem~\ref{#1}}
\newcommand{\lemref}[1]{Lemma~\ref{#1}}
\newcommand{\defref}[1]{Definition~\ref{#1}}
\newcommand{\corref}[1]{Corollary~\ref{#1}}
\newcommand{\coursename}{(67577) INTRODUCTION TO MACHINE LEARNING}
\newcommand{\handout}[5]{
   \renewcommand{\thepage}{#1-\arabic{page}}
   \noindent
   \begin{center}
   \framebox{
      \vbox{
    \hbox to 5.78in { {\bf \coursename}
         \hfill #2 }
       \vspace{4mm}
       \hbox to 5.78in { {\Large \hfill #5  \hfill} }
       \vspace{2mm}
       \hbox to 5.78in { {\it #3 \hfill #4} }
      }
   }
   \end{center}
   \vspace*{4mm}
}
\newcommand{\norm}[1]{\left\Vert#1\right\Vert}
\newcommand{\circpar}[1]{\left( #1 \right)}
\newcommand{\dom}{\text{dom}}
\newcommand{\mymat}[1]{\circpar{\begin{array}{cccccccccccccccccc} #1 \end{array}}}
\newcommand{\bigO}[1]{\mathcal{O}{\left(#1\right)}}
\title{Dimension-Free Iteration Complexity of Finite Sum Optimization Problems}
\author{
Yossi Arjevani \\
Weizmann Institute of Science\\
Rehovot 7610001, Israel \\
\texttt{yossi.arjevani@weizmann.ac.il} \\
\and
Ohad Shamir\\
Weizmann Institute of Science\\
Rehovot 7610001, Israel \\
\texttt{ohad.shamir@weizmann.ac.il} \\
}	
\date{}
\begin{document}

\maketitle

\begin{abstract}
Many canonical machine learning problems boil down to a convex optimization problem with a finite sum structure. However, whereas much progress has been made in developing faster algorithms for this setting, the inherent limitations of these problems are not satisfactorily addressed by existing lower bounds. Indeed, current bounds focus on first-order optimization algorithms, and only apply in the often unrealistic regime where the number of iterations is less than $\cO(d/n)$ (where $d$ is the dimension and $n$ is the number of samples). In this work, we extend the framework of Arjevani et al. \cite{arjevani2015lower,arjevani2016iteration} to provide new lower bounds, which are dimension-free, and go beyond the assumptions of current bounds, thereby covering standard finite sum optimization methods, e.g., SAG, SAGA, SVRG, SDCA without duality, as well as stochastic coordinate-descent methods, such as SDCA and accelerated proximal SDCA.

\end{abstract}
\section{Introduction}

Many machine learning tasks reduce to \emph{Finite Sum Minimization (FSM)} problems of the form 
\begin{align} \label{opt:finite_sum}
	\min_{\bx\in\reals^d} F(\bw) \coloneqq \frac{1}{n}\sum_{i=1}^n f_i(\bw),
\end{align} 
where $f_i$ are $L$-smooth and $\mu$-strongly convex. In recent years, a major breakthrough was made when a linear convergence rate was established for this setting (SAG \cite{roux2012stochastic} and SDCA \cite{shalev2013stochastic}), and since then, many methods have been developed to achieve better convergence rate. However, whereas a large body of literature is devoted for upper bounds, the optimal convergence rate with respect to the problem parameters is not quite settled.

Let us discuss existing lower bounds for this setting, along with their shortcomings, in detail. One approach to obtain lower bounds for this setting is to consider the average of carefully handcrafted functions defined on $n$ disjoint sets of variables. This approach was taken by Agarwal and Bottou \cite{agarwal2014lower} 
who derived a lower bound for FSM under the first-order oracle model (see Nemirovsky and Yudin \cite{nemirovskyproblem}). In this model, optimization algorithms are assumed to access a given function by issuing queries to an external first-order oracle procedure. Upon receiving a query point in the problem domain, the oracle reports the corresponding function value and gradient. The construction used by Agarwal and Bottou consisted of $n$ different quadratic functions which are adversarially determined based on the first-order queries being issued during the optimization process. The resulting bound in this case does not apply to stochastic algorithms, rendering it invalid for current state-of-the-art methods. Another instantiation of this approach was made by Lan \cite{lan2015optimal} who considered $n$ disjoint copies of a quadratic function proposed by Nesterov in \cite[Section 2.1.2]{nesterov2004introductory}. This technique is based on the assumption that any iterate generated by the optimization algorithm lies in the span of previously acquired gradients. This assumption is rather permissive and is satisfied by many first-order algorithms, e.g., SAG and SAGA \cite{defazio2014saga}. However, the lower bound stated in the paper faces limitations in a few aspects. First, the validity of the derived bound is restricted to $d/n$ iterations. In many datasets, even if $d,n$ are very large, $d/n$ is quite small. Accordingly, the admissible regime of the lower bound is often not very interesting. Secondly, it is not clear how the proposed construction can be expressed as a Regularized Loss Minimization (RLM) problem with linear predictors (see Section \ref{section:lb_dual_rlm}). This suggests that methods specialized in dual RLM problems, such as SDCA and accelerated proximal SDCA \cite{shalev2016accelerated}, can not be addressed by this bound. Thirdly, at least the formal theorem requires assumptions (such as querying in the span of previous gradients, or sampling from a fixed distribution over the individual functions), which are not met by some state-of-the-art methods, such as coordinate descent methods, SVRG \cite{johnson2013accelerating} and without-replacements sampling algorithms \cite{recht2012beneath}.

Another relevant approach in this setting is to model the functional form of the update rules. This approach was taken by Arjevani et al. \cite{arjevani2015lower} where new iterates are assumed to be generated by a recurrent application of some fixed linear transformation. Although this method applies to SDCA and produces a tight lower bound of $\tilde\Omega((n+1/\lambda)\ln(1/\epsilon))$, its scope is rather limited. In recent work, Arjevani and Shamir \cite{arjevani2016iteration} considerably generalized parts of this framework by introducing the class of first-order \emph{oblivious} optimization algorithms, whose step sizes are scheduled regardless of the function under consideration, and deriving tight lower bounds for general smooth convex minimization problems (note that obliviousness rules out, e.g., quasi-Newton methods where gradients obtained at each iteration are multiplied by matrices which strictly depend on the function at hand, see \defref{definition:pcli} below).

In this work, building upon the framework of oblivious algorithms, we take a somewhat more abstract point of view which allows us to easily incorporate coordinate-descent methods, as well as stochastic algorithms. Our framework subsumes the vast majority of optimization methods for machine learning problems, in particular, it applies to SDCA, accelerated proximal SDCA, SDCA without duality \cite{shalev2015sdca}, SAG, SAGA, SVRG and acceleration schemes \cite{frostig2015regularizing,lin2015universal}), as well as	for a large number of methods for smooth convex optimization (i.e., FSM with $n=1$), e.g., (stochastic) Gradient descent (GD), Accelerated Gradient Descent (AGD, \cite{nesterov2004introductory}), the Heavy-Ball method (HB, \cite{polyak1987introduction}) and stochastic coordinate descent.

Under this structural assumption, we derive lower bounds for FSM (\ref{opt:finite_sum}), according to which the iteration complexity, i.e., the number of iterations required to obtain an $\epsilon$-optimal solution in terms of function value,  
is at least\footnote{Following standard conventions, here tilde notation hides logarithmic factors in the parameters of a given class of optimization problems, e.g., smoothness parameter and number of components.} 
\begin{align} \label{FSM_RLM_lower_bound}
	\tilde\Omega(n+\sqrt{n(\kappa-1)}\ln(1/\epsilon)),
\end{align}
where $\kappa$  denotes the condition number of $F(\bw)$ (that is, the smoothness parameter over the strong convexity parameter). To the best of our knowledge, this is the first tight lower bound to address \emph{all} the algorithms mentioned above. Moreover, our bound is dimension-free and thus apply to settings in machine learning which are not covered in the current literature (e.g., when $n$ is $\Omega(d)$). We also derive a dimension-free nearly-optimal lower bound for smooth convex optimization of 
$$\Omega\circpar{\circpar{L(\delta-2)/\epsilon}^{1/\delta}},~\delta\in(2,4),$$ 
which holds for any oblivious stochastic first-order algorithm. 
 It should be noted that 
our lower bounds remain valid under any source of randomness which may be introduced into the optimization process (by the oracle or by the optimization algorithm). In particular, our bounds hold in cases where the variance of the iterates produced by the algorithm converges to zero, a highly desirable property of optimization algorithms in this setting.

Two implications can be readily derived from this lower bound. First, 
obliviousness forms a real barrier for optimization algorithms, and whereas non-oblivious algorithms may achieve a super-linear convergence rate at latter stages of the optimization process (e.g., quasi-newton), or practically zero error after $\Theta(d)$ iterations (e.g. Center of Gravity method, MCG), oblivious algorithms are bound to linear convergence indefinitely, as demonstrated by \figref{figure:AGD_LBFGS}. We believe that this indicates that a major progress can be made in solving machine learning problems by employing non-oblivious methods for settings where $d \ll n$. It should be further noted that another major advantage of non-oblivious algorithms is their ability to obtain optimal convergence rates without an explicit specification of the problem parameters (e.g., \cite[Section 4.1]{arjevani2016iteration}).
\begin{figure}[H] \label{figure:AGD_LBFGS}
  \centering
     \includegraphics[scale=0.4,trim= 0 195 0 210,clip]{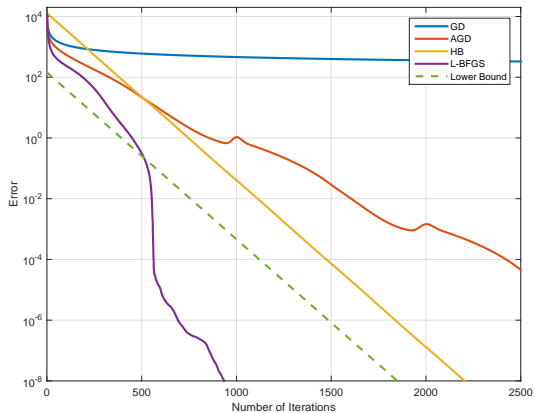}
  \caption{Comparison of first-order methods based on the function used by Nesterov in \cite[Section 2.1.2]{nesterov2004introductory} over $\reals^{500}$. Whereas L-BFGS (with a memory size of 100) achieves a super-linear convergence rate after $\Theta(d)$ iterations, the convergence rate of GD, AGD and HB remains linear as predicted by our bound.}
\end{figure}

Secondly, many practitioners have noticed that oftentimes sampling the individual functions without replacement at each iteration performs better than sampling with replacement (e.g., \cite{shalev2013stochastic,recht2012beneath}, see also  \cite{gurbuzbalaban2015random,
shamir2016without}). The fact that our lower bound holds regardless of how the individual functions are sampled and is attained using with-replacement sampling (e.g., accelerated proximal SDCA), implies that, in terms of iteration complexity, one should expect to gain no more than log factors in the problem parameters when using one method over the other (it is noteworthy that	 when comparing with and without replacement~samplings, apart from iteration complexity, other computational resources, such as limited communication in distributed settings \cite{arjevani2015communication}, may significantly affect the overall runtime). 

																				\section{Framework}

														\subsection{Motivation} \label{section:background}

Due to difficulties which arise when studying the complexity of general optimization problems under discrete computational models, it is common to analyze the computational hardness of optimization algorithms by modeling the way a given algorithm interacts with the problem instances (without limiting its computational resources). In the seminal work of Nemirovsky and Yudin \cite{nemirovskyproblem}, it is shown that algorithms which access the function at hand exclusively by querying a first-order oracle require at least 
\begin{align} \label{ineq:sqrtlb_lb}
	&\tilde{\Omega}\circpar{\min\left\{d, \sqrt{\kappa}\right\} \ln(1/\epsilon)},&\mu>0\\
	&\tilde{\Omega}{(\min\{d\ln(1/\epsilon),\sqrt{L/\epsilon}\})},&\mu=0\nonumber
\end{align}
oracle calls to obtain an $\epsilon$-optimal solution (note that, here and throughout this section we refer to FSM problems with $n=1$). This lower bound is tight and its dimension-free part is attained by Nesterov's well-known accelerated gradient descent, and by MCG otherwise. The fact that this approach is based on information considerations alone is very appealing and renders it valid for any first-order algorithm. However, discarding the resources needed for executing a given algorithm, in particular the per-iteration cost (in time and space), the complexity boundaries drawn by this approach are too crude from a computational point of view. Indeed, the per-iteration cost of MCG, the only method known with oracle complexity of $\bigO{d\ln(1/\epsilon)}$, is excessively high, rendering it prohibitive for high-dimensional~problems. 

We are thus led into the question of how well can a given optimization algorithm perform assuming that its per-iteration cost is constrained? Arjevani et al. \cite{arjevani2015lower,arjevani2016iteration} adopted a more structural approach where instead of modeling how information regarding the function at hand is being collected, one models the update rules according to which iterates are being generated. Concretely, they proposed the framework of $p$-CLI optimization algorithms where, roughly speaking, new iterates are assumed to form linear combinations of the previous $p$ iterates and gradients, and the coefficients of these linear combinations are assumed to be either stationary (i.e., remain fixed throughout the optimization process) or oblivious. Based on this structural assumption, they showed that the iteration complexity of minimizing smooth and strongly convex functions is $\tilde{\Omega}(\sqrt{\kappa}\ln(1/\epsilon))$. The fact that this lower bound is stronger than (\ref{ineq:sqrtlb_lb}), 
in the sense that it does not depend on the dimension, confirms that controlling the functional form of the update rules allows one to derive tighter lower bounds. The framework of $p$-CLIs forms the nucleus of our formulation below.

															\subsection{Definitions} \label{section:framework}



When considering lower bounds one must be very precise as to the scope of optimization algorithms to which they apply. Below, we give formal definitions for oblivious stochastic CLI optimization algorithms and iteration complexity (which serves as a crude proxy for their computational~complexity).

\begin{definition}[Class of Optimization Problems] \label{definition:side_information}
A class of optimization problems is an ordered triple $(\cF,\cI,\cO)$, where $\cF$ is a family of functions defined over some linear space designated by $\dom\cF$, $\cI$ is the side-information given prior to the optimization process and $\cO_f:\dom\cF\times\Theta\to\dom\cF$ is a suitable oracle parametrized by some parameters set $\Theta$, i.e., an external procedure which upon receiving $\bx\in\dom\cF$ and $\theta\in\Theta$, returns some $\cO_f(\theta)\in\dom(\cF)$.
\end{definition}
For example, in FSM, $\cF$ contains functions as defined in (\ref{opt:finite_sum}), the side-information contains the smooth parameter $L$, the strong convexity parameter $\mu$ and the number of components $n$ (although it carries a crucial effect on the iteration complexity, e.g., \cite{arjevani2016iteration}, in this work, we shall ignore the side-information and assume that all the parameters of the class are given). We shall assume that both first-order and coordinate-descent oracles (see \ref{oracle:first_order},\ref{oracle:fsm_cd} below) are allowed to be used during the optimization process. Formally, this is done by introducing an additional parameter which indicates which oracle is being addressed. This added degree of freedom does not violate our lower bounds.

We now turn to rigorously define CLI optimization algorithms. Note that, compared with the definition of first-order $p$-CLIs provided in \cite{arjevani2016iteration}, here, in order to handle coordinate-descent and first-order oracles in a unified manner, we base our formulation on general oracle procedures.  

\begin{definition}[CLI]\label{definition:pcli} 
An optimization algorithm is called a Canonical Linear Iterative (CLI) optimization algorithm over a class of optimization problems $(\cF,\cI,\cO)$, if given an instance $f\in\cF$ and initialization points $\{\bw^{(0)}_{i}\}_{i\in\cJ}\subseteq\dom(\cF)$, where $\cJ$ is some index set, it operates by iteratively generating points such that for any $i\in\cJ$,
\begin{align} \label{assumption:dynamics}
	\bw^{(k+1)}_i \in \sum_{j\in\cJ}  \cO_f\circpar{\bw^{(k)}_j;\theta_{ij}^{(k)}}, \quad k=0,1,\dots 
\end{align} 
holds, where $\theta^{(k)}_{ij}\in\Theta$ are parameters chosen, stochastically or deterministically, by the algorithm, possibly depending on the side-information. If the parameters do not depend on previously acquired oracle answers, we say that the given algorithm is \emph{oblivious}. Lastly, algorithms with $|\cJ|\le p$, for some $p\in\bN$, are denoted by $p$-CLI.
\end{definition}	
Note that assigning different weights to different terms in (\ref{assumption:dynamics}) can be done through  $\theta^{(k)}_{ij}\in\Theta$ (e.g., oracle \ref{oracle:first_order} below). This allows a succinct definition for obliviosity. Lastly, we define \emph{iteration~complexity}.
\begin{definition}[Iteration Complexity]\label{definition:iteration_complexity}
The iteration complexity of a given CLI w.r.t. a given problem class $(\cF,\cI,\cO)$ is defined to be the minimal number of iterations $K$ such that 
\begin{align*}
	   \bE[f(\bw^{(k)}_1) - \min_{\bw\in\dom{\cF}} f(\bw)] <\epsilon,\quad \forall f\in\cF, k\ge K
\end{align*}
where the expectation is taken over all the randomness introduced into the optimization process (choosing $\bw_1^{(k)}$ merely serves as a convention and is not necessary for our bounds to hold). 
\end{definition}
%
%

						\subsection{Proof Technique - Deriving Lower Bounds via Approximation Theory} \label{section:approximation}

Consider the following parametrized class of $L$-smooth and $\mu$-strongly convex optimization problems,
\begin{align}\label{opt:approximation_toy_problem}
	\min_{x\in\reals} f_\eta(x)\coloneqq \frac{\eta w^2}{2}  - w,\quad \eta\in [\mu,L].
\end{align} 
Clearly, the minimizer of $f_\eta$ are $w^*(\eta)\coloneqq 1/\eta$, with norm bounded by $1/\mu$. For simplicity, we will consider a special case, namely, vanilla gradient descent (GD) with step size $1/L$, which produces new iterates as follows
\begin{align*}
	w^{(k+1)}(\eta)&=w^{(k)}(\eta) - \frac{1}{L} f'_\eta(w^{(k)}(\eta) )
	= \circpar{1- \frac\eta L}w^{(k)}(\eta)+\frac{1}{L}.
\end{align*}
Setting the initialization point to be $w^{(0)}(\eta)=0$, we derive an explicit expression for $w^{(k)}(\eta)$:
\begin{align}\label{eq:gd_iterations}
	w^{(k)}(\eta)
	&= \frac{1}{L}\sum_{i=0}^{k-1} (-1)^{i} \binom{k}{i+1} (\eta/L)^i.
\end{align}
\begin{figure}[H]
  \centering
     \includegraphics[scale=0.4,trim= 0 210 0 210,clip]{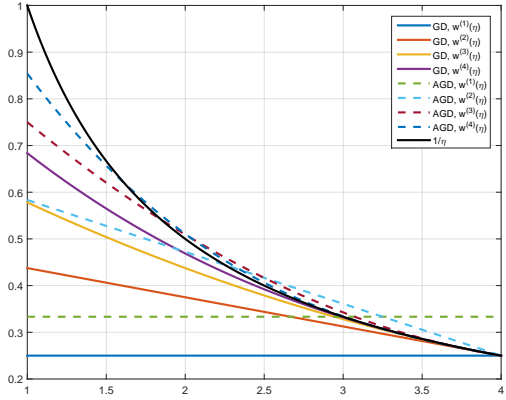} \label{figure:GD_AGD_approximation}
  \caption{The first four iterates of GD and AGD, which form polynomials in $\eta$, the parameter of problem (\ref{opt:approximation_toy_problem}), are compared to $1/\eta$ over $[1,4]$.}
\end{figure}

It turns our that each $w^{(k)}(\eta)$ forms a univariate polynomial whose degree is at most $k$. Furthermore,  since $f_\eta(w)$ are $L$-smooth $\mu$-strongly convex for any $\eta\in[\mu,L]$, standard convergence analysis for GD (e.g., \cite{nesterov2004introductory}, Theorem 2.1.14)  guarantees that $|w^{(k)}(\eta) - w^*(\eta)|\le (1-2/(1+\kappa))^{\frac{k}{2}} |w^*(\eta)|$, 
where $\kappa$  denotes the condition number. Substituting \eqref{eq:gd_iterations} for $w^{(k)}(\eta)$ yields
\begin{align*}
	\max_{\eta\in[\mu,L]}\left|\frac{1}{L}\sum_{i=0}^{k-1} (-1)^{i} \binom{k}{i+1} (\eta/L)^i - 1/\eta\right|
	&\le \frac{1}{\mu}\circpar{1-\frac{2}{1+\kappa}}^{\frac{k}{2}}.
\end{align*}
Thus, we see that the faster the convergence rate of a given optimization algorithm is, the better the induced sequence of polynomials $(w^{(k)}(\eta))_{k\ge0}$ approximate $1/\eta$ w.r.t. the maximum norm $\| \cdot\|_{L_{\infty}([\mu,L])}$ over $[\mu,L]$. In Fig. 2
, we compare the first 4 polynomials induced by GD and AGD. Not surprisingly, AGD polynomials approximates $1/\eta$ better than those of GD. 

Now, one may ask, assuming that iterates of a given optimization algorithm $\cA$ for (\ref{opt:approximation_toy_problem}) can be expressed as polynomials $s_k(\eta)$  whose degree does not exceed the iteration number, just how fast can these iterates converge to the minimizer? Since the convergence rate is bounded from below by $\| s_k(\eta)- 1/\eta\|_{L_{\infty}([\mu,L])}$, we may address the following question instead:
\begin{align} \label{opt:elementary_problem}
	\min_{s(\eta)\in\cP_k}  \| s(\eta)- 1/\eta\|_{L_{\infty}([\mu,L])},
\end{align}
where $\cP_k$ denotes the set of univariate polynomials whose degree does not exceed $k$. Problem (\ref{opt:elementary_problem}) and other related settings are main topics of study in approximation theory. Accordingly, our technique for proving lower bounds makes an extensive use of tools borrowed from this area. Specifically, in a paper from 1899 \cite{tchebyshev1899polynomials} Chebyshev showed that
\begin{align} \label{ineq:basic_chebyshev}
	 \min_{s(\eta)\in\cP_k}  \left\| s(\eta)- \frac{1}{\eta-c}\right\|_{L_{\infty}([-1,1])}\ge\frac{(c-\sqrt{c^2-1})^k}{c^2-1},\quad c>1,
\end{align}
by which we derive the following theorem (see Appendix \ref{proof:thm:lb_toy} for a detailed proof). 
\begin{theorem} \label{thm:lb_toy}
The number of iterations required by $\cA$ to get an $\epsilon$-optimal solution is $\tilde{\Omega}(\sqrt{\kappa}\ln(1/\epsilon))$.
\end{theorem}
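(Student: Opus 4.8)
The plan is to convert the convergence guarantee of $\cA$ on the family $\{f_\eta:\eta\in[\mu,L]\}$ into an upper bound on how well a degree-$k$ polynomial can approximate $1/\eta$ uniformly on $[\mu,L]$, and then to contradict that bound via Chebyshev's inequality \ineqref{ineq:basic_chebyshev}. First I would record that, since each $f_\eta$ is a one-dimensional quadratic, the suboptimality gap is \emph{exactly} $f_\eta(w)-f_\eta(w^*(\eta))=\tfrac{\eta}{2}\bigl(w-1/\eta\bigr)^2$. If $\cA$ is stochastic, its $k$-th iterate is a random polynomial $s_k(\eta)$ of degree at most $k$; writing $\bar s_k(\eta)\coloneqq\bE[s_k(\eta)]$, which is a deterministic polynomial of degree at most $k$, Jensen's inequality gives
\[
\bE\bigl[f_\eta(s_k(\eta))\bigr]-f_\eta(w^*(\eta))=\tfrac{\eta}{2}\,\bE\bigl[(s_k(\eta)-1/\eta)^2\bigr]\ \ge\ \tfrac{\mu}{2}\,\bigl(\bar s_k(\eta)-1/\eta\bigr)^2 .
\]
Hence, if $\cA$ reaches an $\epsilon$-optimal solution after $k$ iterations, then $\bigl(\bar s_k(\eta)-1/\eta\bigr)^2\le 2\epsilon/\mu$ for every $\eta\in[\mu,L]$, so
\[
\min_{s\in\cP_k}\ \norm{s(\eta)-1/\eta}_{L_\infty([\mu,L])}\ \le\ \sqrt{2\epsilon/\mu},
\]
which is precisely the quantity studied in problem \paref{opt:elementary_problem}.

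Next I would lower bound the left-hand side. The affine reparametrization $\eta=\tfrac{L-\mu}{2}(t-c)$ carries $[\mu,L]$ onto $[-1,1]$ and preserves polynomial degree; composing with $t\mapsto -t$ if necessary we may take $c=\tfrac{L+\mu}{L-\mu}=\tfrac{\kappa+1}{\kappa-1}>1$, and $1/\eta=\tfrac{2}{L-\mu}\cdot\tfrac{1}{t-c}$. Therefore \ineqref{ineq:basic_chebyshev} gives
\[
\min_{s\in\cP_k}\ \norm{s(\eta)-1/\eta}_{L_\infty([\mu,L])}=\frac{2}{L-\mu}\min_{\tilde s\in\cP_k}\ \norm{\tilde s(t)-\tfrac{1}{t-c}}_{L_\infty([-1,1])}\ \ge\ \frac{2}{L-\mu}\cdot\frac{\bigl(c-\sqrt{c^2-1}\bigr)^k}{c^2-1}.
\]
A short calculation gives $c-\sqrt{c^2-1}=\tfrac{\sqrt\kappa-1}{\sqrt\kappa+1}$ and $c^2-1=\tfrac{4\kappa}{(\kappa-1)^2}$, so, using $L-\mu=\mu(\kappa-1)$, the right-hand side equals $\tfrac{\kappa-1}{2\mu\kappa}\bigl(\tfrac{\sqrt\kappa-1}{\sqrt\kappa+1}\bigr)^k$.

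Finally, combining the two chains of inequalities, $\epsilon$-optimality after $k$ iterations forces $\bigl(\tfrac{\sqrt\kappa-1}{\sqrt\kappa+1}\bigr)^k\le \tfrac{2\mu\kappa}{\kappa-1}\sqrt{2\epsilon/\mu}$. Taking logarithms and using $\ln\tfrac{\sqrt\kappa+1}{\sqrt\kappa-1}=O(1/\sqrt\kappa)$ in the regime of large $\kappa$, this rearranges to $k=\tilde{\Omega}\bigl(\sqrt\kappa\,\ln(1/\epsilon)\bigr)$, the tilde absorbing the lower-order $\ln\kappa$ and $\ln\mu$ contributions from the $\tfrac{\kappa-1}{2\mu\kappa}$ and $\sqrt\mu$ prefactors.

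I do not expect a deep obstacle, since the construction and the two ingredients — the exact quadratic identity and Chebyshev's $1899$ estimate — are essentially off the shelf. The points that need care are: (i) using the identity in the correct direction, i.e.\ bounding $\eta\ge\mu$ rather than $\eta\le L$, together with the Jensen step that keeps $\bar s_k$ a degree-$k$ polynomial, so the bound survives \emph{any} randomness introduced by the oracle or the algorithm; and (ii) the sign of $c$ in the reparametrization, which is repaired by the reflection $t\mapsto -t$. The remaining work — simplifying $c-\sqrt{c^2-1}$ and estimating $\ln\tfrac{\sqrt\kappa+1}{\sqrt\kappa-1}$ — is routine and affects only constants and logarithmic factors.
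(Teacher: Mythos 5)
Your proposal is correct and follows essentially the same route as the paper: identify the $k$-th iterate with a degree-$k$ polynomial $s_k(\eta)$, lower-bound $\min_{s\in\cP_k}\|s(\eta)-1/\eta\|_{L_\infty([\mu,L])}$ via the affine change of variables and Chebyshev's estimate \paref{ineq:basic_chebyshev} (your constant $\tfrac{\kappa-1}{2\mu\kappa}=\tfrac{L-\mu}{2L\mu}$ matches \lemref{lem:max_norm_general}), convert to function values by strong convexity, and take logarithms using $\ln\tfrac{\sqrt\kappa+1}{\sqrt\kappa-1}=O(1/\sqrt\kappa)$ as in \lemref{lem:technical2tag}. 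The only departure is your Jensen step reducing the stochastic case to the deterministic polynomial $\bar s_k=\bE[s_k]$, a clean minor strengthening that the paper defers to Yao's principle in its later theorems.
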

In the following sections, we apply oblivious CLI on various parameterized optimization problems so that the resulting iterates are polynomials in the problem parameters. We then apply arguments similar to the above

A similar reduction, from optimization problems to approximation problems, was used before in a few contexts to analyze the iteration complexity of deterministic CLIs (e.g., \cite[Section 3]{arjevani2016iteration}, see also Conjugate Gradient convergence analysis \cite{polyak1987introduction}). But, what if we allow random algorithms? should we expect the same iteration complexity? To answer this, we use Yao's minimax principle according to which the performance of a given stochastic optimization algorithm w.r.t. to its worst input are bounded from below by the performance of the best deterministic algorithm w.r.t. distributions over the input space. Thus, following a similar reduction one can show that the convergence rate of stochastic algorithms is bounded from below by
\begin{align} \label{opt:elementary_problem_L_1}
	\min_{s(\eta)\in\cP_k}  \int_{\mu}^L |s(\eta)- 1/\eta| \frac{1}{L-\mu} d\eta.
\end{align}
That is, a lower bound for the stochastic case can be attained by considering an approximation problem w.r.t. weighted $L_1$ with the uniform distribution over $[\mu,L]$. Other approximation problems considered in this work involve $L_2$-norm and different distributions. We provide a schematic description of our proof technique in~Scheme~2.1.
\begin{table}[H]
\vskip 0.15in
\begin{center}
\begin{small}
\begin{sc}
\begin{tabular}{llccr}\label{table:lower_bound_scheme}
\small \textbf{Scheme 2.1} &\small From Optimization Problems to Approximation Problems \\
\hline
\scriptsize \textbf{Given} &\scriptsize a class of functions $\cF$, a suitable oracle $\cO$\\ 
&\scriptsize and a sequence of sets of function $\cS_k$ over some parameters set $H$.\\
\scriptsize \textbf{Choose} &\scriptsize a subset of functions $\{f_\eta\in\cF|\eta\in H\}$, s.t.  $\bw^k(\eta)\in\cS_k$.\\
\scriptsize \textbf{Compute} &\scriptsize the minimizer $\bw^*(\eta)$ for any $f_\eta$\\
\scriptsize \textbf{Bound} &\scriptsize from below the best approximation for $\bw^*(\eta)$ w.r.t. $\cS_k$ \\ &
\scriptsize  and a norm $\|\cdot\|$, i.e., $\min\{\|\bs(\eta)-\bw^*(\eta)\|~|~ \bs(\eta)\in\cS_k\}$ 
\end{tabular}
\end{sc}
\end{small}
\end{center}
\vskip -0.1in
\end{table}

															\section{Lower Bound for Finite Sums Minimization Methods}

Having described our analytic approach, we now turn to present some concrete applications, starting with iteration complexity lower bounds in the context of FSM problems (\ref{opt:finite_sum}). In what follows, we derive a lower bound on the iteration complexity of oblivious (possibly stochastic) CLI algorithms equipped with first-order and coordinate-descent oracles for FSM. Strictly speaking, we focus on optimization algorithms equipped with both generalized first order oracle,
\begin{align} \label{oracle:first_order}
	\cO(\bw;A,B,C,j) &= A\nabla f_j(\bw) + B\bw+C,\quad A,B,C\in\reals^{d\times d}, j\in[n], 
\end{align}
and steepest coordinate-descent oracle
\begin{align} \label{oracle:fsm_cd}
	&\cO(\bw;i,j) &= \bw + t^*\be_i,\quad  t^*\in	\argmin_{t\in\reals} f_j(w_1,\dots,w_{i-1},w_i+t,w_{i+1},\dots,w_d), j\in[n],
\end{align}
where $\be_i$ denotes the $i$'th unit vector. We remark that coordinate-descent steps w.r.t. partial gradients can be implemented using (\ref{oracle:first_order}) by setting $A$ to be some principal minor of the unit matrix.It should be further noted that our results below hold for scenarios where the optimization algorithm is free to call a different oracle at different iterations. 

First, we sketch the proof of the lower bound for deterministic oblivious CLIs. Following Scheme 2.1, we restrict our attention to a parameterized subset of problems. We assume\footnote{Clearly, in order to derive a lower bound for coordinate-descent algorithms, we must assume $d>1$. If only a first-order oracle is allowed, then the same lower bound as in \thmref{thm:finite_sum_lb} can be derived for $d=1$.} $d>1$ and denote by $\cH_{\text{FSM}}$ the set of all $(\eta_1,\dots,\eta_n)\in\reals^n$  such that all the entries equal $-(L-\mu)/2$, except for some $j\in[n]$, for which $\eta_j\in \left[-(L-\mu)/2,(L-\mu)/2\right] $. Now, given $\bbeta\coloneqq(\eta_1,\dots,\eta_n)\in \cH_{\text{FSM}} $ we define 
\begin{align}\label{problem:finite_sum}
	F_{\bbeta}(\bw)&\coloneqq \frac{1}{n}\sum_{i=1}^n \circpar{\frac{1}{2}\bw^\top Q_{\eta_i} \bw - \bq},\text{where}\\
		Q_{\eta_i}&\coloneqq\mymat{\frac{L+\mu}{2} & \eta_i\\ \eta_i&\frac{L+\mu}{2} \\ && \mu \\ &&& \ddots \\ &&&&\mu },
		~\bq\coloneqq\mymat{\frac{R\mu}{\sqrt2} \vspace{0.05cm}\\\frac{R\mu}{\sqrt2}\\0\\\vdots\\0}.\nonumber
\end{align}
It is easy to verify that the minimizers of (\ref{problem:finite_sum}) are
\begin{align}\label{eq:finite_sum_minimizers}
	\bw^*(\bbeta) 
	=\circpar{\frac{R\mu}{\sqrt2\circpar{\frac{L+\mu}{2}+\frac{1}{n}\sum_{i=1}^n \eta_i } } ,
	          \frac{R\mu}{\sqrt2\circpar{\frac{L+\mu}{2}+\frac{1}{n}\sum_{i=1}^n \eta_i } },
						0,\dots,0}^\top.
\end{align} 
We would like to show that the coordinates of the iterates of deterministic oblivious CLIs, which minimize $F_{\bbeta}$ using first-order and coordinate-descent oracles, form multivariate polynomials in $\bbeta$ of total degrees (the maximal sum of powers over all the terms) which does not exceed the iteration number. Indeed, if the coordinates of $\bw_i^{(k)}(\bbeta)$ are multivariate polynomial in $\bbeta$ of total degree at most $k$, then the coordinates of the vectors returned by both oracles
\begin{align} \label{eq:oracles_finite_sum}
&\text{First-order oracle:}~\cO(\bw_j^{(k)};A,B,C,j) = A(Q_{\eta_j} \bw_i^{(k)} - \bq) + B\bw_i^{(k)}+C,\\
&\text{Coordinate-descent oracle:}~\cO(\bw_j^{(k)};i,j) = 
\circpar{I- (1/(Q_{\eta_j})_{ii})\be_i(Q_{\eta_j})_{i,*} }\bw_i^{(k)} - q_i/(Q_{\eta_j})_{ii}\be_i,\nonumber
\end{align}
are multivariate polynomials of total degree of at most $k+1$, as all the parameters ($A,B,C,i$ and $j$) do not depend on $\bbeta$ (due to obliviosity) and the rest of the terms ($Q_{\eta_j},\bq,I,1/(Q_{\eta_j})_{ii},(Q_{\eta_j})_{i,*},\be_i$ and $q_i$) are either linear in $\eta_j$ or constants. Now, since the next iterates are generated simply by summing up all the oracle answers, they also form multivariate polynomials of total degree of at most $k+1$. Thus, denoting the first coordinate of $\bw_1^{(k)}(\bbeta)$ by $s(\bbeta)$ and using \ineqref{ineq:basic_chebyshev}, we get the following bound 
\begin{align}  
	\max_{\bbeta\in \cH_{\text{FSM}}}
	\|\bw_1^{(k)}(\bbeta)-\bw^*(\bbeta)\|
	&\ge 
	\left\|s(\bbeta) - \frac{R\mu}{\sqrt2\circpar{\frac{L+\mu}{2}+\frac{1}{n}\sum_{i=1}^n \eta_i } }  \right\|_{L^\infty([\mu,L])} \label{ineq:convergence_sketch}\\
	&\ge 
	\Omega(1)\circpar{\frac{  \sqrt{\frac{\kappa-1}{n} +1} -1 }{  \sqrt{\frac{\kappa-1}{n} +1} +1 } }^{k/n}\!\!, \label{ineq:convergence_sketch_lb}
\end{align}
where $\Omega(1)$ designates a constant which does not depend on $k$ (but may depend on the problem parameters). Lastly, this implies that for any deterministic oblivious CLI and any iteration number, there exists some $\bbeta\in\cH_{\text{FSM}}$ such that the convergence rate of the algorithm, when applied on  $F_{\bbeta}$, is bounded from below by \ineqref{ineq:convergence_sketch_lb}. We note that, as opposed to other related lower bounds, e.g., \cite{lan2015optimal}, our proof is non-constructive. As discussed in subsection \ref{section:approximation}, this type of analysis can be extended to stochastic algorithms by considering (\ref{ineq:convergence_sketch}) w.r.t. other norms such as weighted $L_1$-norm. We now arrive at the following theorem whose proof, including the corresponding logarithmic factors and constants, can be found in Appendix \ref{proof:thm:finite_sum_lb}.
\begin{theorem}\label{thm:finite_sum_lb}
The iteration complexity of oblivious (possibly stochastic) CLIs for FSM (\ref{opt:finite_sum}) equipped with first-order (\ref{oracle:first_order}) and coordinate-descent oracles (\ref{oracle:fsm_cd}), is bounded from below by 
\begin{align*}
\tilde\Omega(n+ \sqrt{n(\kappa-1)}\ln(1/\epsilon)).
\end{align*}
\end{theorem}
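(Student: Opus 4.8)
The plan is to carry out the two-part reduction sketched before the theorem statement, but with all constants and logarithmic factors made explicit, and to handle both the deterministic and the stochastic (via Yao) cases. For the deterministic part, I would first verify carefully the polynomial-degree claim: by induction on $k$, every coordinate of every iterate $\bw_i^{(k)}(\bbeta)$ is a multivariate polynomial in $\bbeta=(\eta_1,\dots,\eta_n)$ of total degree at most $k$. The base case is immediate since the initialization points do not depend on $\bbeta$. The inductive step is exactly the computation in \eqref{eq:oracles_finite_sum}: the first-order oracle multiplies the previous iterate by $A Q_{\eta_j}+B$ (entries affine in $\eta_j$) and adds the $\bbeta$-independent vector $-A\bq+C$, raising the degree by at most one; the coordinate-descent oracle, since $(Q_{\eta_j})_{ii}\in\{\tfrac{L+\mu}{2},\mu\}$ is a constant and $(Q_{\eta_j})_{i,*}$ is affine in $\eta_j$, likewise raises the degree by at most one; and forming $\bw_i^{(k+1)}$ as a finite sum of oracle outputs preserves the degree bound. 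Crucially, obliviousness is what guarantees the parameters $A,B,C,i,j$ — hence the linear maps applied — do not depend on $\bbeta$, which is precisely what keeps the degree growth linear rather than, say, quadratic.

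Next I would set up the approximation-theoretic lower bound. Restricting to the slice $\cH_{\text{FSM}}$ where only $\eta_j$ varies (all other coordinates fixed at $-(L-\mu)/2$), the first coordinate $s(\bbeta)$ of $\bw_1^{(k)}$ becomes a univariate polynomial in $\eta_j$ of degree at most $k$, while by \eqref{eq:finite_sum_minimizers} the target first coordinate is $\frac{R\mu/\sqrt2}{\frac{L+\mu}{2}+\frac{1}{n}\sum_i\eta_i}$, which as a function of $\eta_j$ is of the form $\frac{\text{const}}{\eta_j - c}$ after an affine change of variables mapping the admissible $\eta_j$-range onto $[-1,1]$. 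I would compute $c$ explicitly: since $\sum_{i\ne j}\eta_i = -(n-1)(L-\mu)/2$, the denominator is $\frac{L+\mu}{2} - \frac{(n-1)(L-\mu)}{2n} + \frac{\eta_j}{n}$, so rescaling $\eta_j$ to $[-1,1]$ yields $c = \big(\tfrac{\kappa-1}{n}+1\big)/\big(\tfrac{\kappa-1}{n}\big) \cdot (\text{clean form})$ — the point being that $c-\sqrt{c^2-1}$ works out to $\frac{\sqrt{(\kappa-1)/n+1}-1}{\sqrt{(\kappa-1)/n+1}+1}$, matching \eqref{ineq:convergence_sketch_lb}. Applying \ineqref{ineq:basic_chebyshev} with this $c$, and noting that the polynomial $s$ ranges over degree $\le k$ but our parametrization splits $k$ iterations across $n$ components (this is where the $k/n$ exponent comes from — effectively only $k/n$ "useful" degree increments target component $j$; more precisely one picks the worst $j$), gives a lower bound on $\max_{\bbeta}\|\bw_1^{(k)}(\bbeta)-\bw^*(\bbeta)\|$ of order $\big(c-\sqrt{c^2-1}\big)^{k/n}$ times an explicit constant. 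Converting a distance-to-minimizer bound into a function-value suboptimality bound costs only a factor of $\mu/2$ by strong convexity. Setting this $\ge\epsilon$ and solving for $k$ yields $k = \tilde\Omega\big(n \cdot \frac{\ln(1/\epsilon)}{-\ln(c-\sqrt{c^2-1})}\big)$, and since $-\ln(c-\sqrt{c^2-1}) = \Theta\big(\sqrt{(\kappa-1)/n}\big)$ for the relevant parameter range, this is $\tilde\Omega\big(n + \sqrt{n(\kappa-1)}\ln(1/\epsilon)\big)$; the additive $n$ term comes from the regime where $k < n$, in which the algorithm cannot even have queried all $n$ components, forcing a separate trivial argument.

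For the stochastic case, I would invoke Yao's minimax principle as described in Section~\ref{section:approximation}: the expected suboptimality of any oblivious stochastic CLI on its worst instance in $\cH_{\text{FSM}}$ is at least the best, over deterministic oblivious CLIs, of the expected suboptimality under the uniform distribution on the $\eta_j$-range. Since a deterministic oblivious CLI still produces a degree-$\le k$ polynomial $s(\eta_j)$, this reduces to the $L^1$ approximation problem \eqref{opt:elementary_problem_L_1} for $1/(\eta_j-c)$; I would either cite the known $L^1$ analogue of Chebyshev's bound \ineqref{ineq:basic_chebyshev} or derive it by a duality/extremal-signature argument, obtaining the same exponential rate up to constants. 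This is also where I would make sure the bound genuinely holds "regardless of the source of randomness" — the key observation being that the polynomial-degree bound is a deterministic structural fact about each realization, so the expectation only enters through the choice of instance.

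The main obstacle I anticipate is getting the $k/n$ exponent rigorously rather than heuristically: one must argue that spreading $k$ oracle calls over $n$ components means some single component $j$ receives a subpolynomial of degree effectively $k/n$ — but this is not literally true for an arbitrary oblivious schedule (the algorithm could pour all its queries into one component). The correct argument is subtler: it exploits that the $n-1$ fixed components contribute only $\bbeta$-independent structure, so the effective degree in the single free variable $\eta_j$ is at most $k$ — and then the lower bound $\big(c-\sqrt{c^2-1}\big)^{k}$ with the above $c$ already has $c-1 = \Theta((\kappa-1)/n)$ baked in, which is what produces $\sqrt{n(\kappa-1)}$ rather than $\sqrt{\kappa-1}$ after taking logarithms. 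So the "$k/n$" in \ineqref{ineq:convergence_sketch_lb} is really an artifact of how $c$ depends on $n$, and the care needed is in the bookkeeping of that dependence, together with checking that $F_{\bbeta}$ genuinely lies in the $L$-smooth, $\mu$-strongly convex class for every $\bbeta\in\cH_{\text{FSM}}$ (this constrains the eigenvalues of $Q_{\eta_i}$, which is why $\eta_i$ ranges only over $[-(L-\mu)/2,(L-\mu)/2]$).
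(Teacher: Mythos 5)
Your overall architecture matches the paper's: the polynomial-degree induction (the paper's Lemma~\ref{lem:pcli_coefficients_polynomials_finite_sum}), the reduction to Chebyshev-type approximation of $1/(\eta_j-c)$, Yao's principle with an $L_1$ bound for the stochastic case, and a separate argument for the additive $n$ term (the paper's Theorem~\ref{thm:non_trivial_accuracy_finite_sum}, which uses the $\Omega(R/n)$ separation of minimizers from Lemma~\ref{lemma:n_iteration_finite_sum}). However, your resolution of the $k/n$-exponent difficulty is wrong, and it is precisely the step that produces the $\sqrt{n(\kappa-1)}$ factor. You correctly observe that one cannot claim some component $j$ receives a restriction of degree only $k/n$, but you then assert that using the exponent $k$ together with $c-1=\Theta((\kappa-1)/n)$ "already produces $\sqrt{n(\kappa-1)}$." It does not: with $\rho=\bigl(\sqrt{(\kappa-1)/n+1}-1\bigr)/\bigl(\sqrt{(\kappa-1)/n+1}+1\bigr)$ one has $-\ln\rho=\Theta\bigl(\sqrt{n/(\kappa-1)}\bigr)$, so requiring $\rho^{k}\le\epsilon$ forces only $k=\Omega\bigl(\sqrt{(\kappa-1)/n}\,\ln(1/\epsilon)\bigr)$ — a bound weaker than the theorem by a full factor of $n$. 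The exponent $k/n$ is essential, not an artifact of how $c$ depends on $n$.

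The paper obtains the $k/n$ exponent through the Yao step itself: the hard distribution $\cU(\cH)$ first draws the active component $j$ uniformly from $[n]$, so the expected error is $\frac{1}{n}\sum_{j=1}^n$ of the per-component $L_1$ approximation errors, each at least $\rho^{k_j}$ where $k_j$ is the degree of the restriction $s_j(\eta_j)$. Convexity of $u\mapsto\rho^u$ gives $\frac{1}{n}\sum_j\rho^{k_j}\ge\rho^{\frac{1}{n}\sum_j k_j}$, and the constraint $\sum_j k_j\le k$ coming from the total-degree bound yields $\rho^{k/n}$. In other words, the averaging over which component is perturbed is not merely a device for handling stochastic algorithms — it is the mechanism that converts the total-degree budget $k$ into an effective per-component budget $k/n$, and your proposal is missing it. Without this (or an equivalent argument), your deterministic "pick the worst $j$" route and your stochastic route both deliver only $\tilde\Omega\bigl(\sqrt{(\kappa-1)/n}\,\ln(1/\epsilon)\bigr)$.
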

The lower bound stated in \thmref{thm:finite_sum_lb} is tight and is attained by, e.g., SAG combined with an acceleration scheme (e.g., \cite{lin2015universal}). Moreover, as mentioned earlier, our lower bound does not depend on the problem dimension (or equivalently, holds for any number of iterations, regardless of $d$ and $n$), and covers coordinate descent methods with stochastic or deterministic coordinate schedule (in the special case where $n=1$, this gives a lower bound for minimizing smooth and strongly convex functions by performing steepest coordinate descent steps). Also, our bound implies that using mini-batches for tackling FSM does not reduce the overall iteration complexity. Lastly, it is noteworthy that the $n$ term in the lower bound above holds for any algorithm accompanied with an \emph{incremental oracle}, which grants access to at most one individual function each time. 

We also derive a nearly-optimal lower bound for smooth non-strongly convex functions for the more restricted setting of  $n=1$ and first-order oracle. The parameterized subset of functions we use (see Scheme 2.1)  is
$g_\eta(\bx)\coloneqq \frac\eta2 \norm{\bx}^2 - R\eta\be_1^\top\bx,\quad\eta\in(0,L]$.
The corresponding minimizer (as a function of $\eta$) is $\bx^{*}(\eta)=R\be_1$, and in this case we seek to approximate it w.r.t. $L_2$-norm using $k$-degree univariate polynomials whose constant term vanishes. The resulting bound is dimension-free and improves upon other bounds for this setting (e.g. \cite{arjevani2016iteration}) in that it applies to deterministic algorithms, as well as to stochastic algorithms (see \ref{proof:thm:smooth_lb} for proof).
\begin{theorem}\label{thm:smooth_lb}
The iteration complexity of any oblivious (possibly stochastic) CLI for $L$-smooth convex functions equipped with a first-order oracle, is bounded from below by 
\begin{align*}
\Omega\circpar{\circpar{L(\delta-2)/\epsilon}^{1/\delta}},~\delta\in(2,4).
\end{align*}
\end{theorem}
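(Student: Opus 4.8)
The plan is to instantiate Scheme 2.1 with the one‑parameter family $\{g_\eta : \eta\in(0,L]\}$, reduce the iteration‑complexity question to a \emph{weighted} polynomial‑approximation problem, and then invoke Chebyshev‑type extremal estimates. First I would restrict attention to the $g_\eta$'s only. Since $g_\eta(\bw)=\frac{\eta}{2}\|\bw-R\be_1\|^2-\frac{\eta R^2}{2}$, the suboptimality of any point is $g_\eta(\bw)-\min g_\eta=\frac{\eta}{2}\|\bw-R\be_1\|^2$, so only the first coordinate matters. As in the proof sketch preceding \thmref{thm:finite_sum_lb}, one checks (the same degree/constant‑term bookkeeping, using that $\nabla g_\eta(\bw)=\eta(\bw-R\be_1)$ carries an overall factor of $\eta$ and that the oblivious coefficients do not depend on $\eta$) that the coordinates of the iterates are univariate polynomials in $\eta$ of degree at most $k$ with vanishing constant term. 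Writing $s_k(\eta)$ for the first coordinate of $\bw^{(k)}_1(\eta)$, we thus have $s_k\in\cP^0_k:=\{s\in\cP_k:s(0)=0\}$, and for a \emph{stochastic} algorithm Jensen's inequality gives $\bE[g_\eta(\bw^{(k)}_1)-\min g_\eta]\ge\tfrac{\eta}{2}\,\bE[(s_k(\eta)-R)^2]\ge\tfrac{\eta}{2}\,(\bar s_k(\eta)-R)^2$, where $\bar s_k:=\bE[s_k]\in\cP^0_k$; this single step absorbs the randomness, so it remains to lower bound a deterministic quantity over $\cP^0_k$.

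The key idea is that the constraint $s(0)=0$ forces an error only near $\eta=0$, precisely where the suboptimality weight $\eta$ is small; to extract a strong bound one should therefore average $\eta$ against a probability density concentrated near $0$. For $\delta\in(2,4)$ I would take $p_\delta(\eta)=\frac{\delta-2}{2L^{(\delta-2)/2}}\,\eta^{(\delta-4)/2}$ on $(0,L]$ — this is a genuine probability density exactly because $\delta>2$ (integrability at the origin) and it is concentrated near the origin exactly because $\delta<4$; the range $\delta\in(2,4)$ in the statement is nothing but these two requirements, and the impossibility of $\delta=2$ (non‑integrable density) is what keeps the bound a hair below the optimal $\sqrt{L/\epsilon}$. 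Averaging the previous inequality over $\eta\sim p_\delta$ and changing variables $\eta=Lx$ turns the theorem into the lower bound
\[
 \sup_{f}\ \bE[f(\bw^{(k)}_1)-\min f]\ \ge\ \frac{(\delta-2)R^2L}{4}\,m_k,\qquad m_k:=\min_{r\in\cP_k,\ r(0)=1}\ \int_0^1 x^{(\delta-2)/2}\,r(x)^2\,dx,
\]
where $r$ plays the role of $1-s/R$. Thus the theorem reduces to showing $m_k=\Omega(k^{-\delta})$, which one inverts in $k$ to obtain the stated $\Omega\big((L(\delta-2)/\epsilon)^{1/\delta}\big)$ (with $R$ a fixed scale of the class).

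To prove $m_k=\Omega(k^{-\delta})$ I would split off the boundary layer at the natural resolution scale $k^{-2}$. Restricting the integral to $I:=[k^{-2},1]$ costs only the factor $x^{(\delta-2)/2}\ge k^{-(\delta-2)}$ there, so $m_k\ge k^{-(\delta-2)}\,\|r\|^2_{L_2(I)}$. On the other hand, the affine map of $I$ onto $[-1,1]$ sends $0$ to $-(1+k^{-2})/(1-k^{-2})=-(1+O(k^{-2}))$, so the same Chebyshev extremal principle behind \ineqref{ineq:basic_chebyshev} yields $1=|r(0)|\le T_k\!\big(\tfrac{1+k^{-2}}{1-k^{-2}}\big)\,\|r\|_{L_\infty(I)}=O(1)\cdot\|r\|_{L_\infty(I)}$, since $T_k(1+O(k^{-2}))=O(1)$; hence $\|r\|_{L_\infty(I)}=\Omega(1)$. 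Combining this with a Nikolskii‑type inequality for degree‑$k$ polynomials on an interval of length $\Theta(1)$, namely $\|r\|_{L_\infty(I)}\le C\,k\,\|r\|_{L_2(I)}$, gives $\|r\|_{L_2(I)}=\Omega(1/k)$, and therefore $m_k=\Omega\big(k^{-(\delta-2)}\cdot k^{-2}\big)=\Omega(k^{-\delta})$, as required.

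The main obstacle I expect is this last, approximation‑theoretic step: one must calibrate the boundary‑layer width ($k^{-2}$, dictated by the resolution of degree‑$k$ polynomials near an endpoint) so that the Chebyshev growth estimate and the Nikolskii inequality can be applied on the \emph{same} subinterval with constants independent of $k$, while simultaneously arranging that the weight factor $x^{(\delta-2)/2}$ produces exactly the exponent $\delta$; obtaining a matching (up to constants) bound rather than one degraded by the weight is the delicate point, and it is also what ties the result to the range $\delta\in(2,4)$. A secondary, bookkeeping‑type issue is the structural claim that no constant term in $\eta$ is introduced into the iterates — this has to be checked for the general oracle of \defref{definition:pcli}, in particular for its affine component, exactly in the spirit of the degree count in the proof of \thmref{thm:finite_sum_lb}.
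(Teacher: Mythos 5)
Your proposal is correct, and while it follows the paper's overall reduction (same hard family $g_\eta$, same structural lemma that the iterates are degree-$k$ polynomials in $\eta$ with vanishing constant term, and the same weight $\eta^\alpha$ with $\alpha=(\delta-4)/2\in(-1,0)$), it diverges from the paper in the two substantive steps. First, where the paper invokes Yao's minimax principle to pass from a stochastic CLI to a deterministic polynomial and only then applies Jensen, you absorb the randomness directly via Jensen on $x\mapsto(x-R)^2$, replacing the random polynomial by its expectation $\bar s_k\in\overline{\cP}_k$; this is a legitimate shortcut here because the final quantity is a pointwise-in-$\eta$ expectation of a convex function of $s_k(\eta)$. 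Second, and more significantly, the core estimate $m_k=\min_{r\in\cP_k,\,r(0)=1}\int_0^1 x^{(\delta-2)/2}r(x)^2\,dx=\Omega(k^{-\delta})$ is obtained in the paper by an \emph{exact} evaluation of the weighted $L_2$ best-approximation error as a ratio of Gram determinants of Cauchy matrices (\lemref{appendix:lem:l2_norm_lb}), yielding the explicit constant $1/(e^2(k+2)^{2(\alpha+1)+2})$; you instead use a boundary-layer decomposition at scale $k^{-2}$, combining the Chebyshev growth bound $|r(0)|\le T_k\bigl(\tfrac{1+k^{-2}}{1-k^{-2}}\bigr)\|r\|_{L_\infty([k^{-2},1])}=O(1)\|r\|_{L_\infty}$ with the Nikolskii inequality $\|r\|_{L_\infty}\le Ck\|r\|_{L_2}$ on an interval of length $\Theta(1)$. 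Both routes give the same exponent $\delta=2(\alpha+1)+2$; the paper's computation buys sharp constants, while yours is more robust and makes transparent why the exponent is $(\delta-2)+2$ (weight decay at the origin plus the $k^{-2}$ endpoint resolution of degree-$k$ polynomials). One caveat you correctly flag but should not understate: the claim that the iterates have vanishing constant term requires the additive oracle component $C$ to be handled (the paper's \lemref{lem:pcli_coefficients_polynomials_smooth} glosses over this in exactly the same way), so your treatment is at parity with the paper's on this point.
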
											

							\section{Lower Bound for Dual Regularized Loss Minimization with Linear Predictors}\label{section:lb_dual_rlm}
The form of functions (\ref{problem:finite_sum})  discussed in the previous section does not readily adapt to general RLM problems with linear predictors, i.e.,
\begin{align} \label{opt:rlm}
	\min_{\bw\in\reals^d} P(\bw)&\coloneqq\frac{1}{n}\sum_{i=1}^n \phi_i(\inner{\bx_i,\bw}) + \frac{\lambda}{2}\norm{\bw}^2,
\end{align} 
where the loss functions $\phi_i$ are $L$-smooth and convex, the samples $\bx_1,\dots,\bx_n$ are $d$-dimensional vectors in $\reals^d$ and $\lambda$ is some positive constant. Thus, dual methods which exploit the added structure of this setting
through the \emph{dual problem} \cite{shalev2013stochastic}, 
\begin{align} \label{opt:dual_rlm}
	\min_{\balpha\in\reals^n} D(\balpha)=\frac{1}{n}\sum_{i=1}^n \phi^*_i(-\alpha_i) + \frac{\lambda}{2} \left\|\frac{1}{\lambda n} \sum_{i=1}^n \bx_i \alpha_i \right\|^2,
\end{align}
such as SDCA and accelerated proximal SDCA, are not covered by \thmref{thm:finite_sum_lb}. Accordingly, in this section, we address the iteration complexity of oblivious (possibly stochastic) CLI algorithms equipped with dual RLM oracles:
\begin{align} \label{oracle:dual_rlm}
&\cO(\balpha;t,j) = \balpha + t\nabla_j D(\balpha)\be_j, \quad t\in\reals,j\in[n],\\
&\cO(\balpha;j) = \balpha + t^*\be_j, \quad t^*=	\argmin_{t\in\reals} D(\alpha_1,\dots,\alpha_{j-1},\alpha_j +t,\alpha_{j+1},\dots,\alpha_d), j\in[n],\nonumber
\end{align}

Following Scheme 2.1, we first describe the relevant parametrized subset of RLM problems. 
For the sake of simplicity, we assume that $n$ is even (the proof for odd $n$ holds mutatis mutandis). We denote by $\cH_{\text{RLM}}$ the set of all $(\psi_1,\dots,\psi_{n/2})\in\reals^{n/2}$ such that all entries are $0$, except for some $j\in[n/2]$, for which $\psi_j\in \left[-\pi/2,\pi/2\right] $. Now, given ${\bpsi}\in\cH_{\text{RLM}}$, we set $P_{\bpsi}$ (defined in \ref{opt:rlm}) as follows 
\begin{align*}
	\phi_i(w)=\frac{1}{2}( w+1)^2,\quad  
	\bx_{\bpsi,i}&= \begin{cases}
	\cos(\psi_{(i+1)/2})\be_i +\sin(\psi_{(i+1)/2})\be_{i+1} 
	&i \text{ is odd}\\
	\be_i & \text{o.w.}
	\end{cases}.
\end{align*}
We state below the corresponding lower bound, whose proof, including logarithmic factors and constants, can be found in Appendix \ref{proof:thm:rlm_lb}. 
\begin{theorem}\label{thm:rlm_lb}
The iteration complexity of oblivious (possibly stochastic) CLIs for RLM (\ref{opt:rlm}) equipped with dual RLM oracles (\ref{oracle:dual_rlm}) is bounded from below by 
\begin{align*}
\tilde\Omega(n+\sqrt{nL/\lambda}\ln(1/\epsilon)).
\end{align*}
\end{theorem}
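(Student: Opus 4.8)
The plan is to instantiate Scheme~2.1 in the \emph{dual} space, paralleling the proof sketch of \thmref{thm:finite_sum_lb} but with the dual oracles \eqref{oracle:dual_rlm} in place of the primal ones. First I would write down the dual of $P_{\bpsi}$ for the given family: since $\phi_i(w)=\tfrac12(w+1)^2$ has conjugate $\phi_i^*(-\alpha)=\tfrac12\alpha^2+\alpha$, \eqref{opt:dual_rlm} reduces to the quadratic $D_{\bpsi}(\balpha)=\tfrac{1}{2n}\balpha^\top\!\bigl(I+\tfrac{1}{\lambda n}G_{\bpsi}\bigr)\balpha+\tfrac1n\bones^\top\balpha$, where $G_{\bpsi}$ is the Gram matrix of $\bx_{\bpsi,1},\dots,\bx_{\bpsi,n}$. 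By construction $G_{\bpsi}$ is block diagonal with $2\times2$ blocks $\bigl(\begin{smallmatrix}1&\sin\psi_j\\\sin\psi_j&1\end{smallmatrix}\bigr)$ ($j\in[n/2]$), since distinct pairs of samples have disjoint supports. Inverting the one active block shows that the two coordinates of the dual minimizer $\balpha^*(\bpsi)$ sitting in the active pair are both equal to $-\lambda n/(\lambda n+1+\sin\psi_j)$, i.e.\ a function of the form $\mathrm{const}\cdot\tfrac{1}{c-\eta}$ in the variable $\eta\coloneqq\sin\psi_j\in[-1,1]$, with $c=1+\Theta(\lambda n/L)>1$ once the general smoothness parameter is reinstated (the normalization $\phi_i(w)=\tfrac12(w+1)^2$ corresponds to $L=1$). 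This is precisely the object to which Chebyshev's estimate \ineqref{ineq:basic_chebyshev} applies, after the harmless symmetrization $\eta\mapsto-\eta$.

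Next comes the degree bookkeeping. Each of the two oracles in \eqref{oracle:dual_rlm} called with index $j$ modifies only the $j$-th coordinate, and its correction ($\nabla_jD$, resp.\ the exact line-search step $t^\ast$) is an affine function of the two coordinates in $j$'s pair whose leading coefficient does not depend on $\bpsi$ (the diagonal of $G_{\bpsi}$ is constant). Hence, when all coordinates of $\bpsi$ but $\psi_j$ are frozen at $0$, every coordinate of every iterate is a polynomial in $\sin\psi_j$ whose degree rises by at most one per oracle call to a coordinate of pair $j$ and is unaffected by calls to other pairs; moreover, because $G_{\bpsi}$ is block diagonal, data about pair $j$ never propagates outside the two coordinates of pair $j$. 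Since the algorithm is oblivious its index schedule is fixed in advance, so over $k$ iterations a pigeonhole over the $n/2$ pairs yields a pair $j^\ast$ queried at most $O(k/n)$ times; consequently the relevant coordinate of $\balpha^{(k)}_1$ belongs to $\cP_{O(k/n)}$ as a function of $\sin\psi_{j^\ast}$. Plugging into \ineqref{ineq:basic_chebyshev}, and using that $D_{\bpsi}$ is quadratic with $\lambda_{\min}(\nabla^2 D_{\bpsi})\ge \tfrac{1}{nL}$, gives for the worst $\bpsi$ a dual suboptimality of order $\tfrac{1}{nL}\bigl(c-\sqrt{c^2-1}\bigr)^{\Omega(k/n)}$ at iteration $k$; since $c-\sqrt{c^2-1}=\exp\!\bigl(-\Theta(\sqrt{\lambda n/L})\bigr)$ in the relevant regime $L/\lambda\gtrsim n$, forcing this below $\epsilon$ requires $k=\tilde\Omega\bigl(\sqrt{nL/\lambda}\ln(1/\epsilon)\bigr)$. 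A standard duality argument transfers the dual suboptimality bound to a bound on $P_{\bpsi}(\cdot)-\min P_{\bpsi}$.

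The two remaining ingredients I would handle exactly as for \thmref{thm:finite_sum_lb}. The additive $n$ term (which is the dominant one when $L/\lambda\lesssim n$) follows from the incremental-oracle observation: a dual oracle call inspects only one loss/sample pair, so any algorithm run for $o(n)$ iterations is blind to some $\phi_i,\bx_i$, and one exhibits a subfamily in which that ignored component alone pins down the optimum, giving a constant error lower bound. The extension to stochastic algorithms goes through Yao's minimax principle: against the input distribution that samples the active pair uniformly from $[n/2]$ and draws $\sin\psi_{j}$ from a fixed density on $[-1,1]$, a deterministic algorithm still has a fixed schedule and hence, as above, leaves some pair touched at most $O(k/n)$ times, so its expected error is bounded below by $\min_{s\in\cP_{O(k/n)}}\|s(\eta)-\tfrac{1}{c-\eta}\|$ measured now in a weighted $L_1$ (or $L_2$) norm, in the spirit of \eqref{opt:elementary_problem_L_1}.

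The step I expect to be the real obstacle is supplying the $L_1$/$L_2$ analogue of Chebyshev's bound \ineqref{ineq:basic_chebyshev}: one must show that no polynomial of degree $m$ can be $L_1$-close on $[-1,1]$ to $1/(c-\eta)$ beyond an error $\sim (c-\sqrt{c^2-1})^{m}$ (up to factors polynomial in $m$ or in the problem parameters), so that the stochastic lower bound matches the deterministic one; this likely proceeds via a Remez/Markov-type inequality comparing $L_1$ and $L_\infty$ norms of polynomials on nested intervals, or via an explicit computation against Chebyshev polynomials of the second kind. A lesser nuisance is making the degree bookkeeping airtight for $p$-CLI dynamics that sum up to $p^2$ oracle outputs per iteration and may mix the two oracle types, but this only affects the hidden constants in the $\tilde\Omega$.
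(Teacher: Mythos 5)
Your proposal follows essentially the same route as the paper: the same block-diagonal dual construction reducing everything to polynomial approximation of $1/(c-\eta)$ in the variable $\eta=\sin\psi_j$, the same per-pair degree bookkeeping, Yao's principle with a $\cos$-density on $\psi_j$ (equivalently, uniform in $\sin\psi_j$) for the stochastic case, and the incremental-oracle argument for the additive $n$ term. The one ingredient you flag as the real obstacle --- an $L_1$ analogue of Chebyshev's bound for $1/(c-\eta)$ on $[-1,1]$ --- is exactly the paper's \lemref{appendix:lem:l1_norm_lb}, and it is proved by the second of the two methods you suggest: orthogonality of $\sgn(U_k)$ (sign of the Chebyshev polynomial of the second kind) to $\cP_{k-1}$, followed by an explicit evaluation of $\int_{-1}^{1}\sgn(U_k(\eta))/(u-\eta)\,d\eta$.
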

This bound is tight w.r.t. the class of oblivious CLIs and is attained by accelerated proximal SDCA. 
As mentioned earlier, a tighter lower bound of $\tilde\Omega((n+1/\lambda)\ln(1/\epsilon))$ is known for SDCA \cite{arjevani2015lower}, suggesting that a tighter bound might hold for the more restricted set of stationary CLIs (for which the oracle parameters remain fixed throughout the optimization process).

\newpage

\bibliographystyle{plain}
\bibliography{../../../mybib}


\newpage
																								\appendix

\section{Proofs}

\subsection{Proof of \thmref{thm:lb_toy}}	\label{proof:thm:lb_toy}
\begin{proof} 
According to the way $\cA$ generates iterates, we have
\begin{align*}
	|x^{(k)}(\eta) - x^*(\eta)|&= |s_k(\eta)-1/\eta|,\quad \eta\in[\mu,L]
\end{align*}
for some polynomial $s_k(\eta)$ of degree at most $k$. By \lemref{lem:max_norm_general}, we have 
\begin{align*}
		\min_{s(\eta)\in\cP_k}\left\|s(\eta)-\frac{1}{\eta} \right\|_{L_{\infty}([\mu,L])} \ge\frac{L-\mu}{ 2 L\mu }\circpar{\frac{\sqrt{\kappa}-1}{\sqrt{\kappa}+1}}^k,
\end{align*}
where $\kappa=L/\mu$. Thus,
\begin{align*}
	|x^{(k)}(\eta) - x^*(\eta)|&\ge 	\min_{s(\eta)\in\cP_k}\left\|s(\eta)-\frac{1}{\eta} \right\|_{L_{\infty}([\mu,L])} \ge\frac{L-\mu}{ 2 L\mu }\circpar{\frac{\sqrt{\kappa}-1}{\sqrt{\kappa}+1}}^k\ge|x^*(\eta)|\frac{L-\mu}{ 2 L }\circpar{\frac{\sqrt{\kappa}-1}{\sqrt{\kappa}+1}}^k.
\end{align*}
Now, since $f_\eta$ is $\mu$-strongly convex, we have,
\begin{align*}
	f(x^{(k)}(\eta)) - f(x^*(\eta))|&\ge 	
	\frac{\mu}{2}|x^{(k)}(\eta) - x^*(\eta)|^2\\&\ge \frac{\mu}{2} \circpar{|x^*(\eta)|\frac{L-\mu}{ 2 L }\circpar{\frac{\sqrt{\kappa}-1}{\sqrt{\kappa}+1}}^k}^2\\&=
	 \frac{\mu}{2} \circpar{|x^*(\eta)|\frac{L-\mu}{ 2 L }}^2\circpar{\frac{\sqrt{\kappa}-1}{\sqrt{\kappa}+1}}^{2k}.
\end{align*}
Hence, by \lemref{lem:technical2tag}, the minimal number of iterations required to get an $\epsilon$-optimal solution is at least
\begin{align*}
	\frac{1}{4}{\sqrt{\kappa-1}} \circpar{\ln\frac\mu2+ 2\ln\circpar{|x^*(\eta)|\frac{L-\mu}{ 2 L }} + \ln(1/\epsilon ) }.
\end{align*}
\end{proof}

								\subsection{Proof of \thmref{thm:finite_sum_lb} - Finite Sums}	\label{proof:thm:finite_sum_lb}
When dealing with multivariate polynomials it is convenient to define multi-indices $\bi=(i_1,\dots,i_n)\in \bN_0^n$ , where $\bN_0^n$ is the set of all $n$-tuples of non-negative integers. In addition, with a slight abuse of notation, we define 
\begin{align}\label{def:multivariate_polynomials}
	\cP_k^n\coloneqq \spn{\eta^{\bi}~|~~\bi\in\bN_0^n,~|\bi|\le k},
\end{align} 
where we put $\eta^\bi=\eta_1^{i_1} \cdots\eta_n^{i_n}$ and $|\bi|=i_1+\cdots+i_n$. In words, $\cP_k^n$ is the set of all multivariate polynomials over $n$ indeterminates whose total degree (the maximal sum of the degrees over all terms) is less than or equal to $k$. Lastly, given $\bs(\bbeta)\in\cP_k^n$ we define
\begin{align*}
	s_i(\eta_i) &\coloneqq \bs\circpar{-\frac{L-\mu}{2},\dots,-\frac{L-\mu}{2},\underbrace{\eta_i}_{i\text{'th entry}},-\frac{L-\mu}{2},\dots,-\frac{L-\mu}{2}}.
\end{align*}
This notation will come in handy in the main proof.
								


The lemma below describes the functional form assumed by iterates produced by oblivious CLIs.
\begin{lemma} \label{lem:pcli_coefficients_polynomials_finite_sum}
When applied on (\ref{problem:finite_sum}) with suitable first-order and coordinate-descent oracles (as defined in \ref{eq:oracles_finite_sum}), the coordinates of iterates produced by oblivious stochastic CLIs form multivariate polynomials in $\bbeta$ with random real coefficients whose total degree does not exceed the iteration number.
\end{lemma}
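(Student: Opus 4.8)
The plan is to argue by induction on the iteration number $k$, tracking how the total degree in the parameter vector $\bbeta$ evolves under a single oracle call. It suffices to prove the claim for stochastic CLIs, since deterministic ones are a special case; and for each fixed realization of the algorithm's internal randomness the statement reduces to one about a deterministic sequence of polynomials, with the randomness entering only through their coefficients. For the base case, the initialization points $\{\bw^{(0)}_i\}_{i\in\cJ}$ may depend only on the (given) side-information $L,\mu,n$ and in particular not on $\bbeta$, so each of their coordinates is a random element of $\cP_0^n$.

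For the inductive step, suppose that for every $j\in\cJ$ the coordinates of $\bw^{(k)}_j$ lie in $\cP_k^n$ with random real coefficients, and fix $i\in\cJ$. By (\ref{assumption:dynamics}), $\bw^{(k+1)}_i$ is a sum over $j\in\cJ$ of oracle answers $\cO_f(\bw^{(k)}_j;\theta^{(k)}_{ij})$, and --- this is the one place obliviousness is used --- each parameter tuple $\theta^{(k)}_{ij}$ is chosen without access to previously obtained oracle answers, hence does not depend on $\bbeta$ (though it may be random). Since a finite sum of vectors with coordinates in $\cP_{k+1}^n$ again has coordinates in $\cP_{k+1}^n$, it suffices to check this for a single oracle answer. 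For the first-order oracle (\ref{eq:oracles_finite_sum}), write $Q_{\eta_j}=Q_0+\eta_j(\be_1\be_2^\top+\be_2\be_1^\top)$ with $Q_0,\bq$ independent of $\bbeta$; then $A(Q_{\eta_j}\bw^{(k)}_j-\bq)+B\bw^{(k)}_j+C$ is, coordinatewise, a $\bbeta$-independent linear combination of coordinates of $\bw^{(k)}_j$ (degree $\le k$), of $\eta_j(\bw^{(k)}_j)_1$ and $\eta_j(\bw^{(k)}_j)_2$ (degree $\le k+1$), and of constants, so every coordinate is in $\cP_{k+1}^n$.

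For the coordinate-descent oracle (\ref{eq:oracles_finite_sum}) the decisive point is that the pivot entry $(Q_{\eta_j})_{ii}$ is a $\bbeta$-independent constant ($(L+\mu)/2$ for $i\in\{1,2\}$ and $\mu$ otherwise), so $1/(Q_{\eta_j})_{ii}$ never puts $\eta_j$ in a denominator; the row $(Q_{\eta_j})_{i,*}$ is affine in $\eta_j$, so $(Q_{\eta_j})_{i,*}\bw^{(k)}_j$ has degree $\le k+1$, and subtracting the constant multiple $(1/(Q_{\eta_j})_{ii})\be_i(Q_{\eta_j})_{i,*}\bw^{(k)}_j$ from $\bw^{(k)}_j$, together with the constant vector $-q_i/(Q_{\eta_j})_{ii}\be_i$, keeps every coordinate in $\cP_{k+1}^n$. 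This closes the induction; and since the algorithm may freely switch between the two oracles (and between components $j$ and coordinates $i$) across iterations, but each individual call raises the total degree by at most one, the bound on the total degree by the iteration number holds for any schedule.

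The only step needing genuine care is the coordinate-descent oracle: exact minimization along a coordinate is, in general, a rational operation, and would make the iterates rational functions of $\bbeta$ rather than polynomials. The argument succeeds precisely because the construction (\ref{problem:finite_sum}) places $\eta_i$ only in off-diagonal positions, so that all pivots are fixed constants; had $\eta_i$ appeared on the diagonal, the degree bookkeeping --- and hence the subsequent reduction to Chebyshev-type approximation lower bounds via (\ref{ineq:basic_chebyshev}) --- would collapse. Everything else is routine linear algebra together with the elementary facts that the total degree of a product of polynomials is the sum of the total degrees and the total degree of a sum is at most the larger of the two.
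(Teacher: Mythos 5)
Your proof is correct and follows essentially the same induction as the paper's: the base case uses that initialization depends only on the side-information, and the inductive step uses obliviousness to keep the oracle parameters $\bbeta$-independent while each oracle answer is affine in $\eta_j$, raising the total degree by at most one. The only difference is presentational — you make explicit the point the paper leaves implicit, namely that the coordinate-descent pivot $(Q_{\eta_j})_{ii}$ is a constant because $\eta_j$ sits only off the diagonal, which is indeed the one place the argument could otherwise fail.
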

\begin{proof}
Let $\cA$ be a oblivious stochastic CLI, and suppose we apply $\cA$ on the class of problems (\ref{problem:finite_sum}) parameterized by $\bbeta$, using both first-order and coordinate-descent oracles as defined in \ref{eq:oracles_finite_sum}. We use mathematical induction to show that for any $k=0,1,\dots$, the coordinate of the $k$'th iterate produced by such process can be expressed as a distribution over multivariate polynomials in $\bbeta$ of degree at most $k$.

As the first iterate $\bw^{(0)}_i$ is allowed to depend only on $L,\mu$ and $n$, the base case is trivial. That is, the coordinates of $\bw_i^{(0)}$ form distributions over $\reals=\cP^n_0$ which do not depend on $\bbeta$.

For the inductive step, assume that any coordinate of $\bw^{(k)}_i(\bbeta)$ can be expressed as a distribution over $\cP^n_k$. It is easy to see that for any $\bw_i^{(k)}(\bbeta)$, the answers of both oracles, 
\begin{align*} 
\begin{array}{lll}
\text{First-order oracle:}	&\cO(\bw_i^{(k)};A,B,C,j) = A(Q_{\eta_j} \bw_i^{(k)} - \bq) + B\bw_i^{(k)}+C,\\
\text{Coordinate-descent oracle:}&\cO(\bw_i^{(k)};i,j) = 
\circpar{I- (1/(Q_{\eta_j})_{ii})\be_i(Q_{\eta_j})_{i,*} }\bw_i^{(k)} - q_i/(Q_{\eta_j})_{ii}\be_i,
\end{array}
\end{align*}
form a distribution over $\cP^n_{k+1}$, as all the random quantities involved in the expressions ($A,B,C,i$ and $j$) do not depend on $\eta_1,\dots,\eta_n$ (due to obliviosity) and the rest of the terms ($I,Q_{\eta_j},1/(Q_{\eta_j})_{ii},(Q_{\eta_j})_{i,*},\be_i,q_i$ and $\bq$) are either linear in $\eta_j$ or constants. Lastly, $\bw_i^{(k+1)}$ are computed by simply summing up all the oracle answers, and as such, form again distributions over $\cP^n_{k+1}$.
\end{proof}
\begin{proof}[\thmref{thm:finite_sum_lb}]
Let $\cA$ be a oblivious stochastic CLI. By \lemref{lem:pcli_coefficients_polynomials_finite_sum} the first coordinate of $\bw^{(k)}_1(\bbeta)$ (the point returned by the algorithm at the $k$'th iteration) when applied on the class of problems (\ref{problem:finite_sum}) distributes according to some distribution $\cD$ over $\cP^n_k$. 
 Thus, by Yao principle, since each polynomial in $(\cP^n_k)^d$ represents a single deterministic algorithm, we have 
\begin{align} \label{ineq:proof:thm:finite_sum_lb_ineq1}
	\max_{\bbeta\in\cH}\bE_{\bw_1^{(k)}(\bbeta)\sim\cD}\|\bw_1^{(k)}(\bbeta)-\bw^*(\bbeta)\|\ge \min_{\bs(\bbeta)\in(\cP^n_k)^d} \bE_{\bbeta\sim\cU(\cH)} \|\bs(\bbeta)-\bw^*(\bbeta)\|
\end{align}
where $\cU(\cH)$ denotes a distribution over $\cH$ which corresponds to  first drawing $j\sim\cU([n])$ at random, and then setting the coordinates of $\bbeta$ as follows
\begin{align}
\begin{cases}
\eta_i\sim\cU([-(L-\mu)/2,(L-\mu)/2]&i=j\\
\eta_i=-\frac{L-\mu}{2},&i\neq j
\end{cases}.
\end{align}
Furthermore, it is easy to verify that the corresponding minimizers of (\ref{problem:finite_sum}) are
\begin{align}
	\bw^*(\eta_1,\dots,\eta_n) &= \circpar{\frac{1}{n}\sum_{i=1}^n Q_{\eta_i}}^{-1}\bq
	=\circpar{\frac{R\mu}{\sqrt2\circpar{\frac{L+\mu}{2}+\frac{1}{n}\sum_{i=1}^n \eta_i } } ,
	          \frac{R\mu}{\sqrt2\circpar{\frac{L+\mu}{2}+\frac{1}{n}\sum_{i=1}^n \eta_i } },
						0,\dots,0}^\top.
\end{align} 
We now have,
\begin{align}\label{ineq:proof:thm:finite_sum_lb_ineq2}
\min_{\bs(\bbeta)\in(\cP^n_k)^d} \bE_{\bbeta\sim\cU(\cH)} \|\bs(\bbeta)-\bw^*(\bbeta)\| &=
\min_{\bs(\bbeta)\in(\cP^n_k)^d} \bE_{i\sim\cU([n])} \bE_{\eta_i\sim\cU([-\frac{L-\mu}{2},\frac{L-\mu}{2}])} \|\bs(\bbeta)-\bw^*(\bbeta)\| \nonumber\\
&\ge
\frac{1}{n}\min_{s(\bbeta)\in\cP^n_k}\sum_{j=1}^n \bE_{\eta_j\sim\cU([-\frac{L-\mu}{2},\frac{L-\mu}{2}])}  \left|s_j(\eta_j)-\frac{R\mu}{\sqrt2(\frac{1}{n}\sum_{i=1}^n\eta_i) +\frac{L+\mu}{2}) }\right|\nonumber\\
&\ge
\frac{R\mu}{\sqrt2}\min_{s(\bbeta)\in\cP^n_k}\sum_{j=1}^n\bE_{\eta_j\sim\cU([-\frac{L-\mu}{2},\frac{L-\mu}{2}])}  \left|s_j(\eta_j)-\frac{1}{\eta_j - (n-1)\frac{L-\mu}{2} +n\frac{L+\mu}{2} }\right|\nonumber\\
&\ge
\frac{R\mu}{\sqrt2}\min_{s(\bbeta)\in\cP^n_k}\sum_{j=1}^n \int_{-\frac{L-\mu}{2}}^{\frac{L-\mu}{2}} \left|s_j(\eta_j)-\frac{1}{\eta_j - (n-1)\frac{L-\mu}{2} +n\frac{L+\mu}{2} }\right|\frac{1}{L-\mu} d\eta_j\nonumber\\
&\ge
\frac{R\mu}{\sqrt2(L-\mu)}\min_{s(\bbeta)\in\cP^n_k}\sum_{j=1}^n \int_{-\frac{L-\mu}{2}}^{\frac{L-\mu}{2}} \left|s_j(\eta_j)-\frac{1}{\eta_j - (n-1)\frac{L-\mu}{2} +n\frac{L+\mu}{2} }\right| d\eta_j
\end{align}
where the first inequality follows by focusing on the first coordinate of $\bs(\bbeta)-\bw^*(\bbeta)$. Now, set $\alpha=-(n-1)\frac{L-\mu}{2} +n\frac{L+\mu}{2}$  and note that
\begin{align*}
\sqrt\frac{2\alpha+L-\mu}{2\alpha+\mu-L}=
	\sqrt\frac{2(-(n-1)\frac{L-\mu}{2} +n\frac{L+\mu}{2})+L-\mu}{2(-(n-1)\frac{L-\mu}{2} +n\frac{L+\mu}{2})+\mu-L}&=
	\sqrt{\frac{\kappa-1}{ n} + 1}.
\end{align*}
Thus, by \lemref{appendix:lem:l1_norm_lb} (using the same value for $\alpha$ and noting that $\alpha>(L-\mu)/2$) yields 
\begin{align*}
	\int_{-\frac{L-\mu}{2}}^{\frac{L-\mu}{2}} \left|s_j(\eta_j)-\frac{1}{\eta_j - (n-1)\frac{L-\mu}{2} +n\frac{L+\mu}{2} }\right| d\eta_j&\ge\circpar{\frac{\sqrt{\frac{\kappa-1}{ n} + 1}-1}{	\sqrt{\frac{\kappa-1}{ n} + 1}+1}}^{k_j}.
\end{align*}
where $k_j$ denotes the degree of $s_j(\eta_j)$. Plugging in this into \ineqref{ineq:proof:thm:finite_sum_lb_ineq2} we get 
\begin{align*}
\max_{\bbeta\in\cH}\bE_{\bw_1^{(k)}(\bbeta)\sim\cD}\|\bw_1^{(k)}(\bbeta)-\bw^*(\bbeta)\|&\ge
\frac{nR\mu}{\sqrt2(L-\mu)}\min_{\bs(\bbeta)\in\cP^n_k}\frac{1}{n}\sum_{j=1}^n \circpar{\frac{\sqrt{\frac{\kappa-1}{ n} + 1}-1}{	\sqrt{\frac{\kappa-1}{ n} + 1}+1}}^{k_j}.
\end{align*}
Since $u\mapsto\rho^u$ is a decreasing and convex function for any $1>\rho>0$, we have
\begin{align*}
\frac{nR\mu}{\sqrt2(L-\mu)}\min_{\bs(\bbeta)\in\cP^n_k}\frac{1}{n}\sum_{j=1}^n \circpar{\frac{\sqrt{\frac{\kappa-1}{ n} + 1}-1}{	\sqrt{\frac{\kappa-1}{ n} + 1}+1}}^{k_j}&\ge
\frac{nR\mu}{\sqrt2(L-\mu)}\min_{\bs(\bbeta)\in\cP^n_k} \circpar{\frac{\sqrt{\frac{\kappa-1}{ n} + 1}-1}{	\sqrt{\frac{\kappa-1}{ n} + 1}+1}}^{\frac{1}{n}\sum_{j=1}^n k_j}\\&\ge
\frac{nR\mu}{\sqrt2(L-\mu)}\circpar{\frac{\sqrt{\frac{\kappa-1}{ n} + 1}-1}{	\sqrt{\frac{\kappa-1}{ n} + 1}+1}}^{k/n}
\end{align*}
where the last inequality is due to the fact that $\bs(\bbeta)\in\cP^n_k$ which implies that $\sum_{j=1}^n k_j \le k$.
Finally, we have,
\begin{align*}
\max_{\bbeta\in\cH}\bE_{\bw_1^{(k)}(\bbeta)\sim\cD}[ F_{\bbeta}(\bw_1^{(k)}(\bbeta))-F_{\bbeta}(\bw^*(\bbeta))]&\ge
	\max_{\bbeta\in\cH}\bE_{\bw_1^{(k)}(\bbeta)\sim\cD}\frac{\mu}{2}\|\bw_1^{(k)}(\bbeta)-\bw^*(\bbeta)\|^2\\&\ge
	\frac\mu2\circpar{\frac{nR\mu}{\sqrt2(L-\mu)}\circpar{\frac{\sqrt{\frac{\kappa-1}{ n} + 1}-1}{	\sqrt{\frac{\kappa-1}{ n} + 1}+1}}^{k/n}}^2\\&=
		\frac\mu2\circpar{\frac{nR\mu}{\sqrt2(L-\mu)}}^2\circpar{\frac{\sqrt{\frac{\kappa-1}{ n} + 1}-1}{	\sqrt{\frac{\kappa-1}{ n} + 1}+1}}^{2k/n}
\end{align*}
where the first inequality follows by the $\mu$-strong convexity of $F_{\bbeta}$ and the second inequality follows by Jensen inequality.
Using \lemref{lem:technical2tag}, we get that the iteration complexity of $\cA$ is at least 
\begin{align*}
		 \frac{1}{4}\circpar{\sqrt{n(\kappa-1) }} (\ln\frac\mu2+2\ln\frac{nR\mu}{\sqrt2(L-\mu)} + \ln(1/\epsilon ) ).
\end{align*}
This, together with \thmref{thm:non_trivial_accuracy_finite_sum} below, concludes the proof.

\end{proof}

We bound from below the number of iterations required to obtain a non-trivial accuracy. 
\begin{lemma}\label{lemma:n_iteration_finite_sum}
Let $j\in[n]$, let $\bbeta_{j,1}\in\cH$ be a parameters vector whose all coordinates are $-\frac{L-\mu}{2}$ and let $\bbeta_{j,2}\in\cH$ be a parameters vector whose all coordinates are $-\frac{L-\mu}{2}$, except for the $j$'th coordinate which we set to be $\frac{L-\mu}{2}$. If $\kappa>3$, then  
\begin{align*}
	\|\bw^*(\bbeta_1) -  \bw^*(\bbeta_2)\|\ge \frac{2R}{n+2}.
\end{align*} 
\end{lemma}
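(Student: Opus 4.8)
The plan is a direct computation: the explicit formula \eqref{eq:finite_sum_minimizers} shows that $\bw^*(\bbeta)$ depends on $\bbeta$ only through the mean $\bar\eta \coloneqq \tfrac1n\sum_{i=1}^n\eta_i$, so it suffices to evaluate that mean at the two chosen parameter vectors and then compare two rational expressions in $\kappa$ and $n$.

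First I would compute the means. For $\bbeta_1$ all coordinates equal $-(L-\mu)/2$, so $\bar\eta = -(L-\mu)/2$ and hence $\tfrac{L+\mu}{2}+\bar\eta = \mu$; by \eqref{eq:finite_sum_minimizers} the first two coordinates of $\bw^*(\bbeta_1)$ then equal $R/\sqrt2$, the remaining coordinates being $0$. For $\bbeta_2$ exactly one coordinate is flipped to $+(L-\mu)/2$, so $\bar\eta = -\tfrac{(n-2)(L-\mu)}{2n}$ and a short simplification gives $\tfrac{L+\mu}{2}+\bar\eta = \tfrac{L+(n-1)\mu}{n}$; the first two coordinates of $\bw^*(\bbeta_2)$ are therefore $\tfrac{nR\mu}{\sqrt2\,(L+(n-1)\mu)}$.

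Next I would take the difference. The two minimizers agree (at $0$) outside their first two coordinates, and within each minimizer the first two coordinates coincide, so the norm of the difference is $\sqrt2$ times the common coordinate gap. After cancelling the $\sqrt2$ and simplifying $1-\tfrac{n\mu}{L+(n-1)\mu}=\tfrac{L-\mu}{L+(n-1)\mu}$, this yields $\|\bw^*(\bbeta_1)-\bw^*(\bbeta_2)\| = \tfrac{R(L-\mu)}{L+(n-1)\mu}$, which, dividing numerator and denominator by $\mu$, equals $\tfrac{R(\kappa-1)}{\kappa+n-1}$.

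Finally, the claimed bound $\tfrac{R(\kappa-1)}{\kappa+n-1}\ge\tfrac{2R}{n+2}$ reduces — both denominators being positive for $\kappa>3$ and $n\ge1$ — to $(\kappa-1)(n+2)\ge 2(\kappa+n-1)$, i.e. $n(\kappa-3)\ge0$, which holds under the hypothesis $\kappa>3$. There is no genuine obstacle here: the computation is elementary, and the only points that require a little care are the factor $\sqrt2$ arising from the two equal nonzero coordinates of each minimizer, and keeping the sign of $\bar\eta$ for $\bbeta_2$ straight.
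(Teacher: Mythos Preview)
Your proof is correct and follows essentially the same direct computation as the paper: both evaluate \eqref{eq:finite_sum_minimizers} at the two parameter vectors, take the norm of the difference (picking up the factor $\sqrt2$ from the two equal nonzero coordinates), and simplify to $\tfrac{R(\kappa-1)}{\kappa+n-1}$ before checking the elementary inequality that reduces to $n(\kappa-3)\ge0$. Your write-up is in fact a bit cleaner than the paper's, which carries the intermediate fractions longer before arriving at the same closed form.
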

\begin{proof}
By \eqref{eq:finite_sum_minimizers} we have
\begin{align*}
	\|\bw*\circpar{\bbeta_1} -  \bw*\circpar{\bbeta_2}\|
	&=\sqrt2\left|\frac{R\mu}{\sqrt2\circpar{\frac{L+\mu}{2}+\frac{1}{n}\sum_{i=1}^n (\eta_1)_i } } - 
	\frac{R\mu}{\sqrt2\circpar{\frac{L+\mu}{2}+\frac{1}{n}\sum_{i=1}^n (\eta_2)_i } }\right|\\
	&=R\mu\left|\frac{1}{\frac{L+\mu}{2}-\frac{L-\mu}{2}}   - 
	\frac{1}{\frac{L+\mu}{2}-\frac{(n-1)(L-\mu)}{2n} +\frac{L-\mu}{2n}  }\right|\\
	&=R\mu\left|
	\frac{\frac{L+\mu}{2}-\frac{(n-1)(L-\mu)}{2n} +\frac{L-\mu}{2n}  - \frac{L+\mu}{2}+\frac{L-\mu}{2}}
	{	\circpar{\frac{L+\mu}{2}-\frac{L-\mu}{2}}	\circpar{\frac{L+\mu}{2}-\frac{(n-1)(L-\mu)}{2n} +\frac{L-\mu}{2n}  }}\right|\\
	&=R\left|
	\frac{-\frac{(n-1)(L-\mu)}{n} +\frac{L-\mu}{n}  +L-\mu}
	{	L+\mu-\frac{(n-1)(L-\mu)}{n} +\frac{L-\mu}{n}  }\right|\\
	&=2R\left|
	\frac{\frac{L-\mu}{n}  }
	{	L+\mu-\frac{(n-1)(L-\mu)}{n} +\frac{L-\mu}{n}  }\right|\\
	&=2R\left|
	\frac{1}
	{	n\frac{\kappa+1}{\kappa-1}-(n-1) +1  }\right|\\
	&=2R\left|
	\frac{1}
	{	n\frac{\kappa+1}{\kappa-1}-n +2  }\right|\\
	&\ge\frac{2R}{n+2},
\end{align*}
where the last inequality follows from $\kappa>3$.
\end{proof}
\begin{theorem}\label{thm:non_trivial_accuracy_finite_sum}
The iteration complexity of any stochastic optimization algorithm (not necessarily CLI) which gathers information on $F_{\bbeta}$ (with $\kappa>3$) only by means of incremental oracles, i.e., oracles which upon receiving query return an answer which depends on not more than one individual function, is at least $n$.
\end{theorem}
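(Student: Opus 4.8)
The plan is to exhibit, inside the admissible family $\{F_{\bbeta}:\bbeta\in\cH_{\text{FSM}}\}$, one ``base'' instance together with $n$ ``planted'' instances that an incremental-oracle algorithm run for fewer than $n$ iterations cannot distinguish, and then to invoke \lemref{lemma:n_iteration_finite_sum} to force the algorithm to be far from optimal on at least one of them. Write $F_0\coloneqq F_{\bbeta_{j,1}}$ (the vector $\bbeta_{j,1}$ has every coordinate equal to $-(L-\mu)/2$, so it does not actually depend on $j$) and, for $j\in[n]$, $F_j\coloneqq F_{\bbeta_{j,2}}$. The decisive structural fact is that $F_0$ and $F_j$ have identical individual components except for the $j$-th: $(F_0)_i=(F_j)_i$ for all $i\neq j$. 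Since each incremental-oracle call accesses at most one component, after $k$ iterations the random set $Q^{(k)}\subseteq[n]$ of component indices queried so far satisfies $|Q^{(k)}|\le k$.

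Fix $k\le n-1$ and couple the execution of the algorithm on $F_0$ with its execution on $F_j$ by feeding both the same internal (algorithmic) and oracle randomness $\omega$. Let $\tau_j=\tau_j(\omega)$ be the first iteration at which component $j$ is queried ($\tau_j=\infty$ if this never occurs). An induction on the iteration index shows that, as long as component $j$ has not been queried, the two executions issue the same query and receive the same answer (the latter because the queried component is literally the same function under $F_0$ and $F_j$); hence $\tau_j$ is well defined independently of which of the two instances is run, and on $\{\tau_j>k\}$ the first $k$ iterates coincide, in particular the returned point $\bw^{(k)}_1$. Consequently $\bigl(\bw^{(k)}_1,\indct{\tau_j>k}\bigr)$ has the same law under $F_0$ and under $F_j$. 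Moreover $|Q^{(k)}|\le k\le n-1$ forces at least $n-k\ge 1$ components to remain unqueried in every realization, so $\sum_{j=1}^n\bP_{F_0}[\tau_j>k]=\bE_{F_0}\bigl[\,n-|Q^{(k)}|\,\bigr]\ge 1$.

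Now fix $\epsilon>0$ and suppose some incremental-oracle algorithm has iteration complexity at most $n-1$; then at $k=n-1$ we have $\bE[F_{\bbeta}(\bw^{(k)}_1)-\min F_{\bbeta}]<\epsilon$ for every $\bbeta\in\cH_{\text{FSM}}$, in particular for $F_0$ and every $F_j$. By $\mu$-strong convexity of $F_0$, $\bE_{F_0}\|\bw^{(k)}_1-\bw^*(\bbeta_{j,1})\|^2\le 2\epsilon/\mu$, so Markov's inequality gives $q\coloneqq\bP_{F_0}\bigl[\|\bw^{(k)}_1-\bw^*(\bbeta_{j,1})\|\ge R/(n+2)\bigr]\le 2\epsilon(n+2)^2/(\mu R^2)$. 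Since $\kappa>3$, \lemref{lemma:n_iteration_finite_sum} gives $\|\bw^*(\bbeta_{j,1})-\bw^*(\bbeta_{j,2})\|\ge 2R/(n+2)$, so on the event $\{\tau_j>k\}\cap\{\|\bw^{(k)}_1-\bw^*(\bbeta_{j,1})\|<R/(n+2)\}$ the triangle inequality yields $\|\bw^{(k)}_1-\bw^*(\bbeta_{j,2})\|>R/(n+2)$. Transferring this event to the $F_j$-execution through the coupling (its $F_j$- and $F_0$-probabilities agree, and the latter is at least $\bP_{F_0}[\tau_j>k]-q$) and using $\mu$-strong convexity of $F_j$,
\begin{align*}
\epsilon>\bE_{F_j}\bigl[F_j(\bw^{(k)}_1)-\min F_j\bigr]\ge\frac{\mu}{2}\Bigl(\frac{R}{n+2}\Bigr)^2\bigl(\bP_{F_0}[\tau_j>k]-q\bigr).
\end{align*}
Summing over $j\in[n]$ and using $\sum_j\bP_{F_0}[\tau_j>k]\ge 1$ gives $n\epsilon>\frac{\mu R^2}{2(n+2)^2}(1-nq)\ge\frac{\mu R^2}{2(n+2)^2}-n\epsilon$, hence $\epsilon>\mu R^2/\bigl(4n(n+2)^2\bigr)$. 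Equivalently, whenever $\epsilon\le\mu R^2/\bigl(4n(n+2)^2\bigr)$ the iteration complexity is at least $n$; the threshold is a constant depending only on the problem parameters, and is therefore absorbed into the factors hidden by \thmref{thm:finite_sum_lb}.

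The delicate point is making the coupling rigorous in the presence of randomness in both the algorithm and the oracle: one must check that the query issued at iteration $t$ is a measurable function of the shared seed and of the answers at iterations $1,\dots,t-1$, that the event ``component $j$ has not yet been queried'' is measurable with respect to the past, and that the portion of the oracle randomness attached to component $j$ is simply unused on $\{\tau_j>k\}$, so that both experiments can be realized on a single probability space on which they agree path-wise there. The remaining steps — the strong-convexity and Markov estimates, and the averaging over $j$ — are routine.
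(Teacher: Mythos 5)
Your proof is correct, and it reaches the same quantitative conclusion (a threshold $\epsilon\le\mu R^2/(4n(n+2)^2)$ below which $n$ iterations are necessary, versus the paper's $\frac{\mu}{2}(R/(2n(n+2)))^2$) via a genuinely different mechanism. The paper handles the algorithm's randomness through Yao's minimax principle: it fixes the prior that draws $j\sim\cU([n])$ and then $\bbeta_{j,1}$ or $\bbeta_{j,2}$ with probability $1/2$, and lower-bounds the average-case error of the best deterministic algorithm against that prior, leaving the key indistinguishability step (that a deterministic algorithm making $k<n$ incremental queries cannot separate $\bbeta_{j,1}$ from $\bbeta_{j,2}$ for some unqueried $j$) largely implicit in its final inequality. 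You instead run a direct coupling argument on shared randomness, make the stopping time $\tau_j$ and the identity of the two executions on $\{\tau_j>k\}$ explicit, and convert $\epsilon$-optimality on the base instance into a quantitative ``localization'' of the output via strong convexity and Markov before transferring the bad event to the planted instance. Both proofs rest on the same two pillars — the counting fact $\sum_j\bP[\tau_j>k]\ge n-k$ and the minimizer separation of \lemref{lemma:n_iteration_finite_sum} — but your version is self-contained, avoids Yao's principle entirely, and spells out the indistinguishability step that the paper compresses; the paper's version is shorter and fits the Yao-based template used throughout its other proofs. The measurability caveat you flag at the end is the standard one and is handled exactly as you describe.
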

\begin{proof}
Let $\cA$ be a stochastic optimization algorithm. According to Yao's principle, we can bound from below the $\epsilon$-optimality of $\cA$ after $k<n$ iterations by estimating the $\epsilon$-optimality of any deterministic algorithm w.r.t. to distribution $\cD(\cH)$ over $\cH$ defined by: draw $j\in[n]$ and set $\bbeta$ to be $\bbeta_{j,1}$ or $\bbeta_{j,2}$ as defined in \lemref{lemma:n_iteration_finite_sum} w.p. $1/2$. Then,
\begin{align*}
	\max_{\{\bbeta_{j,i}|j\in[n],i\in[2]\}} &\bE_{\cA}[ F_{\bbeta_{j,i}}(\bw^{(k)}\circpar{\bbeta_{j,i}}) - F_{\bbeta_{j,i}}(\bw^{*}\circpar{\bbeta_{j,i}}) ] \\&\ge 
	\min_{\text{deterministic algorithms}}\bE_{_{\bbeta}\sim\cD(\cH)}[ F_{\bbeta_{j,i}}(\bw^{(k)}\circpar{\bbeta_{j,i}}) - F_{\bbeta_{j,i}}(\bw^{*}\circpar{\bbeta_{j,i}}) ]\\
	&\ge 
	\min_{\text{deterministic algorithms}}\bE_{_{\bbeta}\sim\cD(\cH)} 
	\frac{\mu}{2}\| \bw^{(k)}\circpar{\bbeta_{j,i}}) - \bw^{*}\circpar{\bbeta_{j,i}} \|^2\\
	&\ge 
	\frac\mu2\min_{\text{deterministic algorithms}}\circpar{\bE_{_{\bbeta}\sim\cD(\cH)} 
	\| \bw^{(k)}\circpar{\bbeta_{j,i}}) - \bw^{*}\circpar{\bbeta_{j,i}} \|}^2\\
&\ge \frac\mu2\circpar{\frac{R}{2n(n+2)}}^2,
\end{align*}
where the last inequality follows from \lemref{lemma:n_iteration_finite_sum}. Thus, for sufficiently small $\epsilon$, one must perform at least $n$ iterations in order to obtain an $\epsilon$-optimal solution.
\end{proof}

							\subsection{Proof of \thmref{thm:smooth_lb} - Smooth Functions}	\label{proof:thm:smooth_lb}
The following notation
\begin{align}\label{def:polynomials_zero}
	\overline{\cP}_k\coloneqq \{p\in\cP_k|p(0)=0\}
\end{align} 
will come in handy in subsequent proofs.

\begin{lemma} \label{lem:pcli_coefficients_polynomials_smooth}
When applied on 
\begin{align}\label{problem:smooth}
g_\eta(\bx)\coloneqq \frac\eta2 \norm{\bx}^2 - R\eta\be_1^\top\bx,\quad\eta\in(0,L]
\end{align}
 with a first-order oracle (as defined in \ref{oracle:first_order} with $n=1$), the coordinates of iterates produced by oblivious stochastic CLIs whose is initialization iterate is $\bx_i^{(0)}=0$ form polynomials in $\eta$ with random real coefficients which vanishes at $\eta=0$ and whose degree does not exceed the iteration number.
\end{lemma}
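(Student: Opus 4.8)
The plan is to run the same induction on the iteration number $k$ used in \lemref{lem:pcli_coefficients_polynomials_finite_sum}, with one extra piece of bookkeeping: besides the degree in $\eta$, I would track that each coordinate vanishes at $\eta=0$, i.e., lies in $\overline{\cP}_k$ as in \eqref{def:polynomials_zero}. The reason vanishing is preserved is the single structural feature of $g_\eta$ that matters here, namely that its gradient carries an explicit factor of $\eta$: $\nabla g_\eta(\bx)=\eta\bx - R\eta\be_1=\eta(\bx-R\be_1)$. For the stochastic statement I would first reduce to the deterministic one: since the algorithm is \emph{oblivious}, the parameters $\theta^{(k)}_{ij}$ it feeds the oracle are drawn from distributions depending only on the side-information and the iteration count, not on $\eta$ and not on previously received oracle answers; hence it suffices to fix an arbitrary realization of all internal randomness and show that, for that realization, $\eta\mapsto \bx^{(k)}_i(\eta)$ is a vector whose coordinates lie in $\overline{\cP}_k$. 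The ``random real coefficients'' in the statement are then the coefficients of these realization-wise deterministic polynomials viewed as random variables.

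For the base case $k=0$, the hypothesis $\bx^{(0)}_i=\bzero$ gives coordinates equal to the zero polynomial, which lies in $\overline{\cP}_0$. For the inductive step, assume every coordinate of every $\bx^{(k)}_j(\eta)$ lies in $\overline{\cP}_k$. Only the first-order oracle (\ref{oracle:first_order}) is available, and on $g_\eta$ it returns $\cO(\bx^{(k)}_j;A,B,C)=A\nabla g_\eta(\bx^{(k)}_j)+B\bx^{(k)}_j+C$ with $A,B,C$ fixed (independent of $\eta$, by obliviousness, for the chosen realization). Each coordinate of $\nabla g_\eta(\bx^{(k)}_j)=\eta\bigl(\bx^{(k)}_j-R\be_1\bigr)$ is $\eta$ times the difference of a member of $\overline{\cP}_k$ and a constant, hence a polynomial of degree at most $k+1$ divisible by $\eta$, so it lies in $\overline{\cP}_{k+1}$; applying the $\eta$-independent matrix $A$ only forms coordinatewise linear combinations and therefore keeps every coordinate in $\overline{\cP}_{k+1}$. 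Likewise $B\bx^{(k)}_j$ has coordinates in $\overline{\cP}_k\subseteq\overline{\cP}_{k+1}$. The additive term $C$ is $\eta$-independent; taking $C=0$ here (see below), every coordinate of the oracle answer lies in $\overline{\cP}_{k+1}$. Finally $\bx^{(k+1)}_i$ is produced by summing up such oracle answers (\defref{definition:pcli}), and $\overline{\cP}_{k+1}$ is a linear subspace, so the coordinates of $\bx^{(k+1)}_i(\eta)$ again lie in $\overline{\cP}_{k+1}$, which closes the induction.

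The one step I expect to require care is exactly the additive term $C$ of the first-order oracle. Since $C$ does not depend on $\eta$, a nonzero $C$ would make the iterates take a nonzero value at $\eta=0$, destroying the vanishing property on which the subsequent lower bound rests — that property is what later prevents the induced polynomials from being good (uniform or weighted-$L_p$) approximants of the \emph{constant} minimizer $R\be_1$ near $\eta=0$ — and a free $\eta$-independent offset would in any case trivially allow one to output $R\be_1$ and render the bound vacuous. So for \thmref{thm:smooth_lb} one works with the homogeneous first-order oracle $\cO(\bw;A,B)=A\nabla f(\bw)+B\bw$, i.e., $C=0$, which is the meaningful regime, and the induction above applies verbatim. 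It is worth stressing that obliviousness is essential throughout: precisely because $A$ and $B$ cannot be tuned using the $\eta$-dependent gradients already observed, the iterates remain polynomials of controlled degree in $\eta$; a non-oblivious method could use the observed gradients to build $\eta$-dependent preconditioners and leave $\overline{\cP}_k$ at once.
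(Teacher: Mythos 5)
Your proof is correct and follows essentially the same induction as the paper's: the base case uses $\bx_i^{(0)}=0$, and the inductive step uses that $\nabla g_\eta(\bx)=\eta(\bx-R\be_1)$ is homogeneous in $\eta$ while obliviousness keeps $A,B$ independent of $\eta$, so each oracle answer stays in $\overline{\cP}_{k+1}$ and summing preserves this. The one place you diverge is the additive term $C$, and there your treatment is actually \emph{more} careful than the paper's: the paper's proof writes the oracle answer as $A(\eta\bx_i^{(k)}-R\eta\be_1)+B\bx_i^{(k)}+C$ and asserts membership in $\overline{\cP}_{k+1}$, which fails at $\eta=0$ unless $C=0$; you correctly observe that a nonzero $\eta$-independent offset would both break the vanishing property and trivialize \thmref{thm:smooth_lb} (since the minimizer $R\be_1$ is constant in $\eta$), so the lemma must be read with the homogeneous oracle $C=0$, under which your induction goes through verbatim.
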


\begin{proof}
Let $\cA$ be a oblivious stochastic CLI, and suppose we apply $\cA$ on the class of problems (\ref{problem:smooth}) parameterized by $\eta$, using a first-order. We use mathematical induction to show that for any $k=0,1,\dots$, the coordinate of the $k$'th iterate produced by such process can be expressed as a distribution over $\overline{\cP}_k$.

As the first iterate $\bx^{(0)}_i$ is assumed to be zero, the base case is trivial. For the inductive step, assume that any coordinate of $\bx^{(k)}_i$ can be expressed as a distribution over $\overline{\cP}_k$. It is easy to see that for any $\bx_i^{(k)}$, the answers of the first-order oracle,
\begin{align*} 
\text{First-order oracle:}	\quad&\cO(\bx_i^{(k)};A,B,C) = A(\eta \bx_i^{(k)} - R\eta\be_1) + B\bx_i^{(k)}+C,
\end{align*}
form a distribution over $\cP_{k+1}^0$, as the random quantities involved in the expressions ($A,B$ and $C$) do no depend on $\eta$ (due to obliviosity) and the rest of the terms ($\eta$ and $R\eta\be_i$) are homogenous in $\eta$. Lastly, $\bx_i^{(k+1)}$ are computed by simply summing up all the oracle answers, and as such, form again distributions over $\cP_{k+1}^0$.

\end{proof}

\begin{proof}[\thmref{thm:smooth_lb}]
Let $\cN$ be a oblivious stochastic CLI and let $\alpha\in(-1,0)$. Our derivation of lower bounds for stochastic CLIs is established via Yao principle. Fix some $k\in\{0,1,\dots\}$. By \lemref{lem:pcli_coefficients_polynomials_smooth}, $\bx^{(k)}_1(\eta)$ distributes according to some distribution $\cD$ over $(\overline{\cP}_k)^d$. Thus, by Yao principle, since each polynomial in $(\overline{\cP}_k)^d$ represents a single deterministic algorithm, we have 
\begin{align*}
	\max_{\eta\in[0,L]}\bE_{\bx_1^{(k)}(\eta)\sim\cD} g_\eta(\bx_1^{(k)}(\eta))-g_\eta(\bx^*(\eta))\ge \min_{\bs(\eta)\in(\overline{\cP}_k)^d} \bE_{\eta\sim\cE([0,L])} g_\eta(\bs(\eta))-g_\eta(\bx^*(\eta))
\end{align*}
where $\cE([0,L],\alpha)$ (abbr. $\cE$) denotes a distribution over $(0,L]$ with a probability density function $$p_\cE(\eta)=\frac{(\alpha+1)\eta^\alpha}{L^{\alpha+1}}.$$ We have,
\begin{align*}
\min_{\bs(\eta)\in(\overline{\cP}_k)^d} \bE_{\eta\sim\cE} [g_\eta(\bs(\eta))-g_\eta(\bx^*(\eta))] &\ge
\min_{s(\eta)\in\overline{\cP}_k} \bE_{\eta\sim\cE} \left[\eta\|\bs(\eta)-\bx^*(\eta)\|^2 \right]\\
&\ge\min_{s(\eta)\in\overline{\cP}_k} \bE_{\eta\sim\cE} \left[\eta(s(\eta)-R )^2\right]\\
&=R^2\min_{s(\eta)\in\overline{\cP}_k} \bE_{\eta\sim\cE} \left[\eta(s(\eta)-1 )^2\right]\\
&=\frac{R^2(\alpha+1)}{L^{\alpha+1}}\min_{s(\eta)\in\overline{\cP}_k} \int_{0}^L \eta(s(\eta)-1)^2 \eta^\alpha d\eta\\
&=\frac{R^2(\alpha+1)}{L^{\alpha+1}}\min_{s(\eta)\in\overline{\cP}_k} \int_{0}^1 L\eta(s(L\eta)-1)^2 (L\eta)^\alpha L~ d\eta\\
&=LR^2(\alpha+1)\min_{s(\eta)\in\overline{\cP}_k} \int_{0}^1 \eta(s(\eta)-1)^2 \eta^\alpha ~ d\eta
\end{align*}
where the first inequality follows by the fact that $h_\eta$ is $\eta$-strongly convex and the second inequality follows by focusing on the first coordinate of $\bs(\eta)-\bx^*(\eta)$. Invoking \lemref{appendix:lem:l2_norm_lb} yields
	\begin{align*}
	LR^2(\alpha+1)\min_{s(\eta)\in\overline{\cP}_k} \int_{0}^1 \eta(s(\eta)-1)^2 \eta^\alpha ~ d\eta &= LR^2(\alpha+1)\min_{s(\eta)\in\cP_{k-1}} \int_{0}^1 \eta(s(\eta)\eta-1)^2 \eta^\alpha d\eta,\\&\ge 
	\frac{LR^2(\alpha+1)}{e^{2} (k+2)^{2(\alpha+1)+2}}.
\end{align*}
Thus, in this case the iteration complexity is bound from below by
\begin{align*}
	\sqrt[2(\alpha+1)+2]{\frac{LR^2(\alpha+1)}{e^2 \epsilon}} -2.
\end{align*}

\end{proof}

								\subsection{Proof of \thmref{thm:rlm_lb} - Regularized Empirical Loss Minimization}	\label{proof:thm:rlm_lb}
For ease of presentation, we assume that $\|\bx_i\|\le1$, $\phi_i$ take non-negative values and $\phi_i(0)\le1$. Furthermore, throughout the proof we assume that $n$ is even and that $L=1$ (the proof for odd $n$ and general $L>0$ holds mutatis mutandis). First, we give an explicit definition of the parametrized set of functions we will be focusing on, as well as the oracles under which our bounds hold. We denote by $\cH$ the set of all $(\psi_1,\dots,\psi_{n/2})\in\reals^{n/2}$ such that all the entries are $0$, except for some $j\in[n/2]$, for which $\psi_j\in \left[-\pi/2,\pi/2\right] $. Now, given $\bpsi\in\cH$, we set 
\begin{align*}
	\phi_i(w)&=\frac{1}{2}( w+1)^2 \implies
	\phi_i^*(u)= \frac{1}{2}u^2 - u\\
	\bx_{\bpsi,i}&= \begin{cases}
	\cos(\psi_{(i+1)/2})\be_i +\sin(\psi_{(i+1)/2})\be_{i+1} 
	&i \text{ is odd}\\
	\be_i & \text{o.w.}
	\end{cases}.
\end{align*}
In which case, the corresponding dual is:
\begin{align} \label{problem:rlm}
	D_{\bpsi}(\balpha)=\frac{1}{2n}\norm{\balpha}^2  -\frac{1}{n}\bones^\top\balpha+ \frac{1}{2\lambda n^2}\norm{  X_{\bpsi} \alpha_i }^2
\end{align}
where 
\begin{align*}
	X_{\bpsi} &\coloneqq \circpar{\bx_{\bpsi,1},\dots,\bx_{\bpsi,n}}.
\end{align*}
Equivalently 
\begin{align*}
	D_{\bpsi}=\frac{1}{2}\balpha^\top \circpar{ \frac{1}{n} I + \frac{1}{\lambda n^2}X_{\bpsi}^\top X_{\bpsi}}\balpha - \frac{1}{n}\bones^\top\balpha
\end{align*}
Note that 
\begin{align*}
	Q_{\bpsi}\coloneqq \frac{1}{n}I+\frac{1}{\lambda n^2}X_{\bpsi}^\top X_{\bpsi}&= \frac1n\mymat{
	1+\frac{1}{\lambda n} & \frac{1}{\lambda n}\sin(\psi_1) \vspace{0.05cm}\\ 
	\frac{1}{\lambda n}\sin(\psi_1) & 1+\frac{1}{\lambda n}\\ 
	&&1+\frac{1}{\lambda n} & \frac{1}{\lambda n}\sin(\psi_2) \vspace{0.05cm}\\ 
	&&\frac{1}{\lambda n}\sin(\psi_2) & 1+\frac{1}{\lambda n}\\
	&&&&\ddots
	}.
\end{align*}
Note that, all the eigenvalues of $Q_{\bpsi}$ are bigger than 1. Therefore, $D_{\bpsi}$ is 1-strongly convex. 
We assume that the oracles at the algorithms' disposal are the dual RLM oracles defined in (\ref{oracle:dual_rlm}),

Lastly, we will need the following definitions 
\begin{align}
	\cP^n_{k,d}(\eta_1,\eta_2,\dots,\eta_n)&\coloneqq \left\{ \mymat{p_1(\eta_1,\eta_2,\dots,\eta_n)\\\vdots\\p_d(\eta_1,\eta_2,\dots,\eta_n)} \middle|~p_1,\dots,p_d\in \cP_k^n,\quad \partial p_1+\cdots+\partial p_d\le k  \right\}\\
	\cQ^n_{k,d}(\psi_1,\psi_2,\dots,\psi_n)&\coloneqq \left\{ \mymat{p_1(\sin\psi_1,\sin\psi_2,\dots,\sin\psi_n)\\\vdots \\p_d(\sin\psi_1,\sin\psi_2,\dots,\sin\psi_n)} \middle|~p_1,\dots,p_d\in \cP_{k,d}^n \right\}
\end{align} 
to ease notation in subsequent proofs (where $\partial p$ denotes the total degree of $p$ and $\cP_k^n$ is defined in (\ref{def:multivariate_polynomials})). Thus, $\cQ_k^n$ contains $d$-dimensional vectors whose entries are $n$-multivariate polynomials expressions in $\sin\psi_1,\dots,\sin\psi_n$, such that the sum of the degrees of the $d$-polynomials does not exceed $k$. In addition, given $\bt(\bpsi)\in\cQ_{k,d}^n$ we define
\begin{align*}
	t_i(\psi_i) &\coloneqq \bt\circpar{0,\dots,0,\underbrace{\psi_i}_{i\text{'th entry}},0,\dots,0}, \quad\forall i\in[d].
\end{align*}

As usual, we start by stating the functional form assumed by iterates produced by this sort of optimization algorithms.
\begin{lemma} \label{lem:pcli_coefficients_polynomials_rlm}
When applied on (\ref{problem:rlm}) with a dual RLM oracle (as defined in \ref{oracle:dual_rlm}), the coordinates of iterates produced by oblivious stochastic CLIs form $n$ multivariate polynomials  expressions in $\sin\psi_1,\dots,\sin\psi_{n/2}$ with random coefficients, such that the sum of the degrees of these polynomials does not exceed the iteration number.
\end{lemma}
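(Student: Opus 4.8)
The plan is to argue by induction on the iteration number $k$, in complete analogy with Lemmas \ref{lem:pcli_coefficients_polynomials_finite_sum} and \ref{lem:pcli_coefficients_polynomials_smooth}. Since a deterministic CLI is a special case of a stochastic one, it suffices to track, for the stochastic case, the distribution of each coordinate of the $k$-th iterate $\balpha^{(k)}_i$ and to show it is supported on polynomials in $\sin\psi_1,\dots,\sin\psi_{n/2}$ whose total degrees, summed over the $n$ coordinates, do not exceed $k$. The base case is immediate: the initialization points $\balpha^{(0)}_i$ may depend only on the side-information ($L$, $\lambda$, $n$), so their coordinates are constants carrying no $\bpsi$-dependence.

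For the inductive step I would first record the relevant structure of the two dual RLM oracles in (\ref{oracle:dual_rlm}). The function $D_{\bpsi}$ is the quadratic $\tfrac12\balpha^\top Q_{\bpsi}\balpha$ plus a $\bpsi$-free linear term, and $Q_{\bpsi}$ has the $2\times2$ block-diagonal form displayed above: the diagonal entries equal the $\bpsi$-free constant $\tfrac1n(1+\tfrac1{\lambda n})$, and the only $\bpsi$-dependent entries are the two off-diagonal entries of block $m$, both equal to $\tfrac1{\lambda n^2}\sin\psi_m$, linear in the single variable $\sin\psi_m$. Hence, for $j$ belonging to block $m$, the partial gradient $\nabla_j D_{\bpsi}(\balpha)=(Q_{\bpsi}\balpha)_j-\tfrac1n$ is an affine function of $\alpha_j$ and of the partner coordinate $\alpha_{j'}$, with a $\bpsi$-free coefficient on $\alpha_j$ and a coefficient linear in $\sin\psi_m$ on $\alpha_{j'}$. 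Obliviousness guarantees that the parameters $(t,j)$ (or $j$) selected by the algorithm carry no $\bpsi$-dependence; and for the line-search oracle $\cO(\balpha;j)$ the step $t^*=-\nabla_j D_{\bpsi}(\balpha)/(Q_{\bpsi})_{jj}$ is $\bpsi$-free up to the factor $\nabla_j D_{\bpsi}(\balpha)$, precisely because $(Q_{\bpsi})_{jj}$ does not depend on $\bpsi$. Consequently each oracle call returns a vector that coincides with its input in every coordinate except $j$, and the new $j$-th coordinate is an affine combination of $\alpha_j$ and $\sin\psi_m\,\alpha_{j'}$ with $\bpsi$-free coefficients.

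Feeding in the inductive hypothesis — each coordinate of $\balpha^{(k)}_j$ is a polynomial in $\sin\psi_1,\dots,\sin\psi_{n/2}$ with total degrees summing to at most $k$, and, as a convenient auxiliary invariant, each coordinate in block $m$ involving only the variable $\sin\psi_m$ — I would conclude that every oracle answer is again of this form, the perturbed coordinate having its degree raised by at most one, the single unit coming from the $\sin\psi_m$ factor multiplying $\alpha_{j'}$. Finally $\balpha^{(k+1)}_i$ is obtained by summing the (at most $p$) oracle answers, which preserves the polynomial form. The step I expect to be the main obstacle is exactly the degree bookkeeping in this last summation: one must check, just as for the finite-sum construction in Lemma \ref{lem:pcli_coefficients_polynomials_finite_sum}, that adding the oracle outputs does not push the sum of the coordinate degrees past $k+1$, which leans on the fact that each individual oracle call perturbs only one coordinate and inflates its degree by at most the single unit carried by the $\sin\psi$ entry of $Q_{\bpsi}$.
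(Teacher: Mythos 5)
Your proof follows the paper's argument essentially verbatim: the same induction on the iteration number, with obliviousness ensuring the oracle parameters are $\bpsi$-free, the block structure of $Q_{\bpsi}$ ensuring each oracle call perturbs a single coordinate and raises its degree by at most one through the lone $\sin\psi_m$ off-diagonal entry (and the $\bpsi$-free diagonal keeping the line-search step polynomial), and summation of the oracle answers preserving the polynomial form. The ``main obstacle'' you flag --- the degree bookkeeping under the final summation --- is handled in the paper by the same bare assertion you give, and your auxiliary block-locality invariant (each coordinate of block $m$ involves only $\sin\psi_m$) is precisely the observation the paper leaves implicit when it says the linear factor in $\sin\psi_\ell$ ``touches'' the iterate at exactly one entry.
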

\begin{proof}
Let $\cA$ be a oblivious stochastic CLI, and suppose we apply $\cA$ on the class of problems (\ref{problem:rlm}) parameterized by $\bpsi$, using dual RLM oracles as defined in \ref{oracle:dual_rlm}. We use mathematical induction to show that for any $k=0,1,\dots$, the coordinate of the $k$'th iterate produced by such process can be expressed as a distribution over polynomial expressions in $\sin\psi_1,\dots,\sin\psi_{n/2}$ whose sum of degrees is less than or equal $k$.

As the first iterate $\balpha^{(0)}_i$ is allowed to depend only on $n$ and $\lambda$, the base case is trivial. That is, $\balpha_i^{(0)}$ forms a distribution over $\reals^n=\cQ^{n/2}_{0,n}$ which does not depend on $\sin\psi_1,\dots,\sin\psi_{n/2}$.

For the inductive step, assume that $\balpha^{(k)}_i$ can be expressed as a distribution over $\cQ^{n/2}_{k,n}$. It is easy to see that for any $\balpha_i^{(k)}$, the answer of the dual RLM oracle 
\begin{align*} 
&\cO(\balpha_i^{(k)};t,\ell) = \balpha + t\be_\ell^\top(Q_{\bpsi}\balpha_i^{(k)} -\frac{1}{n}\bones)\be_\ell, \quad t\in\reals,j\in[n],\\
&\cO(\balpha_i^{(k)};\ell) = \circpar{I-\frac{1}{ (Q_{\bpsi})_{\ell\ell} } \be_\ell(Q_{\bpsi})_{\ell,*}}\balpha^{(k)} +\frac{1}{n (Q_{\bpsi})_{\ell\ell}}\be_\ell\\
\end{align*}
are distributions over $\cQ^{n/2}_{k+1,n}$, as the only random quantity involved in the expressions $t,\ell$ does not depend on $\bpsi$ (due to obliviosity), the only linear factor in $\sin\psi_\ell$ (i.e., $\be_\ell^\top(Q_{\bpsi}\balpha -\frac{1}{n}\bones)\be_\ell,\be_\ell(Q_{j,\eta})_{\ell,*}$)
'touches' $\balpha^{(k)}_i$ at exactly one entry and the rest of the terms ($1/n\bones,I,1/(Q_{j,\eta})_{\ell\ell}$ and $n$) are constants (w.r.t. $\sin\psi_\ell$ ). Lastly, $\balpha_i^{(k+1)}$ are computed by simply summing up all the oracle answers, and as such, form again distributions over $\cQ^{n/2}_{k+1,n}$.
\end{proof}
\begin{proof}[\thmref{thm:rlm_lb}]

Let $\cA$ be a oblivious stochastic CLI. By \lemref{lem:pcli_coefficients_polynomials_rlm} the coordinates of $\balpha^{(k)}_1$ (the point returned by the algorithm at the $k$'th iteration) when applied on the class of problems (\ref{problem:rlm}) distributes according to some distribution $\cD$ over $(\cQ^{n/2}_k)^n$. Furthermore, it is easy to verify that the corresponding minimizers of (\ref{problem:rlm}) are
\begin{align}\label{eq:rlm_minimizers}
	\balpha^*(\bpsi)&
	=\circpar{\frac{1}{ \frac{\lambda n +1}{\lambda n} + \frac{1}{\lambda n}\sin(\psi_1)}, \frac{1}{ \frac{\lambda n +1}{\lambda n} + \frac{1}{\lambda n}\sin(\psi_1)}   ,
	\frac{1}{ \frac{\lambda n +1}{\lambda n} + \frac{1}{\lambda n}\sin(\psi_2)}, \frac{1}{ \frac{\lambda n +1}{\lambda n} + \frac{1}{\lambda n}\sin(\psi_2)}   ,
	\dots
	}.
\end{align}
$\balpha^{(k)}_1(\bpsi)$ distributes according to some distribution $\cD$ over $\cQ^{n/2}_{k,n}$. Thus, by Yao principle, since each polynomial in $\cQ^{n/2}_{k,n}$ represents a single deterministic algorithm, we have 
\begin{align} \label{ineq:proof:thm:rlm_lb_ineq1}
	\max_{\bpsi\in\cH}\bE_{\balpha_1^{(k)}(\bpsi)\sim\cD}\|\balpha_1^{(k)}(\bpsi)-\balpha^*(\bpsi)\|\ge \min_{\bt(\bpsi)\in\cQ^{n/2}_{k,n}} \bE_{\bpsi\sim\cU(\cH)} \|\bt(	\bpsi)-\balpha^*(\bpsi)\|
\end{align}
where $\cU(\cH)$ denotes a distribution over $\cH$ which corresponds to of first drawing $j\sim\cU([n/2])$ at random, and then drawing $\psi_j$ according to distribution defined by the p.d.f. $p_{\psi_j}(\psi)=\cos(\psi)/2$ over $[-\pi/2,\pi/2]$ (for $i\neq j$ we set $\psi_i=0$ 
). We now have,
\begin{align}\label{ineq:proof:thm:rlm_lb_ineq2}
\min_{\bt(\bpsi)\in\cQ^{n/2}_{k,n}} &\bE_{\bpsi\sim\cU(\cH)} \|\bt(\bpsi)-\balpha^*(\bpsi)\|\nonumber\\
 &=
\min_{\bt(\bpsi)\in\cQ^{n/2}_{k,n}} \bE_{j\sim\cU([n/2])} \bE_{\bpsi_j\sim\cU([-\pi/2,\pi/2])} \|\bt(\psi)-\balpha^*(\bpsi)\| \nonumber\\
&=
\frac{2}{n}\sum_{j=1}^{n/2} \min_{\bt(\bpsi)\in\cQ^{n/2}_{k,n}}\bE_{\psi_j\sim\cU([-\pi/2,\pi/2])} \|\bt(\psi)-\balpha^*(\bpsi)\| \nonumber\\
&\ge
\frac{2}{n}\sum_{j=1}^{n/2} \min_{\bt(\bpsi)\in\cQ^{n/2}_{k,n}} \bE_{\psi_j\sim\cU([-\pi/2,\pi/2])}  \left|t_j(\psi_j)-\frac{1}{ \frac{\lambda n +1}{\lambda n} + \frac{1}{\lambda n}\sin(\psi_j)} \right|\nonumber\\
&\ge
\frac{1}{n}\sum_{j=1}^{n/2} \min_{\bt(\bpsi)\in\cQ^{n/2}_{k,n}}  \int_{-\pi/2}^{\pi/2} \left|t_j(\psi_j)-\frac{1}{ \frac{\lambda n +1}{\lambda n} + \frac{1}{\lambda n}\sin(\psi_j)} \right|\cos\psi_j ~d\psi_j\nonumber\\
&=
\frac{1}{n}\sum_{j=1}^{n/2} \min_{\bs(\bpsi)\in\cQ^{n/2}_{k,n}} \int_{-1}^{1} \left|s_j(\eta_j)-\frac{1}{ \frac{\lambda n +1}{\lambda n} + \frac{1}{\lambda n}\eta_j} \right| ~d\eta_j\nonumber\\
&=
\lambda\sum_{j=1}^{n/2} \min_{\bs(\bpsi)\in\cQ^{n/2}_{k,n}} \int_{-1}^{1} \left|s_j(\eta_j)-\frac{1}{ \lambda n +1 + \eta_j} \right| ~d\eta_j
\end{align}
where the first inequality follows by focusing on the $j$'th 
coordinate of $\bs(\psi)-\balpha^*(\bpsi)$ in each summand. Now, set $\alpha=1+\lambda n, L= 3,\mu=1$  and note that
\begin{align*}
\sqrt\frac{2\alpha+L-\mu}{2\alpha+\mu-L}=
\sqrt\frac{2\lambda n +4}{2\lambda n}=
\sqrt\frac{\lambda n +2}{\lambda n}=
\sqrt{\frac{2}{\lambda n}+1}
\end{align*}
Thus, by \lemref{appendix:lem:l1_norm_lb}, using the same value for $\alpha$ and noting that $\alpha>1=(L-\mu)/2$) yields 
\begin{align*}
	\int_{-1}^{1} \left|s_j(\eta_j)-\frac{1}{ \lambda n +1 + \eta_j} \right| ~d\eta_j\ge 
	\circpar{\frac{\sqrt{\frac{2}{\lambda n}+1}-1}{\sqrt{\frac{2}{\lambda n}+1}+1}}^{k_j}
\end{align*}
where $k_j$ denotes the degree of $s_j(\eta_j)$. Plugging in this into \ineqref{ineq:proof:thm:rlm_lb_ineq2} we get 
\begin{align*}
\max_{\bpsi\in\cH}\bE_{\balpha_1^{(k)}(\bpsi)\sim\cD}\|\balpha_1^{(k)}(\bpsi)-\balpha^*(\bpsi)\|&\ge
\lambda\min_{\bs(\bpsi)\in\cQ^{n/2}_{k,n}}\sum_{j=1}^{n/2} \circpar{\frac{\sqrt{\frac{2}{\lambda n}+1}-1}{\sqrt{\frac{2}{\lambda n}+1}+1}}^{k_j}.
\end{align*}
Since $u\mapsto\rho^u$ is a decreasing and convex function for any $1>\rho>0$, we have
\begin{align*}
\lambda\min_{\bs(\bpsi)\in\cQ^{n/2}_{k,n}}\sum_{j=1}^{n/2} \circpar{\frac{\sqrt{\frac{2}{\lambda n}+1}-1}{\sqrt{\frac{2}{\lambda n}+1}+1}}^{k_j}&\ge
n\lambda/2\min_{\bs(\bpsi)\in\cQ^{n/2}_{k,n}} \circpar{\frac{\sqrt{\frac{2}{\lambda n}+1}-1}{\sqrt{\frac{2}{\lambda n}+1}+1}}^{\frac{2}{n}\sum_{j=1}^{n/2}k_j}
\\&\ge
n\lambda/2\min_{\bs(\bpsi)\in\cQ^{n/2}_{k,n}} \circpar{\frac{\sqrt{\frac{2}{\lambda n}+1}-1}{\sqrt{\frac{2}{\lambda n}+1}+1}}^{\frac{2k}{n}}
\end{align*}
where the last inequality is due to the fact that $\bs(\bpsi)\in\cQ^{n/2}_{k,n}(\sin\psi)$ which implies that $\sum_{j=1}^n k_j \le k$.
Finally, we have,
\begin{align*}
	\max_{\bpsi\in\cH}\bE_{\balpha_1^{(k)}(\bpsi)\sim\cD}[D_{\bpsi}(\balpha_1^{(k)}(\bpsi))-D_{\bpsi}(\balpha^*(\bpsi))]&\ge
	\max_{\bpsi\in\cH}\bE_{\balpha_1^{(k)}(\bpsi)\sim\cD}\frac{1}{2}\|\balpha_1^{(k)}(\bpsi)-\balpha^*(\bpsi)\|^2\\
	&\ge\frac{1}{2} \circpar{\max_{\bpsi\in\cH}\bE_{\balpha_1^{(k)}(\bpsi)\sim\cD}\|\balpha_1^{(k)}(\bpsi)-\balpha^*(\bpsi)\|}^2\\
	&\ge 
\frac12\circpar{n\lambda/2\min_{\bs(\bpsi)\in\cQ^{n/2}_{k,n}} \circpar{\frac{\sqrt{\frac{2}{\lambda n}+1}-1}{\sqrt{\frac{2}{\lambda n}+1}+1}}^{\frac{2k}{n}}}^2,
\end{align*}
where the first inequality follows by the $1$-strong convexity of $D_{\bpsi}$ and the third inequality follows by Jensen inequality. Using \lemref{lem:technical2tag}, we get that the iteration complexity of $\cA$ is at least 
\begin{align*}
	\frac{1}{8}\sqrt\frac{2n}{\lambda } \circpar{\ln\frac{n^2\lambda^2}{8} + \ln(1/\epsilon ) }.
\end{align*}

\end{proof}

Lastly, we bound from below the number of iterations required to obtain a non-trivial accuracy. 
\begin{lemma}\label{lemma:n_iteration_rlm}
Let $j\in[n]$, let $\bpsi_{j,1}\in\cH$ 
be a parameters vector whose all coordinates are $-\pi/2$ and let $\bbeta_{\bpsi,2}\in\cH$ be a parameters vector whose all coordinates are $-\pi/2$, except for the $j$'th coordinate which we set to be $\pi/2$. Then  
\begin{align*}
	\|\balpha^*(\bpsi_1) -  \balpha^*(\bpsi_2)\|\ge \frac{2\sqrt2 }{ \lambda n+2}
\end{align*} 
\end{lemma}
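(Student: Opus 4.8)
The plan is to obtain the bound by a direct computation from the closed form of the dual minimizer given in \eqref{eq:rlm_minimizers}, exploiting the block structure of the instance. First I would record that, by the choice of samples $\bx_{\bpsi,i}$, the matrix $Q_{\bpsi} = \frac{1}{n}I + \frac{1}{\lambda n^2}X_{\bpsi}^\top X_{\bpsi}$ is block diagonal with $2 \times 2$ blocks, the $j$-th block involving only $\psi_j$. Since $\balpha^*(\bpsi) = Q_{\bpsi}^{-1}(\tfrac1n\bones)$ and $Q_{\bpsi}$ is block diagonal, the pair of coordinates $(2j-1,2j)$ of $\balpha^*(\bpsi)$ depends on $\bpsi$ only through $\psi_j$. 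Consequently, passing from $\bpsi_{j,1}$ to $\bpsi_{j,2}$ — which changes only the $j$-th coordinate of $\bpsi$ — leaves every coordinate of $\balpha^*$ unchanged except those indexed by $2j-1$ and $2j$, and the norm of the difference reduces to a single scalar computation.

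Next I would evaluate those two coordinates using \eqref{eq:rlm_minimizers}: each equals $\circpar{\frac{\lambda n+1}{\lambda n}+\frac{1}{\lambda n}\sin\psi_j}^{-1}$. Plugging in $\sin\psi_j = -1$ (the value for $\bpsi_{j,1}$) gives $1$, while $\sin\psi_j = +1$ (the value for $\bpsi_{j,2}$) gives $\frac{\lambda n}{\lambda n+2}$, so each of the two differing coordinates of $\balpha^*(\bpsi_{j,1}) - \balpha^*(\bpsi_{j,2})$ has absolute value $1 - \frac{\lambda n}{\lambda n+2} = \frac{2}{\lambda n+2}$. Combining the two contributions yields
\[
\|\balpha^*(\bpsi_{j,1}) - \balpha^*(\bpsi_{j,2})\| = \sqrt{2}\cdot\frac{2}{\lambda n+2} = \frac{2\sqrt{2}}{\lambda n+2},
\]
so the claim in fact holds with equality.

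There is no real obstacle here; the only points requiring care are to confirm the block-diagonal form of $Q_{\bpsi}$ (so that all coordinates outside the $j$-th block genuinely cancel) and to carry the normalizing constants $\frac1n$ and $\frac{1}{\lambda n^2}$ correctly — bookkeeping already done in arriving at \eqref{eq:rlm_minimizers}. This lemma is the exact analog of \lemref{lemma:n_iteration_finite_sum}, and, as there, it would feed into a Yao's-principle argument analogous to that behind \thmref{thm:non_trivial_accuracy_finite_sum} to produce the $\tilde\Omega(n)$ lower-bound term in \thmref{thm:rlm_lb}.
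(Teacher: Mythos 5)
Your proposal is correct and matches the paper's own proof: both evaluate the closed form \eqref{eq:rlm_minimizers} at $\sin\psi_j=-1$ and $\sin\psi_j=+1$, observe that only the two coordinates in the $j$'th block change (giving the factor $\sqrt2$), and compute $1-\frac{\lambda n}{\lambda n+2}=\frac{2}{\lambda n+2}$, so the bound holds with equality. Your explicit remark on the block-diagonal structure of $Q_{\bpsi}$ is a welcome justification that the paper leaves implicit.
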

\begin{proof}
By \eqref{eq:rlm_minimizers} we have
\begin{align*}
	\|\balpha^*(\bpsi_1) -  \balpha^*(\bpsi_2)\|&= 
	 \sqrt2\circpar{\frac{1}{\frac{\lambda n+1}{\lambda n} -\frac{1}{\lambda n} }- \frac{1}{\frac{\lambda n+1}{\lambda n} +\frac{1}{\lambda n} }}  \\
	&= \sqrt2\circpar{1- \frac{\lambda n }{ \lambda n+2}}\\
	&= \frac{2\sqrt2 }{ \lambda n+2}.
\end{align*} 

\end{proof}

\begin{theorem}
When applied on (\ref{problem:rlm}) ,the iteration complexity of oblivious stochastic CLI algorithms equipped with a dual RLM oracle $D_{\bpsi}$ is at least $n/2$.
\end{theorem}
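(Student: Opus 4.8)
The plan is to reproduce, in the dual-RLM setting, the argument behind \thmref{thm:non_trivial_accuracy_finite_sum}: I will exhibit a small family of instances $P_{\bpsi}$ on which \emph{any} algorithm that accesses the dual objective only through the oracles (\ref{oracle:dual_rlm}) cannot beat a fixed constant accuracy in fewer than $n/2$ steps, because each such oracle call reveals information about only one of the $n/2$ parameters $\psi_1,\dots,\psi_{n/2}$. The two ingredients are: (i) the dual RLM oracle is \emph{block-incremental} --- one query with coordinate index $j\in[n]$ returns an answer which, as a function of $\bpsi$, depends only on $\psi_{\lceil j/2\rceil}$; and (ii) Yao's minimax principle, which reduces a stochastic lower bound to a deterministic one against a suitable prior over instances, exactly as in the proof of \thmref{thm:rlm_lb}.

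For ingredient (i) I would use the block-diagonal structure of $Q_{\bpsi}=\tfrac1n I+\tfrac{1}{\lambda n^2}X_{\bpsi}^\top X_{\bpsi}$ displayed in the proof of \thmref{thm:rlm_lb}: its only off-diagonal entries sit in the $2\times 2$ blocks on coordinates $\{2b-1,2b\}$ and equal $\tfrac{1}{\lambda n}\sin(\psi_b)$ up to the overall $\tfrac1n$, while $(Q_{\bpsi})_{jj}$ is independent of $\bpsi$. Hence both $\nabla_j D_{\bpsi}(\balpha)=(Q_{\bpsi}\balpha)_j-\tfrac1n$ and the exact line-search step $\cO(\balpha;j)$ in (\ref{oracle:dual_rlm}) depend on $\bpsi$ only through $\psi_{\lceil j/2\rceil}$. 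I then take the $n$ instances $\bpsi_{j,1},\bpsi_{j,2}$, $j\in[n/2]$, of \lemref{lemma:n_iteration_rlm} (all entries $-\pi/2$, except that $\bpsi_{j,2}$ flips block $j$ to $+\pi/2$), together with the prior $\cD(\cH)$ that draws $j\sim\cU([n/2])$ and then picks $\bpsi_{j,1}$ or $\bpsi_{j,2}$ with probability $1/2$. For any deterministic algorithm: after $k<n/2$ oracle calls on the base instance $\bpsi_0:=\bpsi_{j,1}$ (the same vector for every $j$), at most $k<n/2$ distinct blocks have been queried, so some block $j^\star$ is untouched; a step-by-step induction then shows the run on $\bpsi_{j^\star,2}$ coincides with the run on $\bpsi_0$ (same sequence of queries, and the oracle answers agree because the two instances differ only inside the never-queried block $j^\star$), so $\balpha_1^{(k)}(\bpsi_0)=\balpha_1^{(k)}(\bpsi_{j^\star,2})$. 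By the triangle inequality this common iterate is at distance at least $\tfrac12\|\balpha^*(\bpsi_{j^\star,1})-\balpha^*(\bpsi_{j^\star,2})\|$ from one of the two minimizers, and \lemref{lemma:n_iteration_rlm} lower-bounds that gap by $\tfrac{2\sqrt2}{\lambda n+2}$; combining the $\bpsi_0$-term and the $\bpsi_{j^\star,2}$-term of $\bE_{\bpsi\sim\cD(\cH)}\|\balpha_1^{(k)}(\bpsi)-\balpha^*(\bpsi)\|$ gives a bound of order $\tfrac{1}{n(\lambda n+2)}$. Applying $1$-strong convexity of $D_{\bpsi}$, then Jensen's inequality, then Yao's principle yields $\max_{\bpsi\in\cH}\bE\!\left[D_{\bpsi}(\balpha_1^{(k)}(\bpsi))-D_{\bpsi}(\balpha^*(\bpsi))\right]\ge c(n,\lambda)$ for a fixed positive constant independent of $k$; hence for every $\epsilon<c(n,\lambda)$ at least $n/2$ iterations are required.

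The step I expect to require the most care is the indistinguishability argument --- making precise that ``block $j^\star$ untouched'' forces the two runs, hence the output laws, to coincide. For deterministic algorithms this is a clean induction on the step index (the first step at which the runs could diverge would itself have to be a query into block $j^\star$, contradicting that it is untouched on $\bpsi_0$), and the passage to randomized algorithms goes solely through Yao's principle, exactly as in \thmref{thm:rlm_lb} and \thmref{thm:non_trivial_accuracy_finite_sum}. A secondary point worth stating explicitly, just as in the finite-sum case, is that here an ``iteration'' is identified with a single call to an oracle of the form (\ref{oracle:dual_rlm}), each of which touches at most one block; the lower bound then reads off as $\ge n/2$ and in fact holds for \emph{any} algorithm --- not necessarily a CLI, not necessarily oblivious --- restricted to such block-incremental oracle access.
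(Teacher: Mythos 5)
Your proposal is correct and follows the same skeleton as the paper's proof: reduce to deterministic algorithms via Yao's principle against the prior that picks a block $j$ uniformly and then one of $\bpsi_{j,1},\bpsi_{j,2}$ with probability $1/2$, argue that for $k<n/2$ the $k$-th iterate is insensitive to some $\psi_{j^\star}$, invoke \lemref{lemma:n_iteration_rlm} for the $\Omega(1/(\lambda n))$ separation of the corresponding minimizers, and finish with $1$-strong convexity of $D_{\bpsi}$ and Jensen. The one step you handle differently is the key insensitivity claim. The paper derives it from \lemref{lem:pcli_coefficients_polynomials_rlm}: the coordinates of $\balpha_1^{(k)}$ are polynomial expressions in $\sin\psi_1,\dots,\sin\psi_{n/2}$ whose degrees sum to at most $k$, so when $k<n/2$ at least one variable $\sin\psi_{j^\star}$ cannot appear, and $\bt(\bpsi)$ is literally constant in $\psi_{j^\star}$. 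You instead run a direct indistinguishability induction: each oracle call of type (\ref{oracle:dual_rlm}) with coordinate index $j$ depends on $\bpsi$ only through $\psi_{\lceil j/2\rceil}$ (by the block structure of $Q_{\bpsi}$), so a run of fewer than $n/2$ calls on the all-$(-\pi/2)$ instance leaves some block untouched and coincides with the run on the flipped instance. Both routes are sound; yours is more elementary (it bypasses the polynomial lemma entirely) and strictly more general, since it applies to any algorithm with block-incremental oracle access rather than only to oblivious CLIs --- exactly paralleling how the paper states \thmref{thm:non_trivial_accuracy_finite_sum} for arbitrary algorithms with incremental oracles in the finite-sum case, whereas its RLM counterpart is stated only for CLIs. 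The paper's route is shorter given that the polynomial lemma is already needed for \thmref{thm:rlm_lb}. One caveat you inherit from the paper rather than introduce: the vectors $\bpsi_{j,1},\bpsi_{j,2}$ of \lemref{lemma:n_iteration_rlm} have all coordinates equal to $-\pi/2$ (except one), which does not literally lie in the set $\cH_{\text{RLM}}$ as defined (all entries $0$ except one); this is an inconsistency in the paper's construction, not a gap in your argument.
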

\begin{proof}
Let $\cA$ be a stochastic optimization algorithm. By \lemref{lem:pcli_coefficients_polynomials_rlm} the coordinates of $\balpha^{(k)}_1$ (the point returned by the algorithm at the $k$'th iteration) when applied on the class of problems (\ref{problem:rlm}) distributes according to some distribution $\cD$ over $(\cQ^{n/2}_k)^n$. By Yao principle, since each polynomial in $\cQ^{n/2}_{k,n}$ represents a single deterministic algorithm, we have 
\begin{align} \label{ineq:proof:thm:rlm_lb_n_iterations}
	\max_{\bpsi\in\cH}\bE_{\balpha_1^{(k)}(\bpsi)\sim\cD}\|\balpha_1^{(k)}(\bpsi)-\balpha^*(\bpsi)\|\ge \min_{\bt(\bpsi)\in\cQ^{n/2}_{k,n}} \bE_{\bpsi\sim\cD(\cH)} \|\bt(	\bpsi)-\balpha^*(\bpsi)\|
\end{align}
where $\cD(\cH)$ denotes a distribution over $\cH$ which corresponds to the process of first drawing $j\sim\cU([n/2])$ at random, and then set $\bpsi$ to be $\bpsi_{j,1}$ or $\bpsi_{j,2}$ as defined in \lemref{lemma:n_iteration_rlm} with equal probability. Now, for $k<n/2$, there exists some $j\in[n/2]$ such that $\bt(\bpsi)$ does not depend on $\psi_j$.
This yields,  
\begin{align*}
	\max_{\{\bpsi_{j,i}|j\in[n/2],i\in[2]\}} &\bE_{\cA}[ D_{\bpsi_{j,i}}(\balpha^{(k)}\circpar{\bpsi_{j,i}}) - D_{\bpsi_{j,i}}(\balpha^{*}\circpar{\bpsi_{j,i}}) ] \\&\ge 
	\min_{\text{deterministic algorithms}}\bE_{_{\bpsi}\sim\cD(\cH)}[ D_{\bpsi_{j,i}}(\balpha^{(k)}\circpar{\bpsi_{j,i}}) - D_{\bpsi_{j,i}}(\balpha^{*}\circpar{\bpsi_{j,i}}) ]\\
	&\ge 
	\min_{\text{deterministic algorithms}}\bE_{_{\bpsi}\sim\cD(\cH)} 
	\frac{1}{2}\| \balpha^{(k)}\circpar{\bpsi_{j,i}}) - \balpha^{*}\circpar{\bpsi_{j,i}} \|^2\\
	&\ge 
	\frac12\min_{\text{deterministic algorithms}}\circpar{\bE_{_{\bpsi}\sim\cD(\cH)} 
	\| \balpha^{(k)}\circpar{\bpsi_{j,i}}) - \balpha^{*}\circpar{\bpsi_{j,i}} \|}^2\\
&\ge \frac12\circpar{\frac{2\sqrt2 }{ n(\lambda n+2)}}^2,
\end{align*}
where the last inequality follows from \lemref{lemma:n_iteration_rlm}. Thus, for sufficiently small $\epsilon$, one must perform at least $n/2$ iterations in order to obtain an $\epsilon$-optimal solution.
\end{proof}

						\subsection{Best polynomial approximation over closed intervals in \texorpdfstring{$\reals$}{reals} } 
In the following section we analyze the best polynomial approximation of some functions w.r.t. $L_\infty$, $L_1$ and $L_2$ norm, based on standard results from the approximation theory (see generally,
Allan Pinkus. On L1-approximation, 1989; Theodore J Rivlin. An introduction to the approximation of functions, 2003; Ronald A DeVore and George G Lorentz. Constructive approximation, 1993; Naum Il’ich Akhiezer and Charles J Hyman. Theory of approximation. Translated by Charles J. Hyman. New York, 1956; Isidor Pavlovich Natanson. Constructive function theory, 1964).

										\subsubsection{Approximation w.r.t. \texorpdfstring{$L_\infty$}{maximum norm} } \label{subsubsection:max_norms}
										
\begin{lemma} \label{lem:max_norm_general}
Let $b>a>0$ and $c>-a$, then 
\begin{align*}
	\min_{s(\eta)\in\cP_k}\left\|s(\eta)-\frac{1}{\eta+c} \right\|_{L_{\infty}([a,b])} \ge\frac{2(b-a)}{(b+a+2c)^2-(b-a)^2}\circpar{\frac{\sqrt\frac{b+c}{a+c}-1}{\sqrt\frac{b+c}{a+c}+1}}^k.
\end{align*}
\end{lemma}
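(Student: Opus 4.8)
The goal is to lower-bound the best $L_\infty$ approximation of $1/(\eta+c)$ on $[a,b]$ by polynomials of degree $\le k$. The plan is to reduce this to the Chebyshev estimate (\ref{ineq:basic_chebyshev}) by an affine change of variables mapping $[a,b]$ onto $[-1,1]$. Concretely, I would substitute $\eta = \frac{b-a}{2}\,x + \frac{a+b}{2}$ so that as $x$ ranges over $[-1,1]$, $\eta$ ranges over $[a,b]$. Under this substitution $\eta + c = \frac{b-a}{2}\bigl(x + \tfrac{a+b+2c}{b-a}\bigr)$, so writing $\tilde c \coloneqq \frac{a+b+2c}{b-a}$ we get $\frac{1}{\eta+c} = \frac{2}{b-a}\cdot\frac{1}{x+\tilde c}$. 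Since the affine substitution is a bijection between $\cP_k$ (in $\eta$) and $\cP_k$ (in $x$), we obtain
\[
\min_{s\in\cP_k}\Bigl\|s(\eta)-\tfrac{1}{\eta+c}\Bigr\|_{L_\infty([a,b])}
= \frac{2}{b-a}\min_{p\in\cP_k}\Bigl\|p(x)-\tfrac{1}{x+\tilde c}\Bigr\|_{L_\infty([-1,1])}.
\]

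**Applying Chebyshev's bound.**

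Next I would massage (\ref{ineq:basic_chebyshev}), which is stated for $1/(\eta-c)$ with $c>1$, into the form $1/(x+\tilde c)$. Note $\frac{1}{x+\tilde c} = \frac{1}{x-(-\tilde c)}$, so the relevant ``$c$'' parameter in (\ref{ineq:basic_chebyshev}) is $c_0 \coloneqq -\tilde c$ if $\tilde c<-1$, or equivalently, since $1/(x+\tilde c)$ evaluated on $[-1,1]$ is a reflection $x\mapsto -x$ of $1/(x-\tilde c)$ and $\cP_k$ is invariant under $x\mapsto -x$, we may just apply (\ref{ineq:basic_chebyshev}) with $c = \tilde c$ provided $\tilde c > 1$. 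The hypotheses $b>a>0$ and $c>-a$ are exactly what guarantee $\tilde c > 1$: indeed $\tilde c - 1 = \frac{a+b+2c - (b-a)}{b-a} = \frac{2(a+c)}{b-a} > 0$. So (\ref{ineq:basic_chebyshev}) gives
\[
\min_{p\in\cP_k}\Bigl\|p(x)-\tfrac{1}{x+\tilde c}\Bigr\|_{L_\infty([-1,1])} \ge \frac{(\tilde c - \sqrt{\tilde c^2-1})^k}{\tilde c^2 - 1}.
\]
Then I would substitute back: $\tilde c^2 - 1 = \frac{(a+b+2c)^2 - (b-a)^2}{(b-a)^2}$, so $\frac{2}{b-a}\cdot\frac{1}{\tilde c^2-1} = \frac{2(b-a)}{(a+b+2c)^2-(b-a)^2}$, which matches the claimed prefactor.

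**The geometric ratio.**

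The remaining task is to identify $\tilde c - \sqrt{\tilde c^2 - 1}$ with $\frac{\sqrt{(b+c)/(a+c)}-1}{\sqrt{(b+c)/(a+c)}+1}$. This is a routine algebraic identity: for $\tilde c > 1$ one has the standard identity $\tilde c - \sqrt{\tilde c^2-1} = \frac{\sqrt{\tilde c+1}-\sqrt{\tilde c-1}}{\sqrt{\tilde c+1}+\sqrt{\tilde c-1}}$ (multiply numerator and denominator of the right side by $\sqrt{\tilde c+1}+\sqrt{\tilde c-1}$ and simplify). Now $\tilde c + 1 = \frac{2(b+c)}{b-a}$ and $\tilde c - 1 = \frac{2(a+c)}{b-a}$, so $\sqrt{(\tilde c+1)/(\tilde c-1)} = \sqrt{(b+c)/(a+c)}$, and dividing numerator and denominator by $\sqrt{\tilde c - 1}$ yields exactly the claimed ratio. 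Raising to the $k$-th power completes the proof.

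**Main obstacle.** There is no deep obstacle here; the only thing requiring care is bookkeeping the affine change of variables and checking the sign/reflection conventions so that (\ref{ineq:basic_chebyshev}) applies with the correct parameter $\tilde c > 1$ — in particular verifying that the hypotheses $b > a > 0$ and $c > -a$ (rather than the weaker $c>-b$) are what is actually needed, which comes down to the computation $\tilde c - 1 = 2(a+c)/(b-a)$ above. The algebraic simplification of the geometric ratio is the kind of identity one should state cleanly rather than belabor.
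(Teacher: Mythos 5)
Your proposal is correct and follows essentially the same route as the paper's proof: an affine change of variables mapping $[a,b]$ to $[-1,1]$, an application of Chebyshev's bound (\ref{ineq:basic_chebyshev}) with parameter $\tilde c = \frac{a+b+2c}{b-a}>1$, and the algebraic identity $\tilde c-\sqrt{\tilde c^2-1}=\frac{\sqrt{\tilde c+1}-\sqrt{\tilde c-1}}{\sqrt{\tilde c+1}+\sqrt{\tilde c-1}}$ (the paper's \lemref{lem:technical1}) to put the ratio in the stated form. The only cosmetic difference is that you use the orientation-preserving affine map plus the reflection invariance of $\cP_k$, whereas the paper absorbs the sign into the substitution directly.
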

\begin{proof}
We have,
\begin{align*}
	\min_{s(\eta)\in\cP_k}\left\|s(\eta)-\frac{1}{\eta+c} \right\|_{L_{\infty}([a,b])} 
	&=\min_{s(\eta)\in\cP_k}\left\|s\circpar{\frac{a-b}{2}\eta+\frac{a+b}{2}}-\frac{1}{\frac{a-b}{2}\eta+\frac{b+a}{2}+c} \right\|_{L_{\infty}([-1,1])} \\
	&=\min_{s(\eta)\in\cP_k}\left\|s\circpar{\eta}-\frac{1}{\frac{a-b}{2}\eta+\frac{b+a+2c}{2}} \right\|_{L_{\infty}([-1,1])} \\
	&=\frac{2}{b-a}\min_{s(\eta)\in\cP_k}\left\|\frac{b-a}{2} s\circpar{\eta}-\frac{\frac{b-a}{2}}{\frac{a-b}{2}\eta+\frac{b+a+2c}{2}} \right\|_{L_{\infty}([-1,1])} \\
	&=\frac{2}{b-a}\min_{s(\eta)\in\cP_k}\left\|s\circpar{\eta}+\frac{1}{\eta-\frac{b+a+2c}{b-a}} \right\|_{L_{\infty}([-1,1])} \\
	&=\frac{2}{b-a}\min_{s(\eta)\in\cP_k}\left\|-s\circpar{\eta}+\frac{1}{\eta-\frac{b+a+2c}{b-a}} \right\|_{L_{\infty}([-1,1])} \\
	&=\frac{2}{b-a}\min_{s(\eta)\in\cP_k}\left\|s\circpar{\eta}-\frac{1}{\eta-\frac{b+a+2c}{b-a}} \right\|_{L_{\infty}([-1,1])} 
\end{align*}
where  we used the fact that $\cP_k$ is invariant under pre-composition and post-composition with linear function in the second, fourth and fifth equalities. Now, since 
\begin{align*}
	c>-a\implies \frac{b+a+2c}{b-a} >1,
\end{align*}
combining Inequality \ref{ineq:basic_chebyshev} with \lemref{lem:technical1}, yields
\begin{align} 
	 \min_{s(\eta)\in\cP_k}  \left\| s(\eta)- \frac{1}{\eta-\frac{b+a+2c}{b-a} }\right\|_{L_{\infty}([-1,1])}	&\ge
	\frac{\circpar{\circpar{\frac{b+a+2c}{b-a}}-\sqrt{\circpar{\frac{b+a+2c}{b-a}}^2-1}}^k}{\circpar{\frac{b+a+2c}{b-a}}^2-1}\nonumber\\&=
	\frac{1}{\circpar{\frac{b+a+2c}{b-a}}^2-1}\circpar{\frac{1-\sqrt\frac{\frac{b+a+2c}{b-a}-1}{\frac{b+a+2c}{b-a}+1}}{1+\sqrt\frac{\frac{b+a+2c}{b-a}-1}{\frac{b+a+2c}{b-a}+1}}}^k
\end{align}
Noting that 
\begin{align*}
\frac{\frac{b+a+2c}{b-a}  -1}{\frac{b+a+2c}{b-a} +1}=\frac{b+a+2c - (b-a)}{b+a+2c+b-a}=\frac{a+c}{b+c},
\end{align*}
we get 
\begin{align*}
	\min_{s(\eta)\in\cP_k}\left\|s(\eta)-\frac{1}{\eta+c} \right\|_{L_{\infty}([a,b])} &\ge \frac{2}{b-a}\frac{1}{\circpar{\frac{b+a+2c}{b-a}}^2-1}\circpar{\frac{\sqrt\frac{b+c}{a+c}-1}{\sqrt\frac{b+c}{a+c}+1}}^k.
\end{align*}
As,
\begin{align*}
\frac{2}{b-a}\frac{1}{\circpar{\frac{b+a+2c}{b-a}}^2-1}=\frac{2}{b-a}\frac{(b-a)^2}{(b+a+2c)^2-(b-a)^2}=
\frac{2(b-a)}{(b+a+2c)^2-(b-a)^2}
\end{align*}
we get 
\begin{align*}
	\min_{s(\eta)\in\cP_k}\left\|s(\eta)-\frac{1}{\eta+c} \right\|_{L_{\infty}([a,b])}&\ge
	\frac{2(b-a)}{(b+a+2c)^2-(b-a)^2}\circpar{\frac{\sqrt\frac{b+c}{a+c}-1}{\sqrt\frac{b+c}{a+c}+1}}^k.
\end{align*}

\end{proof}

											\subsubsection{Approximation w.r.t. \texorpdfstring{$L_1$}{L one norm} } \label{subsubsection:L1_norms}
Let $U_k(\eta)$ denote the $k$'th second order Chebyshev polynomial, i.e.,
\begin{align}\label{def:chebyshev_second}
	U_k(\eta)&\coloneqq \frac{\sin((k+1)\arccos{\eta})}{\sqrt{1-\eta^2}}
\end{align}
(To see why these are indeed polynomials, observe that $U_k(\eta)$ are the derivative of Chebyshev polynomials of first order scaled by a factor of $1/k$). The zeros of $U_k(\eta)$ are $\eta_j=\cos(\frac{j\pi}{k+1}),~j=1,\dots,k$. First, let us establish the orthogonality of $\sgn(U_k(\eta))$ with respect to $\cP_{k-1}$ over $[-1,1]$.

\begin{lemma}
Let $p(\eta)\in\cP_{k-1}$, then 
\begin{align*}
	\int_{-1}^1 p(\eta)\sgn(U_k(\eta))~d\eta=0
\end{align*}
\end{lemma}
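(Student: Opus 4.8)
The plan is to reduce the claim to an elementary fact about trigonometric polynomials through the substitution $\eta=\cos\theta$, $\theta\in[0,\pi]$. Since $U_k(\cos\theta)=\sin((k+1)\theta)/\sin\theta$ by the defining formula \paref{def:chebyshev_second}, and $\sin\theta>0$ on $(0,\pi)$, we have $\sgn(U_k(\cos\theta))=\sgn(\sin((k+1)\theta))$ for almost every $\theta$. Writing $d\eta=-\sin\theta\,d\theta$, the integral becomes
$$\int_{-1}^1 p(\eta)\,\sgn(U_k(\eta))\,d\eta=\int_0^\pi p(\cos\theta)\,\sgn\!\big(\sin((k+1)\theta)\big)\,\sin\theta\,d\theta .$$

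Next I would expand $p(\cos\theta)$ as a cosine polynomial: since $T_j(\cos\theta)=\cos(j\theta)$ and $\{T_0,\dots,T_{k-1}\}$ spans $\cP_{k-1}$, one can write $p(\cos\theta)=\sum_{j=0}^{k-1}a_j\cos(j\theta)$. Using $\cos(j\theta)\sin\theta=\tfrac12\big(\sin((j+1)\theta)-\sin((j-1)\theta)\big)$, it follows that $p(\cos\theta)\sin\theta=\sum_{m=1}^{k}b_m\sin(m\theta)$ for suitable real $b_m$; in particular every frequency appearing is at most $k<k+1$. Hence it suffices to show that
$$\int_0^\pi \sin(m\theta)\,\sgn\!\big(\sin((k+1)\theta)\big)\,d\theta=0,\qquad 1\le m\le k .$$

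For this last display I would argue directly: on the interval $\big[\tfrac{l\pi}{k+1},\tfrac{(l+1)\pi}{k+1}\big]$ (for $l=0,\dots,k$) the quantity $(k+1)\theta$ ranges over $[l\pi,(l+1)\pi]$, so $\sgn(\sin((k+1)\theta))=(-1)^l$ there, and the integral equals $-\tfrac1m\sum_{l=0}^{k}(-1)^l\big(\cos\tfrac{m(l+1)\pi}{k+1}-\cos\tfrac{ml\pi}{k+1}\big)$. Summing the geometric series $\sum_{l=0}^{k}(-1)^l e^{iml\pi/(k+1)}$ in closed form and taking real parts, the prefactor $1-(-1)^{(k+1)+m}$ annihilates the sum when $m\not\equiv k+1\pmod 2$, while in the complementary case the surviving quotient $\frac{e^{i\phi}-1}{e^{i\phi}+1}=i\tan(\phi/2)$ (with $\phi=m\pi/(k+1)$) is purely imaginary, so the real part is again $0$; either way the integral vanishes. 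Equivalently, one may simply invoke the Fourier series of the square wave, $\sgn(\sin((k+1)\theta))=\tfrac4\pi\sum_{l\ge0}\tfrac{1}{2l+1}\sin\big((2l+1)(k+1)\theta\big)$, all of whose frequencies are multiples of $k+1$ and hence $L^2(0,\pi)$-orthogonal to every $\sin(m\theta)$ with $1\le m\le k$. There is no real obstacle here: the one idea is the change of variables, which turns $\sgn(U_k)$ into a square wave of fundamental frequency $k+1$ and makes the orthogonality transparent; the rest is bookkeeping with elementary integrals.
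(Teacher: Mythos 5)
Your proof is correct, and it follows the same overall strategy as the paper's: substitute $\eta=\cos\theta$, observe that $\sgn(U_k(\cos\theta))=\sgn(\sin((k+1)\theta))$, and reduce the claim to the orthogonality of this square wave against trigonometric polynomials of degree at most $k$. Where you diverge is in how that orthogonality is verified. The paper extends the integral to $[-\pi,\pi]$ by evenness and uses invariance under the shift $\theta\mapsto\theta+2\pi/(k+1)$ together with $e^{-2\pi i j/(k+1)}\neq1$ to kill each Fourier mode $j=1,\dots,k$ (as written it actually checks the integral against $\sin((k+1)\theta)$ rather than its sign, but the same periodicity argument applies to the sign, so this is a cosmetic slip on the paper's side). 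You instead stay on $[0,\pi]$, convert $p(\cos\theta)\sin\theta$ into a sine polynomial of degree at most $k$, and either integrate explicitly over the intervals where the sign is constant or invoke the square-wave Fourier expansion $\sgn(\sin((k+1)\theta))=\tfrac4\pi\sum_{l\ge0}\tfrac{1}{2l+1}\sin((2l+1)(k+1)\theta)$, whose frequencies all exceed $k$. Your route is more computational but equally rigorous, and it handles the sign function directly. One tiny slip: the prefactor $1-(-1)^{(k+1)+m}$ vanishes when $m\equiv k+1\pmod 2$, not in the complementary case as you wrote; this is harmless since, as you note, the other case is killed by the purely imaginary quotient $i\tan(\phi/2)$, so the integral vanishes either way.
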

\begin{proof}
We integrate by substituting $\eta=\frac{e^{i\theta}+e^{-i\theta}}{2}\circpar{=\cos(\theta)}$,  
\begin{align} \label{eq:lem4_ortho}
	\int_{-1}^1 p(\eta)\sgn(U_k(\eta))~d\eta~&\stackrel{\eta=\frac{e^{i\theta}+e^{-i\theta}}{2}}{=} \int_{\pi}^0 p\circpar{\frac{e^{i\theta}+e^{-i\theta}}{2}}\sgn\circpar{U_k\circpar{\frac{e^{i\theta}+e^{-i\theta}}{2}}}\circpar{\frac{ie^{i\theta}-ie^{-i\theta}}{2}} ~d\theta\nonumber\\
	&=\int_0^{\pi} p\circpar{\frac{e^{i\theta}+e^{-i\theta}}{2}}\sgn\circpar{ \frac{\sin((k+1)\theta)}{\sin(\theta)} }\circpar{\frac{ie^{-i\theta}-ie^{i\theta}}{2}} ~d\theta\nonumber\\
	&=\int_0^{\pi} p\circpar{\frac{e^{i\theta}+e^{-i\theta}}{2}}\sgn\circpar{ \sin((k+1)\theta)}\circpar{\frac{ie^{-i\theta}-ie^{i\theta}}{2}} ~d\theta\nonumber\\
	&=\frac{1}{2}\int_{-\pi}^\pi p\circpar{\frac{e^{i\theta}+e^{-i\theta}}{2}}\sgn\circpar{ \sin((k+1)\theta)}\circpar{\frac{ie^{-i\theta}-ie^{i\theta}}{2}} ~d\theta
\end{align}
where the last equality is due to the fact that the integrand is an even function in $\theta$. Lastly, since for any $j=1,\dots,k$ we have
\begin{align*}
	\int_{-\pi}^\pi e^{-ij\theta} \sin((k+1)\theta)  d\theta&=\int_{-\pi+2\pi/(k+1)}^{\pi+2\pi/(k+1)} e^{-ij\theta} \sin((k+1)\theta)  d\theta\\
	&=\int_{-\pi}^{\pi} e^{-ij(\theta+2\pi/(k+1))} \sin((k+1)(\theta+2\pi/(k+1)))  d\theta\\
	&=e^{-2\pi i j/(k+1)} \int_{-\pi}^{\pi} e^{-ij\theta} \sin((k+1)\theta)  d\theta,
\end{align*}
and since $e^{-2\pi i j/(k+1)}\neq1$ for $j=1,\dots,k$, it follows that 
\begin{align*}
\int_{-\pi}^\pi e^{-ij\theta} \sin((k+1)\theta)  d\theta=0, \quad j=1,\dots,k.
\end{align*}
This, together with the case where $j=0$,
\begin{align*}
	\int_{-\pi}^\pi \sin((k+1)\theta)  d\theta = \circpar{-\cos((k+1)\theta)/(k+1)}\Big|_{-\pi}^{\pi}=0,
\end{align*}
implies that all the terms in (\ref{eq:lem4_ortho}) vanish, thus concluding the proof.
\end{proof}

Given $\mu<L$ (note that, here $\mu$ and $L$ are allowed to take negative values), we define 
\begin{align*}
	\tilde{U}_k(\eta)\coloneqq U_k\circpar{ \frac{2\eta}{\mu-L}}.
\end{align*}
By substituting $\eta$ for $\frac{(\mu-L)\eta}{2}$, we get the following corollary.
\begin{corollary} \label{appendix:cor_ortho}
Let $p(\eta)\in\cP_{k-1}$, then
\begin{align} \label{eq:L_1_ortho_mu_L}
	\int_{-\frac{L-\mu}{2}}^\frac{L-\mu}{2} p(\eta)~\sgn(\tilde{U}_k(\eta))~d\eta =0
\end{align}
\end{corollary}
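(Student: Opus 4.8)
The plan is to obtain this corollary as an immediate consequence of the preceding lemma (orthogonality of $\sgn(U_k)$ to $\cP_{k-1}$ over $[-1,1]$) via the linear change of variables $\eta = \tfrac{\mu-L}{2}\,u$. This affine map sends $[-1,1]$ onto $\left[-\tfrac{L-\mu}{2},\tfrac{L-\mu}{2}\right]$, reversing orientation since $\mu-L<0$, and by the very definition $\tilde U_k(\eta) = U_k\circpar{\tfrac{2\eta}{\mu-L}}$ it pulls $\tilde U_k$ back to $U_k$, so that $\sgn(\tilde U_k(\eta))$ becomes $\sgn(U_k(u))$ under the substitution.

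Concretely, I would fix $p\in\cP_{k-1}$, set $q(u)\coloneqq p\circpar{\tfrac{\mu-L}{2}u}$ and note that $q\in\cP_{k-1}$ as well, since pre-composing a polynomial with an invertible affine map does not raise its degree. Substituting $\eta = \tfrac{\mu-L}{2}u$, $d\eta = \tfrac{\mu-L}{2}\,du$ in $\int_{-\frac{L-\mu}{2}}^{\frac{L-\mu}{2}} p(\eta)\,\sgn(\tilde U_k(\eta))\,d\eta$, and tracking that the endpoint $\eta=-\tfrac{L-\mu}{2}$ corresponds to $u=1$ while $\eta=\tfrac{L-\mu}{2}$ corresponds to $u=-1$, one flips the limits of integration; the resulting sign exactly cancels the negative Jacobian factor $\tfrac{\mu-L}{2}$, leaving $\tfrac{L-\mu}{2}\int_{-1}^{1} q(u)\,\sgn(U_k(u))\,du$. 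By the previous lemma applied to $q\in\cP_{k-1}$, this integral is zero, hence so is the original one.

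The only point requiring care is the orientation and sign bookkeeping in the change of variables (because $\mu-L<0$), but this is precisely what reconciles the swapped limits with the negative Jacobian, and no genuine difficulty arises: the whole statement reduces to the orthogonality already established over the standard interval $[-1,1]$.
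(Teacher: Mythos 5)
Your proposal is correct and is exactly the paper's argument: the paper obtains the corollary "by substituting $\eta$ for $\frac{(\mu-L)\eta}{2}$" in the preceding orthogonality lemma, which is precisely your affine change of variables. Your additional bookkeeping (degree preservation of $q$ under the affine pre-composition, and the cancellation between the flipped limits and the negative Jacobian) just spells out the details the paper leaves implicit.
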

We now use \corref{appendix:cor_ortho} to bound from below the best polynomial $L_1$-approximation error w.r.t. $1/(\eta+\alpha)$ over the interval $[\mu,L]$.

\begin{lemma} \label{appendix:lem:l1_norm_lb}
Let $p(\eta)\in\cP_{k-1}$. Then, for any $(L-\mu)/2<\alpha$ we have
\begin{align*}
	\int_{-\frac{L-\mu}{2}}^{\frac{L-\mu}{2}} |p(\eta)-1/(\eta+\alpha)|d\eta &\ge\circpar{\frac{\sqrt\frac{2\alpha+L-\mu}{2\alpha+\mu-L}-1}{\sqrt\frac{2\alpha+L-\mu}{2\alpha+\mu-L}+1}}^k.
\end{align*}
\end{lemma}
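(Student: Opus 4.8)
The plan is to prove \lemref{appendix:lem:l1_norm_lb} by the classical duality argument for best $L_1$ approximation, feeding it the orthogonality already established in \corref{appendix:cor_ortho}. Since $|\sgn(\tilde{U}_k(\eta))|\le 1$ pointwise on the interval, for every $p\in\cP_{k-1}$ one has $\int_{-(L-\mu)/2}^{(L-\mu)/2}|p(\eta)-1/(\eta+\alpha)|\,d\eta \ge \left|\int_{-(L-\mu)/2}^{(L-\mu)/2}(p(\eta)-1/(\eta+\alpha))\,\sgn(\tilde{U}_k(\eta))\,d\eta\right|$. By \corref{appendix:cor_ortho} the polynomial term drops out, so the right-hand side equals $\left|\int_{-(L-\mu)/2}^{(L-\mu)/2}\sgn(\tilde{U}_k(\eta))/(\eta+\alpha)\,d\eta\right|$, and the whole lemma reduces to bounding this one explicit integral from below.

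The next step is to evaluate that integral. I would rescale $\eta=-\tfrac{L-\mu}{2}t$, which turns $\tilde{U}_k(\eta)$ into $U_k(t)$ and the integral into $\int_{-1}^{1}\sgn(U_k(t))/(c-t)\,dt$ with $c=2\alpha/(L-\mu)$; the hypothesis $\alpha>(L-\mu)/2$ is exactly $c>1$, so $1/(c-t)$ is smooth on $[-1,1]$. Using the parity $U_k(-t)=(-1)^kU_k(t)$ and then $t=\cos\theta$ together with $\sgn(U_k(\cos\theta))=\sgn(\sin((k+1)\theta))$ on $(0,\pi)$, this becomes $\pm\int_0^\pi\sgn(\sin((k+1)\theta))\,\tfrac{\sin\theta}{c+\cos\theta}\,d\theta$. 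I would then expand the square wave into its Fourier series $\sgn(\sin((k+1)\theta))=\tfrac{4}{\pi}\sum_{j\ge0}\tfrac{\sin((2j+1)(k+1)\theta)}{2j+1}$ and evaluate each term using $\int_0^\pi\tfrac{\sin(m\theta)\sin\theta}{c+\cos\theta}\,d\theta=\pi(-1)^{m+1}r^m$, which follows by integration by parts from the generating-function identity $\ln(c+\cos\theta)=-\ln(2r)+2\sum_{n\ge1}\tfrac{(-1)^{n+1}}{n}r^n\cos(n\theta)$ with $r:=c-\sqrt{c^2-1}\in(0,1)$. Summing the resulting series (only the geometric tail in powers of $r^{k+1}$ survives) gives the closed form $2\ln\tfrac{1+r^{k+1}}{1-r^{k+1}}$.

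It then remains to recognize the answer. A one-line algebraic simplification shows $r=c-\sqrt{c^2-1}=\tfrac{\sqrt{(c+1)/(c-1)}-1}{\sqrt{(c+1)/(c-1)}+1}$, and substituting $c=2\alpha/(L-\mu)$ rewrites $(c+1)/(c-1)$ as $(2\alpha+L-\mu)/(2\alpha+\mu-L)$, so $r$ is precisely the base appearing in the statement; the elementary bound $\ln\tfrac{1+x}{1-x}\ge 2x$ for $x\in[0,1)$ then converts the closed form into the claimed power of $r$.

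The main obstacle is the exact evaluation in the second step: one has to be careful with the sign changes produced by the rescaling and by the $t=\cos\theta$ substitution, and to assemble the Fourier-series/generating-function computation so that all spurious frequencies cancel and only the $r^{k+1}$ geometric series remains. The duality inequality, the summation of the series, and the final algebraic identifications are routine.
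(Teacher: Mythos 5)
Your proposal is correct and follows essentially the same route as the paper: the same duality/orthogonality reduction via \corref{appendix:cor_ortho}, the same rescaling to $\int_{-1}^1 \sgn(U_k(t))/(c-t)\,dt$ with $c=2\alpha/(L-\mu)>1$, the same closed form $2\ln\frac{1+r^{k+1}}{1-r^{k+1}}$, and the same elementary logarithm bound --- the only difference being that you re-derive the integral identity from the square-wave Fourier series and the $\ln(c+\cos\theta)$ generating function, whereas the paper imports it from Akhiezer via \lemref{lem:technical3}. One detail to note: since $\sgn(U_k)=\sgn(\sin((k+1)\arccos(\cdot)))$, the evaluation naturally produces the exponent $k+1$ (so a bound of order $r^{k+1}$ rather than $r^k$); the paper's own application of \lemref{lem:technical3} carries the identical off-by-one, so your argument proves exactly what the paper's does, and the discrepancy only affects constants downstream.
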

\begin{proof}
First, note that the following two inequalities 
\begin{align*}
	\int_{-\frac{L-\mu}{2}}^{\frac{L-\mu}{2}} |p(\eta)-1/(\eta+\alpha)|d\eta 
		&\ge\int_{-\frac{L-\mu}{2}}^{\frac{L-\mu}{2}} (p(\eta)-1/(\eta+\alpha))\sgn(\tilde{U}_k(\eta)) d\eta=-\int_{-\frac{L-\mu}{2}}^{\frac{L-\mu}{2}} 1/(\eta+\alpha)\sgn(\tilde{U}_k(\eta)) d\eta\\
		\int_{-\frac{L-\mu}{2}}^{\frac{L-\mu}{2}} |p(\eta)-1/(\eta+\alpha)|d\eta 
		&\ge\int_{-\frac{L-\mu}{2}}^{\frac{L-\mu}{2}} (p(\eta)-1/(\eta+\alpha))\sgn(-\tilde{U}_k(\eta)) d\eta =\int_{-\frac{L-\mu}{2}}^{\frac{L-\mu}{2}} 1/(\eta+\alpha)\sgn(\tilde{U}_k(\eta)) d\eta\\
\end{align*}
hold due to orthogonality condition (\ref{eq:L_1_ortho_mu_L}) and the fact that $\sgn(\cdot)$ is odd. Therefore, 
\begin{align*}
	\int_{-\frac{L-\mu}{2}}^{\frac{L-\mu}{2}} |p(\eta)-1/(\eta+\alpha)|d\eta 
		&\ge\left|\int_{-\frac{L-\mu}{2}}^{\frac{L-\mu}{2}} 1/(\eta+\alpha)~\sgn(\tilde{U}_k(\eta)) d\eta\right|.
\end{align*}
Substituting $\frac{\mu-L}{2}\eta$ for $\eta$, yields
\begin{align}\label{eq:L1_approx_aux1323}
	\left|\int_{-\frac{L-\mu}{2}}^{\frac{L-\mu}{2}} 1/(\eta+\alpha)~\sgn(\tilde{U}_k(\eta)) d\eta\right|&=\left|\int_{1}^{-1} \frac{1}{\frac{\mu-L}{2}\eta + \alpha}~\sgn(U_k(\eta)) \frac{\mu-L}{2} d\eta\right|\nonumber\\
	&=\left|\int_{-1}^1 \frac{1}{\frac{\mu-L}{2}\eta+ \alpha}~\sgn(U_k(\eta)) \frac{L-\mu}{2} d\eta\right|\nonumber\\
	&=\left|\int_{-1}^1 \frac{1}{-\eta+\frac{2\alpha}{L-\mu} }~\sgn(U_k(\eta)) d\eta\right|
\end{align}

Now, plugging in the definition of $U_k(\eta)$ (see  (\ref{def:chebyshev_second})) and applying  \lemref{lem:technical3}, we get
\begin{align*}
	\int_{-1}^1\frac{\sgn{(\sin(k\arccos(\eta)))}}{u-\eta}d\eta&\ge \circpar{\frac{1-\sqrt\frac{u-1}{u+1}}{1+\sqrt\frac{u-1}{u+1}}}^k
\end{align*}
for any $u>1$. Using this inequality with (\ref{eq:L1_approx_aux1323}) where $u= \frac{2\alpha}{L-\mu}$, yields
\begin{align*}
	\int_{-\frac{L-\mu}{2}}^\frac{L-\mu}2 |p(\eta)-1/(\eta+\alpha)|d\eta&\ge
	\circpar{\frac{\sqrt\frac{2\alpha+L-\mu}{2\alpha+\mu-L}-1}{\sqrt\frac{2\alpha+L-\mu}{2\alpha+\mu-L}+1}}^k.
\end{align*}
\end{proof}

								\subsubsection{Approximation w.r.t. \texorpdfstring{$L_2$}{L two norm}} \label{subsubsection:L2_norms}

\begin{lemma}\label{appendix:lem:l2_norm_lb}
For any $\alpha\in(-1,0)$,
	\begin{align*}
	\min_{s(\eta)\in\cP_{k-1}} \int_{0}^1 \eta(s(\eta)\eta-1)^2 \eta^\alpha d\eta \ge 
	\frac{1}{e^{2} (k+2)^{2(\alpha+1)+2}}
\end{align*}
\end{lemma}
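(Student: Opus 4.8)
The plan is to reduce the statement to an explicitly solvable weighted least-squares problem and then estimate the closed form it produces. First, note that $\eta(s(\eta)\eta-1)^2\eta^\alpha = (s(\eta)\eta-1)^2\eta^{1+\alpha}$, and as $s$ ranges over $\cP_{k-1}$ the polynomial $q(\eta)\coloneqq 1-s(\eta)\eta$ ranges over exactly those $q\in\cP_k$ with $q(0)=1$ (and conversely). Hence the quantity to bound equals
\begin{align*}
m \;\coloneqq\; \min_{q\in\cP_k,\; q(0)=1}\;\int_0^1 q(\eta)^2\,\eta^{1+\alpha}\,d\eta .
\end{align*}
Writing $q(\eta)=\sum_{i=0}^k c_i\eta^i$ with $c_0=1$, this is the minimum of the quadratic form $c^\top M c$ over the affine slice $\{c_0=1\}$, where $M=(M_{ij})_{0\le i,j\le k}$ is the positive-definite moment matrix with $M_{ij}=\int_0^1\eta^{i+j+1+\alpha}\,d\eta = \tfrac{1}{i+j+2+\alpha}$. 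By the standard formula for the constrained minimum of a positive-definite quadratic form, $m = 1/(M^{-1})_{00}=\det M/\det M'$, where $M'$ is the principal submatrix of $M$ obtained by deleting the first row and column.

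Next, both $M$ and $M'$ are Cauchy matrices: $M_{ij}=1/(x_i+y_j)$ with $x_i=i$ and $y_j=j+2+\alpha$. Applying the Cauchy determinant formula to each, the double products $\prod_{i<i'}(i'-i)$ reduce to ratios of the ``superfactorials'' $\prod_{m\le k}m!$ (yielding a factor $(k!)^2$), and the denominators combine via $\prod_{\ell=2}^{k+2}(\ell+\alpha)=\Gamma(k+3+\alpha)/\Gamma(2+\alpha)$ and $\prod_{\ell=3}^{k+2}(\ell+\alpha)=\Gamma(k+3+\alpha)/\Gamma(3+\alpha)$, so that after simplification
\begin{align*}
m \;=\; \Gamma(2+\alpha)\,\Gamma(3+\alpha)\left(\frac{\Gamma(k+1)}{\Gamma(k+3+\alpha)}\right)^{\!2}.
\end{align*}
This exact evaluation — in particular, making sure every $\Gamma$-factor cancels correctly — is the main computational step.

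It then remains to lower bound the two factors. For the $k$-dependent factor, write $\Gamma(k+3+\alpha)/\Gamma(k+1) = (k+1)(k+2)\cdot\Gamma(k+3+\alpha)/\Gamma(k+3)$, and bound $\Gamma(k+3+\alpha)/\Gamma(k+3)\le (k+2)^{\alpha}$ using log-convexity of $\Gamma$ (writing $k+3+\alpha=(1+\alpha)(k+3)+(-\alpha)(k+2)$ as a convex combination); this gives $\Gamma(k+3+\alpha)/\Gamma(k+1)\le (k+1)(k+2)(k+2)^\alpha\le (k+2)^{2+\alpha}$, hence $\big(\Gamma(k+1)/\Gamma(k+3+\alpha)\big)^2\ge (k+2)^{-(2(\alpha+1)+2)}$. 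The delicate point here is that a naive estimate using only monotonicity of $\Gamma$ would give $\Gamma(k+3+\alpha)/\Gamma(k+1)\le (k+1)(k+2)\le (k+2)^2$, losing the factor $(k+2)^{\alpha}$ and thus being too weak (since $\alpha<0$); the log-convexity refinement is what matches the target exponent exactly.

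Finally, for the constant factor, $\Gamma(2+\alpha)\Gamma(3+\alpha) = (2+\alpha)\,\Gamma(2+\alpha)^2 \ge \Gamma(2+\alpha)^2 \ge \big(\min_{t>0}\Gamma(t)\big)^2 > e^{-2}$, using $2+\alpha>1$ and the standard fact that the global minimum of $\Gamma$ on $(0,\infty)$ is $\approx 0.8856 > 1/e$. Combining the two bounds yields $m\ge e^{-2}(k+2)^{-(2(\alpha+1)+2)}$, which is the claim. (The edge case $k=0$, where $\cP_{-1}=\{0\}$ and $m=1/(2+\alpha)$, is covered by the same formula.)
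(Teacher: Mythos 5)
Your proof is correct and follows essentially the same route as the paper's: both reduce the minimum to the ratio $\det M/\det M'$ of Cauchy (Gram/moment) determinants and evaluate it exactly, and your closed form $\Gamma(2+\alpha)\Gamma(3+\alpha)\bigl(\Gamma(k+1)/\Gamma(k+3+\alpha)\bigr)^2$ agrees with the paper's $(\alpha+2)\bigl(\prod_{j=1}^k \tfrac{j}{j+\alpha+1}\bigr)^2(k+\alpha+2)^{-2}$. The only difference is the final estimation, where you use log-convexity of $\Gamma$ together with $\min_{t>0}\Gamma(t)>1/e$ in place of the paper's integral-comparison bound on $\prod_{j=1}^k \tfrac{j}{j+\alpha+1}$; both deliver the stated $e^{-2}(k+2)^{-2(\alpha+1)-2}$.
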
								

\begin{proof}								
Rephrasing it equivalently as 
\begin{align*}
	\min_{s(\eta)\in\cP_{k-1}} \int_{0}^1 (s(\eta)\eta^{\frac{3+\alpha}{2}}-\eta^{\frac{1+\alpha}{2}})^2 d\eta,
\end{align*}
shows that this problem can be seen as a best $L_2$-approximation for $\eta^{\frac{1+\alpha}{2}}$ in the $k$-dimensional space spanned by $g_i = \eta^{i+\frac{1+\alpha}{2}},~i=1,\dots,k$ (accordingly, $g_0=\eta^{\frac{1+\alpha}{2}}$). By \cite[Equation (3), p. 16]{akhiezer1956theory}, we have
\begin{align*}
	\min_{s(\eta)\in\cP_{k-1}} \int_{0}^1 (s(\eta)\eta^{\frac{3+\alpha}{2}}-\eta^{\frac{1+\alpha}{2}})^2 d\eta,
 = \frac{\det G(g_0,g_1,\dots,g_n)}{\det G(g_1,\dots,g_n)}
\end{align*}
where $G(\cdot)$ is Gram matrix (whose entries are the inner products of its arguments). First, note that 
\begin{align*}
\inner{g_i,g_j}=\int_{0}^1 \eta^{i+\frac{1+\alpha}{2}}\eta^{j+\frac{1+\alpha}{2}} d\eta=\int_{0}^1 \eta^{i+j+1+\alpha}d\eta = \frac{1}{i+j+\alpha+2},~i,j=0,1,\dots,k
\end{align*}
Thus,
\begin{align*}
	G(g_1,\dots,g_k)_{i,j} &= \frac{1}{i+j+\alpha+2},\\
	G(g_0,\dots,g_k)_{i,j} &=  \frac{1}{i+j+\alpha}.
\end{align*}
It follows that both matrices can be expressed as a Cauchy matrices, that is 
\begin{align*}
	G(g_1,\dots,g_k)_{i,j} &= \frac{1}{x_i-y_j},\\
	G(g_0,\dots,g_k)_{i,j} &=  \frac{1}{u_i-v_j}.
\end{align*}
where $x_i=i+\alpha+1,y_j=-j-1,~i,j\in[k]$ and $u_i=i+\alpha,v_j=-j,~i,j\in[k+1]$. The determinant of Cauchy matrix $A$ defined by sequences $w_i,z_j$ one has
\begin{align*}
	\det A = \frac{\prod_{i=2}^{k}\prod_{j=1}^{i-1} (w_i-w_j)(z_j-z_i)}{\prod_{i=1}^k\prod_{j=1}^k(w_i-z_j)}
\end{align*}
Hence, 
\begin{align*}
	\frac{\det G(g_0,g_1,\dots,g_k)}{\det G(g_1,\dots,g_k)}&=\frac{   \frac{\prod_{i=2}^{k+1}\prod_{j=1}^{i-1} (u_i-u_j)(v_j-v_i)}{\prod_{i=1}^{k+1}\prod_{j=1}^{k+1}(u_i-v_j)} }{\frac{\prod_{i=2}^{k}\prod_{j=1}^{i-1} (x_i-x_j)(y_j-y_i)}{\prod_{i=1}^k\prod_{j=1}^k(x_i-y_j)}}\\
	&=\frac{   \frac{\prod_{i=2}^{k+1}\prod_{j=1}^{i-1} (i-j)(-j-(-i))}{\prod_{i=1}^{k+1}\prod_{j=1}^{k+1}(i+\alpha-(-j))} }{\frac{\prod_{i=2}^{k}\prod_{j=1}^{i-1} ((i+1)-(j+1))((-j-1)-(-i-1))}{\prod_{i=1}^k\prod_{j=1}^k((i+\alpha+1)-(-j-1))}}\\
	&=\frac{   \frac{\prod_{i=2}^{k+1}\prod_{j=1}^{i-1} (i-j)^2}{\prod_{i=1}^{k+1}\prod_{j=1}^{k+1}(i+j+\alpha)} }{\frac{\prod_{i=2}^{k}\prod_{j=1}^{i-1} (i-j)^2}{\prod_{i=1}^k\prod_{j=1}^k(i+j+\alpha+2)}}\\
	&=\frac{\prod_{i=2}^{k+1}\prod_{j=1}^{i-1} (i-j)^2}{\prod_{i=1}^{k+1}\prod_{j=1}^{k+1}(i+j+\alpha)} 
	\frac{\prod_{i=1}^k\prod_{j=1}^k(i+j+\alpha+2)}{\prod_{i=2}^{k}\prod_{j=1}^{i-1} (i-j)^2}\\
	&=\frac{\prod_{j=1}^{k} (k+1-j)^2 \prod_{i=1}^k\prod_{j=1}^k(i+j+\alpha+2)}{\prod_{i=1}^{k+1}\prod_{j=1}^{k+1}(i+j+\alpha)}  \\
	&=\frac{\prod_{j=1}^{k} (k+1-j)^2 \prod_{i=1}^k\prod_{j=1}^k((i+\alpha+1)+(j+1))}{\prod_{i=1}^{k+1}\prod_{j=1}^{k+1}((i+\alpha)+j)}  \\
	&=\frac{\prod_{j=1}^{k} (k+1-j)^2 \prod_{i=2}^{k+1}\prod_{j=2}^{k+1}((i+\alpha)+j)}{\prod_{i=1}^{k+1}\prod_{j=1}^{k+1}((i+\alpha)+j)}  \\
	&=\frac{\prod_{j=1}^{k} (k+1-j)^2 }{  \prod_{i=1}^{k+1}(i+\alpha+1)\prod_{j=2}^{k+1}(1+\alpha+j)}  \\ 
	&=\frac{\prod_{j=1}^{k} j^2 }{  \prod_{i=1}^{k+1}(i+\alpha+1)\prod_{j=2}^{k+1}(1+\alpha+j)}  \\
	&=\frac{(\alpha+2)\prod_{j=1}^{k} j^2 }{  \prod_{i=1}^{k+1}(i+\alpha+1)^2}  \\
	&=(\alpha+2)\circpar{\prod_{j=1}^k\frac{j}{j+\alpha+1}}^2\frac{1}{(k+\alpha+2)^2}
\end{align*}
To estimate the middle term, we apply arguments similar to the integral test for infinite series. First, note that,
\begin{align*}
	\prod_{j=1}^k\frac{j}{j+\alpha+1}=\exp\circpar{\sum_{i=1}^k \ln \frac{j}{j+\alpha+1}}.
\end{align*}
Now, since for any $\alpha\in(-1,0)$, it holds that $x\mapsto\ln\frac{x}{x+\alpha+1}$ is a monotone decreasing function (over $x\neq\alpha$), 
it holds that 
\begin{align*}
	\sum_{i=1}^k \ln \frac{j}{j+\alpha+1}&\ge\int_{1}^{k+1} \ln\frac{x}{x+\alpha+1}dx=\left( x\ln\frac{x}{\alpha + x+1} - (\alpha+1)\ln(\alpha + x+1) \middle)\right|_{1}^{k+1} 
\end{align*}
Hence,
\begin{align*}
	\prod_{j=1}^k\frac{j}{j+\alpha+1}&\ge 
	\exp\circpar{ (k+1)\ln\frac{k+1}{\alpha + (k+1)+1} - (\alpha+1)\ln(\alpha + (k+1)+1)
	- \ln\frac{1}{\alpha + 2} + (\alpha+1)\ln(\alpha + 2) }\\
	&=\circpar{\frac{k+1}{k+\alpha+2}}^{k+1}(k+\alpha+2)^{-(\alpha+1)}(\alpha+2)^{\alpha+2}\\
	&\ge\circpar{\frac{k+1}{k+\alpha+2}}^{k+1}(k+\alpha+2)^{-(\alpha+1)}\\
	&=\circpar{1 - \frac{\alpha+1}{k+\alpha+2}}^{k+1}(k+\alpha+2)^{-(\alpha+1)}\\
	&=\circpar{1 - \frac{1}{\frac{k+1}{\alpha+1}+1}}^{k+1}(k+\alpha+2)^{-(\alpha+1)}.
\end{align*}
Now, by the following standard inequality
\begin{align*}
	1-\frac{2}{x+1}&\ge \exp\circpar{\frac{-2}{x-1} },
\end{align*}
we get, 
\begin{align*}
	1 - \frac{1}{\frac{k+1}{\alpha+1}+1}&=1 - \frac{2}{(2\frac{k+1}{\alpha+1}+1)+1}\ge\exp\circpar{\frac{-2}{(2\frac{k+1}{\alpha+1}+1)-1} }
	=\exp\circpar{\frac{-1}{\frac{k+1}{\alpha+1}} }=\exp\circpar{\frac{-(\alpha+1)}{k+1} }
\end{align*}
therefore,
\begin{align*}
	(\alpha+2)\circpar{\prod_{j=1}^k\frac{j}{j+\alpha+1}}^2\frac{1}{(k+\alpha+2)^2}
&\ge(\alpha+2)\circpar{\exp\circpar{\frac{-(\alpha+1)}{k+1} }^{k+1}(k+\alpha+2)^{-(\alpha+1)}}^2\frac{1}{(k+\alpha+2)^2}
\\&=(\alpha+2)\exp\circpar{-2(\alpha+1) }(k+\alpha+2)^{-2(\alpha+1)-2}
\\&\ge(\alpha+2)\exp\circpar{-2(\alpha+1) }(k+2)^{-2(\alpha+1)-2}
\end{align*}
All in all, we get
\begin{align*}
	\min_{s(\eta)\in\cP_{k-1}} \int_{0}^1 \eta(s(\eta)\eta-1)^2 \eta^\alpha d\eta&\ge
	\frac{1}{e^{2} (k+2)^{2(\alpha+1)+2}}
\end{align*}																																					

\end{proof}

																\subsection{Technical Lemmas}

\begin{lemma}\label{lem:technical1} 
For any $u\ge1$, 
\begin{align*}
	u-\sqrt{u^2-1}=\frac{1-\sqrt\frac{u-1}{u+1}}{1+\sqrt\frac{u-1}{u+1}}.
\end{align*}
\end{lemma}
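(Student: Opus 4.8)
The plan is to prove this purely by algebraic manipulation of the right-hand side, rationalizing the nested square root. First I would note that for $u\ge 1$ both $u-1$ and $u+1$ are non-negative, so $\sqrt{(u-1)/(u+1)}$ is a well-defined real number in $[0,1)$, and likewise $\sqrt{u-1}$, $\sqrt{u+1}$, $\sqrt{u^2-1}$ are all real; this legitimizes every operation below.

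The key step is to rewrite the RHS by multiplying numerator and denominator by $\sqrt{u+1}$:
\begin{align*}
\frac{1-\sqrt{\tfrac{u-1}{u+1}}}{1+\sqrt{\tfrac{u-1}{u+1}}}
=\frac{\sqrt{u+1}-\sqrt{u-1}}{\sqrt{u+1}+\sqrt{u-1}}.
\end{align*}
Then I would multiply numerator and denominator by $\sqrt{u+1}-\sqrt{u-1}$ (which is positive for $u\ge1$, so this is harmless), obtaining
\begin{align*}
\frac{\sqrt{u+1}-\sqrt{u-1}}{\sqrt{u+1}+\sqrt{u-1}}
=\frac{\bigl(\sqrt{u+1}-\sqrt{u-1}\bigr)^2}{(u+1)-(u-1)}
=\frac{(u+1)-2\sqrt{(u+1)(u-1)}+(u-1)}{2}
=u-\sqrt{u^2-1},
\end{align*}
which is exactly the left-hand side. (An equivalent route is to set $t=\sqrt{(u-1)/(u+1)}$, solve $u=\tfrac{1+t^2}{1-t^2}$, deduce $\sqrt{u^2-1}=\tfrac{2t}{1-t^2}$, and simplify $u-\sqrt{u^2-1}=\tfrac{(1-t)^2}{1-t^2}=\tfrac{1-t}{1+t}$; either presentation works.)

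There is no real obstacle here — the statement is an elementary identity and the only thing to be mildly careful about is justifying that the quantities under the radicals are non-negative and that the factor $\sqrt{u+1}-\sqrt{u-1}$ we cancel is nonzero, both of which follow immediately from $u\ge 1$. I would therefore keep the write-up to the two displayed lines above plus a one-sentence remark on well-definedness.
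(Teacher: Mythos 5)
Your proof is correct and follows essentially the same route as the paper's: both arguments rationalize the fraction to reach $\bigl(\sqrt{u+1}-\sqrt{u-1}\bigr)^2/2$ and then expand the square to obtain $u-\sqrt{u^2-1}$ (the paper multiplies top and bottom by $1-\sqrt{(u-1)/(u+1)}$ and then by $u+1$, whereas you multiply by $\sqrt{u+1}$ and then by $\sqrt{u+1}-\sqrt{u-1}$, but these are the same computation in a different order). No gaps; the write-up is fine as is.
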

\begin{proof}
We have,
\begin{align*}
	\frac{1-\sqrt\frac{u-1}{u+1}}{1+\sqrt\frac{u-1}{u+1}}&=	\frac{\circpar{1-\sqrt\frac{u-1}{u+1}}^2}{1-\frac{u-1}{u+1}}
	=	\frac{(u+1)\circpar{1-\sqrt\frac{u-1}{u+1}}^2}{u+1-(u-1)	}
	=	\frac{\circpar{\sqrt{u+1}-\sqrt{u-1}}^2}{2}\\
	&=	\frac{u+1 -2\sqrt{(u+1)(u-1)} +(u-1) }{2}
	=	u-\sqrt{u^2-1}
\end{align*}

\end{proof}

%
%

\begin{lemma} \label{lem:technical3}
For any $u>1$,
\begin{align*}
		\int_{-1}^1\frac{\sgn{(\sin(k\arccos(\eta)))}}{u-\eta}d\eta&\ge \circpar{\frac{1-\sqrt\frac{u-1}{u+1}}{1+\sqrt\frac{u-1}{u+1}}}^k.
\end{align*}
\end{lemma}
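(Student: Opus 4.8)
The idea is to turn the integral into a trigonometric one and then read off a lower bound from the Fourier expansion of $\eta\mapsto(u-\eta)^{-1}$ on $[-1,1]$. First I would substitute $\eta=\cos\theta$ with $\theta\in[0,\pi]$, so that $\arccos\eta=\theta$, $d\eta=-\sin\theta\,d\theta$, and the limits reverse:
\begin{align*}
	\int_{-1}^1\frac{\sgn(\sin(k\arccos\eta))}{u-\eta}\,d\eta=\int_0^\pi\frac{\sgn(\sin(k\theta))\,\sin\theta}{u-\cos\theta}\,d\theta .
\end{align*}
Put $r:=u-\sqrt{u^2-1}$; for $u>1$ one checks $r\in(0,1)$ and $r+r^{-1}=2u$, hence $1-2r\cos\theta+r^2=2r(u-\cos\theta)$. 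Taking imaginary parts of the geometric series $\sum_{j\ge0}(re^{i\theta})^j=(1-re^{i\theta})^{-1}$ and simplifying with this identity gives
\begin{align*}
	\frac{\sin\theta}{u-\cos\theta}=2\sum_{j=1}^\infty r^j\sin(j\theta),
\end{align*}
the series converging uniformly on $[0,\pi]$ since it is dominated by $\sum_j r^j<\infty$.

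Next I would integrate term by term (justified by that uniform convergence) to obtain
\begin{align*}
	\int_0^\pi\frac{\sgn(\sin(k\theta))\,\sin\theta}{u-\cos\theta}\,d\theta=2\sum_{j=1}^\infty r^j c_j,\qquad c_j:=\int_0^\pi\sgn(\sin(k\theta))\sin(j\theta)\,d\theta,
\end{align*}
and then evaluate the $c_j$. This can be done either by invoking the classical square–wave Fourier series $\sgn(\sin x)=\tfrac{4}{\pi}\sum_{\ell\ \text{odd}}\sin(\ell x)/\ell$ with $x=k\theta$ and matching against the orthogonal system $\{\sin(j\theta)\}_{j\ge1}$ on $L^2(0,\pi)$; or directly, by splitting $[0,\pi]$ into the intervals $[m\pi/k,(m+1)\pi/k]$ on which the sign equals $(-1)^m$, writing the resulting sum as $\tfrac1j\,\mathrm{Re}\big[(1-e^{ij\pi/k})\sum_{m=0}^{k-1}(-e^{ij\pi/k})^m\big]$, and noting that $\tfrac{1-e^{i\gamma}}{1+e^{i\gamma}}$ is purely imaginary so that everything cancels unless $e^{ij\pi/k}=-1$, i.e.\ unless $j=\ell k$ with $\ell$ odd. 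Either way one finds $c_j=2/\ell$ when $j=\ell k$ for a positive odd integer $\ell$, and $c_j=0$ otherwise.

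Substituting these coefficients yields
\begin{align*}
	\int_{-1}^1\frac{\sgn(\sin(k\arccos\eta))}{u-\eta}\,d\eta=4\sum_{\ell\ \text{odd}}\frac{r^{\ell k}}{\ell}\ \ge\ 4r^k\ \ge\ r^k,
\end{align*}
where the first inequality just keeps the $\ell=1$ term of a series of positive terms. Finally \lemref{lem:technical1} identifies $r^k=(u-\sqrt{u^2-1})^k$ with $\Big(\tfrac{1-\sqrt{(u-1)/(u+1)}}{1+\sqrt{(u-1)/(u+1)}}\Big)^k$, which is exactly the claimed bound. The steps that need real care are the justification of term-by-term integration and the bookkeeping in the coefficient computation; conceptually, everything reduces to the two standard Fourier identities above, after which the bound is obtained simply by discarding all but the leading positive term, so there is no serious obstacle.
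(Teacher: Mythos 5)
Your proof is correct, but it takes a genuinely different (and more self-contained) route than the paper. The paper's proof simply cites a closed-form identity from Akhiezer (Section F.31),
\begin{align*}
	\int_{-1}^1\frac{\sgn(\sin(k\arccos\eta))}{u-\eta}\,d\eta=2\ln\frac{(u+\sqrt{u^2-1})^k+1}{(u+\sqrt{u^2-1})^k-1},
\end{align*}
and then lower-bounds the right-hand side via the elementary inequality $\ln\frac{x+1}{x-1}\ge 1/x$ (proved by a monotonicity argument for $\gamma(x)=\ln\frac{x+1}{x-1}-\frac1x$), before finishing with \lemref{lem:technical1} exactly as you do. Your Fourier computation in fact re-derives that identity from scratch: with $r=u-\sqrt{u^2-1}$ your series $4\sum_{\ell\ \text{odd}}r^{\ell k}/\ell$ is precisely the expansion $2\ln\frac{1+r^k}{1-r^k}=2\,\mathrm{artanh}$-series of the paper's closed form, so the two computations agree term for term. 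What your approach buys is that the Akhiezer citation becomes unnecessary and the logarithmic inequality is replaced by the trivial step of discarding all but the leading term of a positive series (yielding the slightly stronger constant $4r^k$; the paper's route gives $2r^k$). What it costs is the bookkeeping you flag yourself: the Poisson-kernel expansion of $\sin\theta/(u-\cos\theta)$, the justification of term-by-term integration (which is fine, by uniform convergence of $\sum_j r^j\sin(j\theta)$ against the bounded square wave), and the computation of the coefficients $c_j$, all of which check out. Either route is acceptable; yours makes the lemma self-contained at the price of a longer calculation.
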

\begin{proof}
First, note that the function 
\begin{align*}
	\gamma(x)&\coloneqq \ln\frac{x+1}{x-1} -\frac1x	
\end{align*}
takes non-negative for any $x>1$, as
\begin{align*}
	\gamma'(x) &=  \frac{x-1}{x+1}\frac{ x-1 - (x+1) }{(x-1)^2} + \frac{1}{x^2}
	=  \frac{x-1}{x+1}\frac{ -2 }{(x-1)^2} + \frac{1}{x^2}\\
	&\le  \frac{ -2 }{(x-1)^2} + \frac{1}{x^2}
	\le	\frac{-1}{(x-1)^2}<0
\end{align*}
and $\lim_{x\to\infty} \gamma(x)=0$. Therefore, by using identity (see Section F.31. in \cite{akhiezer1956theory}), we get 
\begin{align*}
	\int_{-1}^1\frac{\sgn{(\sin(k\arccos(\eta)))}}{u-\eta}d\eta
	&=2\ln\frac{ (u+\sqrt{u^2-1})^k + 1}{ (u+\sqrt{u^2-1})^k -1}\\
	&\ge (u-\sqrt{u^2-1})^k=\circpar{\frac{1-\sqrt\frac{u-1}{u+1}}{1+\sqrt\frac{u-1}{u+1}}}^k,
\end{align*}
where the last equality is due to \lemref{lem:technical1}. 
\end{proof}

\begin{lemma}\label{lem:technical2tag} 
Let $L>\mu>0$, $c>0$ and $\alpha\ge0$. Then
\begin{align*}
	\epsilon&\ge	c\circpar{\frac{\sqrt\frac{L+\alpha}{\mu+\alpha}-1}{\sqrt\frac{L+\alpha}{\mu+\alpha}+1}}^k \implies
	k\ge\frac{1}{2}\circpar{\sqrt{\frac{L+\alpha}{\mu+\alpha}-1}} (\ln(c) + \ln(1/\epsilon ) )
\end{align*}
\end{lemma}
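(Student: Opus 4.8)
The plan is to pass to logarithms and then bound the contraction rate. Write $q \coloneqq \frac{L+\alpha}{\mu+\alpha}$ and $\rho \coloneqq \frac{\sqrt{q}-1}{\sqrt{q}+1}$, so that the hypothesis reads $\epsilon \ge c\rho^k$ and the desired conclusion reads $k \ge \tfrac12\sqrt{q-1}\,N$ with $N \coloneqq \ln c + \ln(1/\epsilon)$. First I would record that $L>\mu>0$ and $\alpha\ge 0$ force $\mu+\alpha>0$ and $q>1$, hence $\rho\in(0,1)$ and $\sqrt{q-1}>0$, so every quantity below is well defined and $\ln\rho<0$.

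For the reduction, from $\epsilon\ge c\rho^k$ I get $\rho^k\le \epsilon/c$; taking logarithms and dividing by the negative number $\ln\rho$ yields $k \ge \frac{\ln(c/\epsilon)}{\ln(1/\rho)} = \frac{N}{\ln(1/\rho)}$. Here I would split on the sign of $N$: if $N\le 0$ (equivalently $\epsilon\ge c$) the target $\tfrac12\sqrt{q-1}\,N$ is non-positive and $k\ge \tfrac12\sqrt{q-1}\,N$ holds trivially; if $N>0$ it suffices to prove the rate estimate $\ln(1/\rho)\le \frac{2}{\sqrt{q-1}}$, since then $\frac{N}{\ln(1/\rho)}\ge \tfrac12\sqrt{q-1}\,N$. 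Forgetting this case split would be a genuine gap, because when $\epsilon\ge c$ the bound $k\ge N/\ln(1/\rho)$ is itself vacuous.

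The only substantive step is the inequality $\ln(1/\rho)\le \frac{2}{\sqrt{q-1}}$. I would substitute $y\coloneqq 1/\sqrt{q}\in(0,1)$, under which $\ln(1/\rho)=\ln\frac{\sqrt{q}+1}{\sqrt{q}-1}=\ln\frac{1+y}{1-y}$ and $\frac{2}{\sqrt{q-1}}=\frac{2y}{\sqrt{1-y^2}}$, so the claim becomes $\tfrac12\ln\frac{1+y}{1-y}\le \frac{y}{\sqrt{1-y^2}}$ on $[0,1)$. Both sides vanish at $y=0$, so it is enough to compare derivatives: the left-hand side has derivative $\frac{1}{1-y^2}$, the right-hand side has derivative $\frac{1}{(1-y^2)^{3/2}}$, and $(1-y^2)^{3/2}\le 1-y^2$ on $[0,1)$ gives $\frac{1}{(1-y^2)^{3/2}}\ge \frac{1}{1-y^2}$, so the right-hand side grows at least as fast. (Equivalently, one can compare Maclaurin coefficients using $\binom{2k}{k}4^{-k}\ge \frac{1}{2k+1}$, which follows by an easy induction on $k$.) Substituting back into $k\ge N/\ln(1/\rho)$ gives $k\ge \tfrac12\sqrt{q-1}\,N = \tfrac12\sqrt{\frac{L+\alpha}{\mu+\alpha}-1}\,\bigl(\ln c+\ln(1/\epsilon)\bigr)$, which is the assertion.

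I expect no serious obstacle beyond bookkeeping: the substitution $y=1/\sqrt q$ and the elementary derivative comparison are routine, and the main thing to be careful about is the logical case distinction on the sign of $\ln c + \ln(1/\epsilon)$ together with the correct (sign-flipping) handling of the logarithm of $\rho<1$.
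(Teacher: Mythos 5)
Your proof is correct and follows essentially the same route as the paper's: both reduce the claim to the key inequality $\ln\frac{\sqrt{q}+1}{\sqrt{q}-1}\le \frac{2}{\sqrt{q-1}}$ (the paper writes it as $\frac{\sqrt{x}-1}{\sqrt{x}+1}\ge e^{-2/\sqrt{x-1}}$ and proves it by showing $\delta(x)=\ln\frac{\sqrt{x}-1}{\sqrt{x}+1}+\frac{2}{\sqrt{x-1}}$ is decreasing with limit $0$ at infinity, which is the same monotonicity-plus-endpoint argument as your derivative comparison after the substitution $y=1/\sqrt{q}$). The only cosmetic difference is that the paper stays in exponential form before taking logarithms, so it never divides by $\ln\rho$ and hence needs no case split on the sign of $\ln c+\ln(1/\epsilon)$; your case split correctly patches the gap your route would otherwise have.
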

\begin{proof}
Note that the function
\begin{align*}
	\delta(x) = \ln\frac{\sqrt x -1}{\sqrt x +1} + \frac{2}{\sqrt{x-1} }
\end{align*}
takes non-negative values for $x>1$, as 
\begin{align*}
	\delta'(x) &= \frac{\sqrt x+1}{\sqrt x-1} \frac{ 0.5 x^{-\nicefrac{1}{2}} (\sqrt x+1) - 0.5 x^{-\nicefrac{1}{2}}(\sqrt x-1) }{(\sqrt x+1)^2} - \frac{1}{(x-1)\sqrt{x-1}}\\
	&= \frac{ 1}{(x-1)\sqrt x} - \frac{1}{(x-1)\sqrt{x-1}}<0\\
\end{align*}
and $\lim_{x\to\infty}\delta(x) =0$. Thus, we obtained the following inequality 
\begin{align*}
	\frac{\sqrt{x}-1}{\sqrt x+1}
	&\ge\exp\circpar{\frac{-2}{\sqrt{x-1}}},~x>1,
\end{align*}
yields
\begin{align*}
c\circpar{\frac{\sqrt\frac{L+\alpha}{\mu+\alpha}-1}{\sqrt\frac{L+\alpha}{\mu+\alpha}+1}}^k\ge c\exp\circpar{\frac{-2k}{\sqrt{\frac{L+\alpha}{\mu+\alpha}-1}}}.
\end{align*}
Hence, 
\begin{align*}
	&\ln\epsilon \ge\ln(c) + \frac{-2k}{\sqrt{\frac{L+\alpha}{\mu+\alpha}-1}}\\
	\implies &\frac{2k}{\sqrt{\frac{L+\alpha}{\mu+\alpha}-1}}\ge \ln(c) + \ln(1/\epsilon )\\
	\implies &k\ge \frac{1}{2}\circpar{\sqrt{\frac{L+\alpha}{\mu+\alpha}-1}} (\ln(c) + \ln(1/\epsilon ) )
\end{align*}

\end{proof}

	\end{document}